\documentclass{article}
\usepackage[utf8]{inputenc}
\usepackage{hyperref}
\hypersetup{
    colorlinks=true,
    linkcolor=blue,
    citecolor=blue,      
    urlcolor=blue,
}

\usepackage{amsfonts}
\usepackage{amsrefs}
\usepackage{mathrsfs,amsmath}
\usepackage{amssymb}
\usepackage{titlesec}
\usepackage{graphicx}
\usepackage{wrapfig}
\usepackage{geometry}
\usepackage{bm}
\usepackage{amsthm}
\usepackage{commath}
\usepackage{color}
\newtheorem{theorem}{Theorem}[section]
\newtheorem*{remark}{Remark}
\newtheorem{lemma}[theorem]{Lemma}
\newtheorem{cor}[theorem]{Corollary}
\newtheorem*{conjecture}{Conjecture}

\title{Decomposing Centrally Symmetric Convex Polyhedral Surfaces into Parallelograms}
\author{\Large Zili Wang \footnote{Zili Wang is supported by the Natural Science Foundation of Guangdong Province (Grant
No. 2023A1515010658)} \\School of Science\\ Sun Yat-sen University\\ wangzli6@mail.sysu.edu.cn \and \Large Cong Wu \\ Mathematics Department \\ Shenzhen Middle School \\ cwu@shenzhong.net}

\begin{document}
\maketitle

\begin{abstract}
  Let $\mathcal{M}_{2N}(\delta_1, \delta_2,\dots, \delta_N)$ be the moduli space of centrally symmetric convex polyhedral surfaces with $2N$ labeled vertices and prescribed cone-deficits $\delta_1$, $\delta_2$, $\dots$, $\delta_N$. We show that $\mathcal{M}_{2N}(\delta_1, \delta_2,\dots, \delta_N)$ has the structure of a real hyperbolic manifold of dimension $2N-3$. When $N=4$ and $5$, we show that every surface in $\mathcal{M}_{2N}(\delta_1, \delta_2,\dots, \delta_N)$ can be decomposed into at most $2\binom{2N-2}{2}$ parallelograms, and the decomposition is invariant under the antipodal map. Using the edge-lengths of these parallelograms as coordinates, we show that the moduli space of centrally symmetric polyhedral surfaces with $8$ unlabeled vertices and cone-deficits $\frac{\pi}{2}$ is isometric to the quotient of a real hyperbolic regular ideal $5$-simplex by the dihedral group $D_6$. 
\end{abstract}


\section{Introducton}

The surface of a polyhedron is a Euclidean cone sphere. It has a metric that is locally Euclidean except at every vertex $v_i$, where it is locally isometric to a Euclidean cone with angle $\theta_i$. The value $\delta_i=2\pi-\theta_i$ is called the \textbf{cone-deficit} of $v_i$. It follows from the Gauss–Bonnet Theorem that the sum of all cone-deficits equals $4\pi$.

A polyhedron is convex if $0<\delta_i<2\pi$ for all $i$. In \cite{Th1}, Thurston considered the moduli space of convex polyhedra with prescribed cone-deficits. By decomposing the surfaces of polyhedra into triangles, he built coordinates to describe these polyhedra, and showed that their moduli space admits a Hermitian metric, with respect to which it is a complex hyperbolic manifold. In addition, under certain conditions, the metric completion of this moduli space is an orbifold. This result provides a geometric way to interpret the monodromy groups of hypergeometric functions investigated by Picard \cite{Pic1}\cite{Pic2}, Mostow and Deligne \cite{DM1}\cite{DM2}\cite{Mos}. In addition, this result has been applied in counting and enumerating specific structures on the sphere, such as counting convex tilings \cite{ES2} and enumerating fullerenes \cite{ES1}. Thurston's idea in building coordinates and the Hermitian form is also used to study the moduli space of convex polygons \cite{BG}.

Inspired by this work, we study the space of centrally symmetric convex polyhedra. A polyhedron is \textbf{centrally symmetric} if it is isometric to a polyhedron symmetric about the origin, which means that a point $x$ lies on its surface if and only if its antipodal point $-x$ does. The map that sends $x$ to $-x$ for all $x$ on the surface is called the \textbf{antipodal map}. Thus, a centrally symmetric polyhedron has an even number $2N$ of vertices, and its cone-deficits are described by $N$ positive numbers that sum up to $2\pi$. 

The main results of this paper can be summarized as follows. In Section 2, we show that the moduli space of centrally symmetric convex polyhedra with prescribed cone-deficits has the structure of a real hyperbolic manifold. The way we compute the signature of the quadratic form is very similar to the way Thurston computed it for the Hermitian form in \cite{Th1}. In addition, it is known that every centrally symmetric convex polygon can be decomposed into finitely many parallelograms. This leads us to conjecture that a similar result holds for every centrally symmetric convex polyhedral surface. In a previous work \cite{Wang}, we showed that the surface of every centrally symmetric octahedron has a parallelogram decomposition that is invariant under the antipodal map. In Section 3, we establish a method that relates a parallelogram decomposition of a polyhedral surface to a structure called ``loop arrangement'' on the sphere. Using this method, we generalize this result to centrally symmetric convex polyhedra with $8$ and $10$ vertices in Section 4 and 5. In these cases, it turns out that the edge-lengths of the parallelograms in the decomposition can be used as coordinates to describe polyhedra. These coordinates are different from Thurston's coordinates in \cite{Th1}, but may sometimes provide a more concrete geometric description of the moduli spaces when the polyhedra are centrally symmetric. For example, in Section 4, we show that the moduli space of centrally symmetric polyhedra with $8$ vertices whose cone-deficits are $\frac{\pi}{2}$ is isometric to the quotient of a real hyperbolic regular ideal $5$-simplex by a group isomorphic to $D_6$. 

\section{Space of Centrally Symmetric Convex Polyhedra}

Let $\delta_1$, $\delta_2$, $\dots$, $\delta_N$ be $N$ positive numbers that sum up to $2\pi$. Let $\mathcal{C}_{2N}(\delta_1, \delta_2, \dots, \delta_N)$ be the space of centrally symmetric convex polyhedral surfaces with $N$ pairs of antipodal vertices labeled by $i^+$ and $i^-$ (where $1\leq i\leq N$) with prescribed cone-deficit $\delta_i$. In this space, two surfaces are considered equivalent if and only if they differ by a Euclidean isometry that preserves the labels of all the vertices. 

\begin{theorem} \label{thm1}
    The space $\mathcal{C}_{2N}(\delta_1, \delta_2, \dots, \delta_N)$ has the structure of a complex manifold of dimension $N-1$.
\end{theorem}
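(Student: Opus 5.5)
Following Thurston's construction in \cite{Th1}, we will build local charts by developing the surface into the plane. Fix a surface $P\in\mathcal{C}_{2N}(\delta_1,\dots,\delta_N)$ and choose a geodesic triangulation $T$ of $P$ with vertex set exactly the $2N$ cone points that is invariant under the antipodal map $\iota$. One way is to take the Delaunay decomposition with respect to the cone points. It is canonical, hence $\iota$-invariant, and its cells can be subdivided $\iota$-equivariantly since $\iota$ is fixed-point free on the surface. Cutting $P$ along a tree of edges of $T$ gives a simply connected flat polygon, which we develop into $\mathbb{C}$. Each edge $e$ of $T$ then receives a vector $z_e\in\mathbb{C}$, well-defined up to a global rotation (the holonomy rotations are determined by the fixed deficits). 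The vectors satisfy linear relations: each triangle closes up, and the vectors around each cone point are related by the rotations $e^{i\theta}$ coming from the cone angles. As in \cite{Th1}, the space of all such solutions is a complex vector space $V_T$ of dimension $2N-2$. This count is Thurston's: the moduli space of convex surfaces with $n$ cone points has dimension $n-3$ projectively, so $n-2$ linearly.

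Next we impose central symmetry. The involution $\iota$ acts on the edges of $T$, and it acts on $V_T$ by the linear map $\iota^*$ given by $z_e\mapsto -z_{\iota(e)}$, after a suitable normalization of the developing map; the sign reflects that $\iota$ reverses the chosen direction convention. The centrally symmetric surfaces near $P$ are exactly the $\iota^*$-fixed points, which form a complex linear subspace $V_T^{\iota}$. We will compute its dimension with a Lefschetz/character argument. The trace of $\iota^*$ on the solution space can be computed from the cell structure modulo $\iota$. Concretely, $T/\iota$ is a triangulation of the projective plane with $N$ cone points, and the invariant solutions correspond to developing vectors on $\mathbb{RP}^2$ with holonomy twisted by the orientation character. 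Counting $E-F$ relations on $T/\iota$, with $V-E+F=1$, $V=N$ and $3F=2E$, gives $E=3N-3$ edge unknowns and $F=2N-2$ triangle relations. Each closing relation is a single complex equation, and we expect these to be independent. That leaves complex dimension $N-1$.

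Finally, we will show these charts give a complex manifold structure. The map $P'\mapsto (z_e)$ is injective near $P$, since the edge vectors determine the triangles and hence the surface up to isometry. It is also open onto a neighborhood in $V_T^{\iota}$, because small perturbations keep all triangles nondegenerate and positively oriented, and Alexandrov's theorem realizes the resulting convex symmetric metric as a unique convex polyhedron. That polyhedron is centrally symmetric by uniqueness, since the isometry $\iota$ of the metric extends to an isometry of the polyhedron. Transition maps between two triangulations are obtained by flips, which are linear in the $z_e$, so they are holomorphic. The dimension count is unaffected by whether we quotient by the global rotation: as in Thurston's setup, equivalence under isometry corresponds to rotation, and we keep linear coordinates, matching his dimension convention. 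So the charts glue to an $(N-1)$-dimensional complex manifold.

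The main obstacle is the independence of the $2N-2$ closing relations on $T/\iota$, i.e.\ showing the invariant subspace has dimension exactly $N-1$ rather than more. We will handle it as Thurston does, by induction on $N$: collapse a pair of antipodal cone points along an $\iota$-invariant pair of edges, merging them with neighbors, and check that the dimension drops by exactly one complex parameter. The base case $N=2$ (wait, deficits summing to $2\pi$ forces $N\ge 2$; also $N=3$, the octahedron) is checked directly.
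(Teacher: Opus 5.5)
\paragraph{The gap.} The step that fails is your claim that $\iota^*$ is complex linear, $z_e\mapsto -z_{\iota(e)}$, so that $V_T^{\iota}$ is a complex subspace.

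\begin{itemize}
\item Radial projection conjugates $\iota$ to the antipodal map of $S^2$, which has degree $-1$. You use this yourself when you identify $T/\iota$ with $\mathbb{RP}^2$. So $\iota$ is an orientation-reversing isometry of the flat cone metric.
\item In orientation-preserving developing charts such a map reads $z\mapsto \alpha\bar z+\beta$. The induced map on edge vectors is therefore conjugate-linear, $z_e\mapsto \lambda\,\overline{z_{\iota(e)}}$ with $|\lambda|=1$.
\item No normalization of the developing map, which only changes it by a rotation, can make this map linear. The rule $v\mapsto -v$ is the extrinsic relation between antipodal faces in $\mathbb{R}^3$. Once each face is identified with $\mathbb{C}$ using its own outward normal ($n$ versus $-n$), it becomes conjugate-linear.
\end{itemize}

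Consequently:
\begin{itemize}
\item $V_T^{\iota}$ is a totally real subspace with $V_T=V_T^{\iota}\oplus iV_T^{\iota}$.
\item Your charts, and the flip transition maps restricted to them, are only real-linear. Nothing in the argument produces holomorphic transition maps.
\item The ``single complex equation per triangle'' on $T/\iota$ is not meaningful. The orientation-character twist acts by conjugation, so the coefficient system is real of rank $2$. The count $E-F=N-1$ is valid only in the form $2(E-F)=2N-2$ real dimensions.
\end{itemize}

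Two consistency checks:
\begin{itemize}
\item If $V_T^{\iota}$ were a complex subspace, it would be rotation invariant. Quotienting by the global $U(1)$, as the isometry equivalence requires, would leave real dimension $2N-3$. So your remark that the rotation ``does not affect the count'' is inconsistent with your own picture.
\item Thurston's Hermitian area form would restrict to a Hermitian form on $V_T^{\iota}$, whose real signature has even positive index. That contradicts the signature $(1,2N-3)$ obtained in Theorem \ref{thm2}.
\end{itemize}

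\paragraph{What survives, and comparison with the paper.} With the correction, your framework cleanly proves what is actually true: $\mathcal{C}_{2N}(\delta_1,\dots,\delta_N)$ is a real manifold of dimension $2N-2$ with real-linear transition maps.
\begin{itemize}
\item The real-form splitting gives $\dim_{\mathbb{R}}V_T^{\iota}=\dim_{\mathbb{C}}V_T=2N-2$ directly from Thurston's count. Your ``main obstacle'' (independence of the relations) and the proposed induction therefore disappear.
\item Rotating the development by $e^{i\phi}$ changes $\lambda$ to $e^{2i\phi}\lambda$. Fixing $\lambda$ uses up the rotation freedom except for $\pm1$, which acts freely.
\item Your Alexandrov-uniqueness argument for openness is a genuine improvement over the paper, which only asserts that varying the coordinates yields nearby surfaces. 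To finish it, note that the extended isometry is an involution fixing the centroid and has no fixed points on the surface. A plane reflection or half-turn through an interior point would fix points of the surface, so it must be the point reflection.
\end{itemize}

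The paper takes a different, more hands-on route. It takes a shortest $\iota$-invariant cycle through all but one antipodal pair, cuts along it into two antipodal disks, unfolds one disk, and uses $N-1$ consecutive edge vectors as coordinates. But it makes the same slip. Its own angle bookkeeping at $1^\pm$ (clockwise turns $\pi-\theta$ and $\delta_1+\theta-\pi$) gives $Z_{1^-2^-}=e^{-i\delta_1}\frac{Z_{(N-1)^+1^-}}{|Z_{(N-1)^+1^-}|}\,\overline{Z_{1^+2^+}}$. This is not a complex-linear combination of the coordinates, and with its normalization not even a real-linear one.

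So neither argument supports a complex structure with complex-linear transitions. Both, once corrected, give the real $(2N-2)$-dimensional structure, which is all that Theorem \ref{thm2} uses.
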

\begin{proof}
    Let $\mathscr{S}$ be a surface in $\mathcal{C}_{2N}(\delta_1, \delta_2, \dots, \delta_N)$. First, we will associate $\mathscr{S}$ with $N-1$ complex numbers.

    Consider the set of directed cycles on $\mathscr{S}$, each of which: 1) is invariant under the antipodal map, 2) consists of directed edges on $\mathscr{S}$ (line segments connecting the vertices of $\mathscr{S}$), and 3) visits all vertices of $\mathscr{S}$ except $i^+$ and $i^-$ for exactly one $i\in\{1,2,\dots,N\}$. Figure \ref{fig1} shows two cycles satisfying the conditions above, where $\mathscr{S}$ is the surface of a cube in $\mathcal{C}_8(\frac{\pi}{2}, \frac{\pi}{2}, \frac{\pi}{2}, \frac{\pi}{2})$.

    \begin{figure}[h]
    \centering    
    \includegraphics[width=0.6\textwidth]{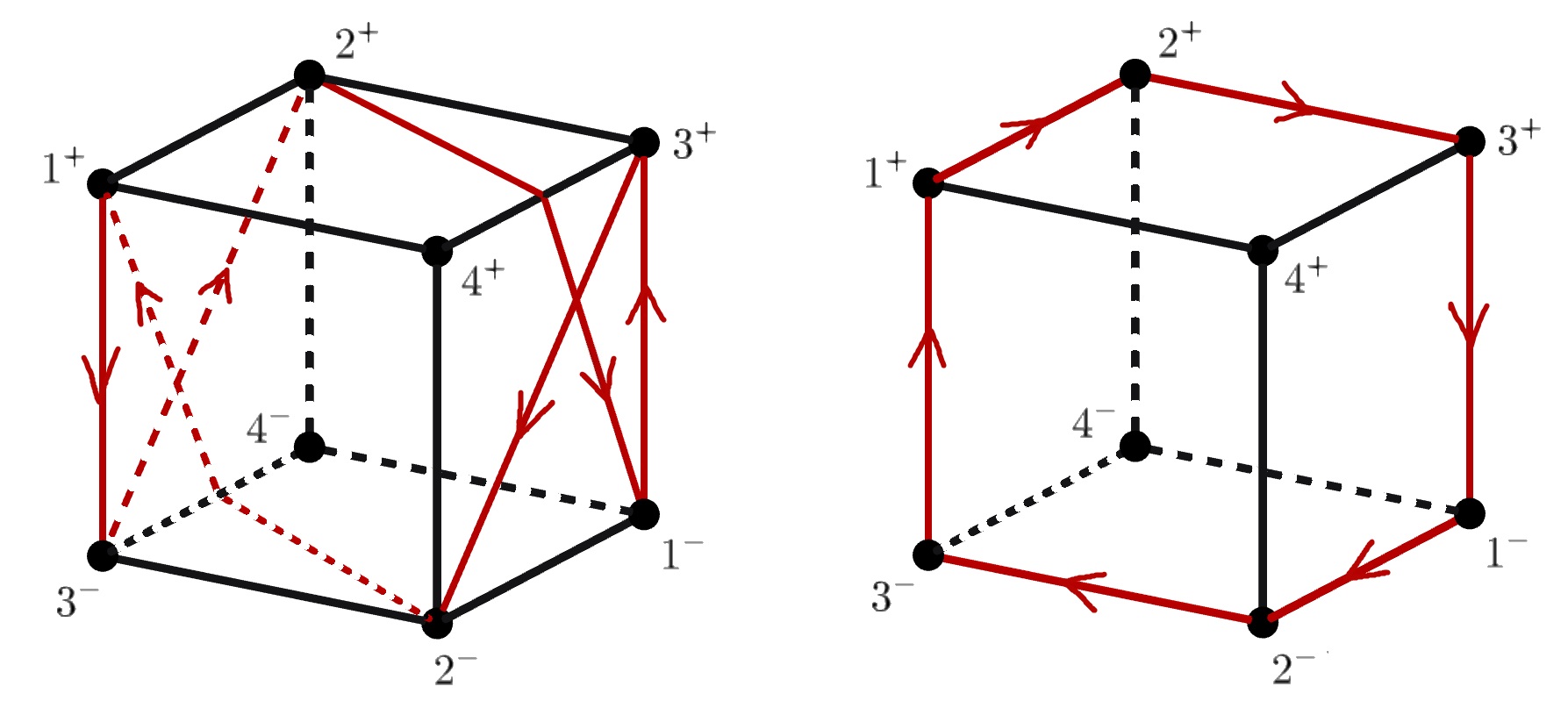}
    \caption{Two directed cycles on the surface of a cube. The second one is strictly shorter.}
    \label{fig1}
    \end{figure}
    
     In the set above, let $c$ be a cycle of the shortest length. We claim that the edges in $c$ do not cross on $\mathscr{S}$. Suppose that $u_1v_1$ and $u_2v_2$ are two directed edges that cross on $\mathscr{S}$, Without loss of generality, we assume that $c$ visits the four vertices in the order $u_1$, $v_1$, $u_2$, $v_2$. Then we can replace $u_1v_1$ and $u_2v_2$ with the directed edges $u_1u_2$ and $v_1v_2$, reverse the direction of the path from $v_1$ to $u_2$, and perform the same trick on their antipodal images. This will make $c$ a strictly shorter cycle satisfying the three conditions above, a contradiction. For example, in Figure \ref{fig1}, if we perform this trick on the cycle in the first picture, we obtain the cycle in the second picture, which is strictly shorter.

    Without loss of generality, suppose that $c$ visits the vertices of $\mathscr{S}$ in the order $1^+$, $2^+$, $\dots$, $(N-1)^+$, $1^-$, $2^-$, $\dots$, $(N-1)^-$, but not $N^+$ or $N^-$. For convenience, we denote the directed edges in $c$ by their starting points and endpoints, such as $1^+2^+$, $2^+3^+$, etc. We cut $\mathscr{S}$ along $c$ into two surfaces that are topologically equivalent to disks and are antipodal to each other. Let $\mathscr{S}^+$ be the surface containing $N^+$. We can cut $\mathscr{S}^+$ open by cutting along an edge inside $\mathscr{S}^+$ joining $N^+$ and a vertex of $c$. Without loss of generality, we assume that this line segment joins $(N-1)^-$ and $N^+$, and we denote it by $(N-1)^-N^+$. In this way, we obtain a polygon $P_{\mathscr{S}^+}$ that can be unfolded on the complex plane $\mathbb{C}$. In Figure \ref{fig2}, $\mathscr{S}$ is the surface of a cube with $N=4$, $\mathscr{S}^+$ is obtained by cutting $\mathscr{S}$ along the cycle from the previous example, and $P_{\mathscr{S}^+}$ is obtained after cutting $\mathscr{S}^+$ along $4^+3^-$ and unfolding it on the plane.

    \begin{figure}[h]
    \centering    
    \includegraphics[width=0.7\textwidth]{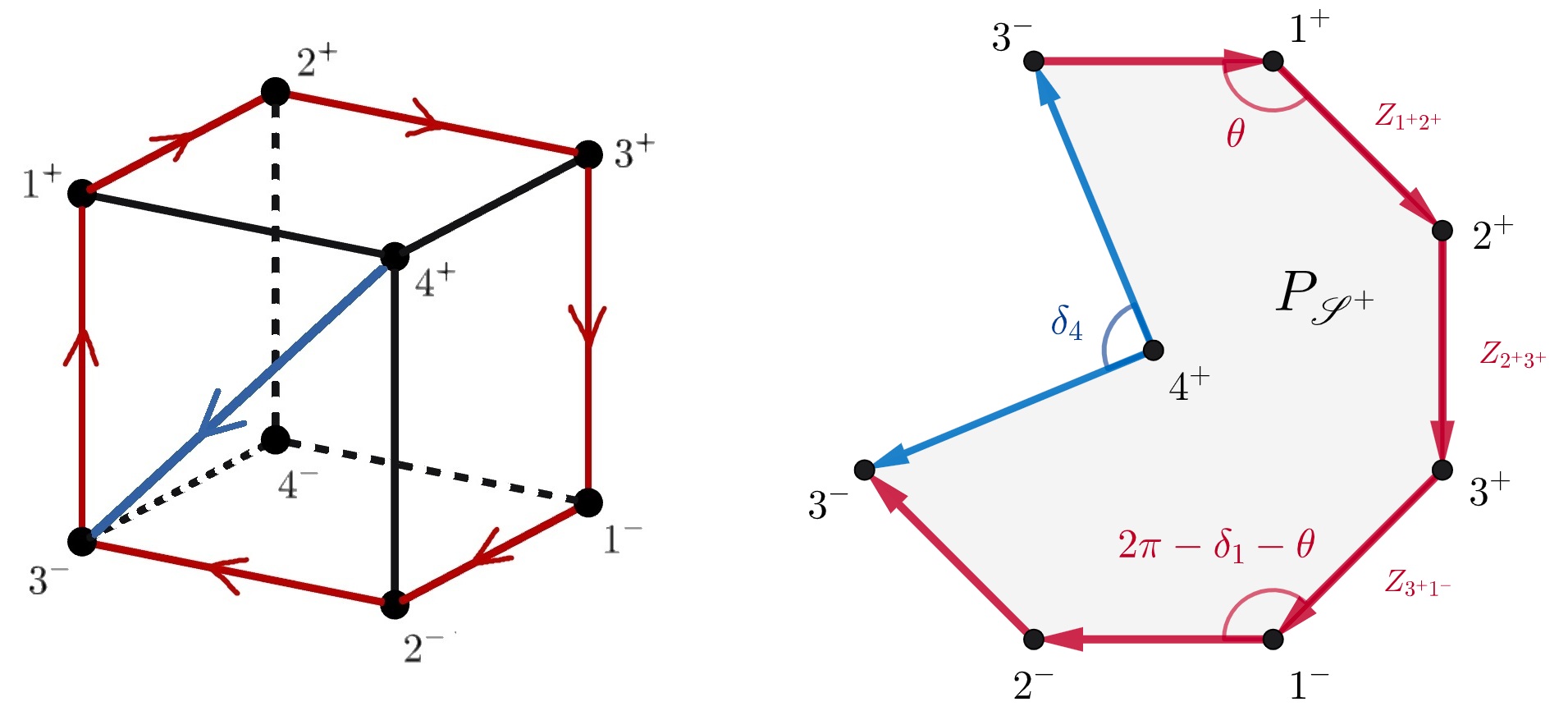}
    \caption{The surface $\mathscr{S}^+$ is cut open and unfolded to a planar polygon $P_{\mathscr{S}^+}$.}
    \label{fig2}
    \end{figure}

    Every directed edge of $P_{\mathscr{S}^+}$ determines a vector from its starting point to its endpoint. We think of these vectors as complex numbers, and denote them by $Z_{(N-1)^-1^+}$, $Z_{1^+2^+}$, $\dots$, $Z_{(N-2)^+(N-1)^+}$, $Z_{(N-1)^+1^-}$, $Z_{1^-2^-}$, $\dots$, $Z_{(N-2)^-(N-1)^-}$. Among these complex numbers, we take $N-1$ of them ranging from $Z_{1^+2^+}$ to $Z_{(N-1)^+1^-}$. By choosing an unfolding such that $Z_{(N-1)^-1^+}$ is a positive real number and $P_{\mathscr{S}^+}$ is on its right-hand side, these complex numbers are uniquely determined. In this way, we associate $\mathscr{S}$ to a vector $(Z_{1^+2^+}, Z_{2^+3^+}, \dots, Z_{(N-1)^+1^-})\in \mathbb{C}^{N-1}$. In Figure \ref{fig2}, the numbers $Z_{1^+2^+}$, $Z_{2^+3^+}$ and $Z_{3^+1^-}$ are labeled in the second picture.


    Conversely, the complex vector $(Z_{1^+2^+}$, $Z_{2^+3^+}$, $\dots$, $Z_{(N-1)^+1^-})$ also determines $P_{\mathscr{S}^+}$ and thus $\mathscr{S}$. First, the number $Z_{1^+2^+}$ determines the clockwise angle from $Z_{(N-1)^-1^+}$ to $Z_{1^+2^+}$, which we assume is $\pi-\theta$ (see Figure \ref{fig2} for an example). Then the clockwise angle from $Z_{(N-1)^+1^-}$ to $Z_{1^-2^-}$ equals $\pi-(2\pi-\delta_1-\theta)=\delta_1+\theta-\pi$, so $Z_{1^-2^-}$ is uniquely determined by $Z_{(N-1)^+1^-}$ and $\delta_1$. Therefore, given $Z_{1^+2^+}$, $Z_{2^+3^+}$, $\dots$, $Z_{(N-1)^+1^-}$ and the prescribed cone-deficits, we can similarly determine the remaining complex numbers up to $Z_{(N-2)^-(N-1)^-}$. The last two edges of $P_{\mathscr{S}^+}$ come from the cutting slit $(N-1)^-N^+$, so they have an equal modulus and make an angle of $\delta_N$. They are uniquely determined because the polygon $P_{\mathscr{S}^+}$ needs to close up. In particular, each of them is a complex linear combination of $Z_{1^+2^+}$, $Z_{2^+3^+}$, $\dots$, and $Z_{(N-1)^+1^-}$. 
    
    By varying the complex numbers $Z_{1^+2^+}$, $Z_{2^+3^+}$, $\dots$, $Z_{(N-1)^+1^-}$ locally, we can obtain polyhedral surfaces near $\mathscr{S}$, so we have constructed a local coordinate chart $\phi:\mathscr{U}\rightarrow \mathbb{C}^{N-1}$, where $\mathscr{U}$ is a neighborhood of $\mathscr{S}$ in $\mathcal{C}_{2N}(\delta_1, \delta_2, \dots, \delta_N)$. To show that $\mathcal{C}_{2N}(\delta_1, \delta_2, \dots, \delta_N)$ is a complex manifold, it remains to show that when two coordinate charts overlap, the change-of-coordinate map is a complex linear transformation.

    It would be helpful to think of $\phi$ as the restriction of a \textbf{developing map} $D: \widetilde{\mathscr{S}}\rightarrow\mathbb{C}$, where $\widetilde{\mathscr{S}}$ is the universal cover of $\mathscr{S}_0$, the complement of the vertices of $\mathscr{S}$. Let $p:\widetilde{\mathscr{S}}\rightarrow\mathscr{S}_0$ be the {covering map}. One can view $\widetilde{\mathscr{S}}$ as the space of homotopic paths in $\mathscr{S}_0$ with a common basepoint. If $e_1$ and $e_2$ are two directed edges (not including the endpoints) of $\widetilde{\mathscr{S}}$ such that $p(e_1)=p(e_1)$, then $D(e_1)$ and $D(e_2)$ can be computed by the analytic continuation of $\mathscr{U}$ along paths with different homotopy classes in $\mathscr{S}_0$. It turns out that $D(e_1)$ and $D(e_2)$ differ by a rotation by a constant angle that is determined by the prescribed cone-deficits. One may refer to the discussion on the developing map and the holonomy in \cite{Th2} (Section 3.5) or \cite{Th1} (the paragraph after Proposition 3.1) for more details.   


    Let $\phi':\mathscr{U}'\rightarrow \mathbb{C}^{N-1}$ be another chart such that $\mathscr{S}\in\mathscr{U}\cap\mathscr{U}'$. Suppose that $\phi'$ is constructed by cutting $\mathscr{S}$ along a different directed cycle $c'$ and a new cutting slit. Up to a composition of translations and rotations, we may assume that $\phi'$ is also a restriction of $D$. Let $e$ be a directed edge of $\widetilde{\mathscr{S}}$ so that $p(e)$ is in $c'$ or is the new cutting slit. If $p(e)$ is in $c$ or is $(N-1)^-N^+$, we have seen that $D(e)$ is a complex linear combination of $Z_{1^+2^+}$, $Z_{2^+3^+}$, $\dots$, $Z_{(N-1)^+1^-}$. If $p(e)$ is any other edge of $\mathscr{S}$, then one can find edges $e_1$, $e_2$, $\dots$, $e_n$ in $\widetilde{\mathscr{S}}$ that form a cycle in $\widetilde{\mathscr{S}}$ with $e$, and $p(e_i)$ (or $-p(e_i)$) is $(N-1)^-N^+$ or in $c$. Then $D(e)$ is again a complex linear combination of $Z_{1^+2^+}$, $Z_{2^+3^+}$, $\dots$, $Z_{(N-1)^+1^-}$. Thus, the transformation from $Z_{1^+2^+}$, $Z_{2^+3^+}$, $\dots$, $Z_{(N-1)^+1^-}$ to the coordinates given by $\phi'$ is complex linear, and is valid in $\mathscr{U}\cap\mathscr{U}'$. This shows that $\mathcal{C}_{2N}(\delta_1, \delta_2, \dots, \delta_N)$ is a complex manifold of dimension $N-1$.
    \end{proof}

    In the proof of Theorem \ref{thm1}, we use $N-1$ directed edges of $\mathscr{S}$ to construct a local coordinate chart containing $\mathscr{S}$. We have seen that the coordinates depend on which $N-1$ directed edges of $\mathscr{S}$ we choose (also on how we develop them onto the plane). The choice of directed edges does not have to be restricted to the way in the proof of Theorem \ref{thm1}. In fact, if $e_1$, $e_2$, $\dots$, $e_{N-1}$ are directed edges of $\widetilde{\mathscr{S}}$ such that the complex linear transformation from $(Z_{1^+2^+}$, $Z_{2^+3^+}$, $\dots$, $Z_{(N-1)^+1^-})$ to $(D(e_1), D(e_2), \dots, D(e_{N-1}))$ is invertible, then we can also use $(D(e_1), D(e_2), \dots, D(e_{N-1}))$ as coordinates to describe surfaces near $\mathscr{S}$. In this case, we call $(D(e_1), D(e_2), \dots, D(e_{N-1}))$ a \textbf{local frame} near $\mathcal{S}$ on $\mathcal{C}_{2N}(\delta_1, \delta_2, \dots, \delta_N)$. For example, in Figure \ref{fig3}, we draw three different sets of directed edges on the surface of a cube. By choosing a way to develop each set of directed edges onto the plane, we can get different local frames near this surface on $\mathcal{C}_{8}(\frac{\pi}{2}, \frac{\pi}{2}, \frac{\pi}{2}, \frac{\pi}{2})$. We will return to local frames when we discuss parallelogram decompositions of polyhedral surfaces in Section 4.

    \begin{figure}[h]
    \centering    \includegraphics[width=0.8\textwidth]{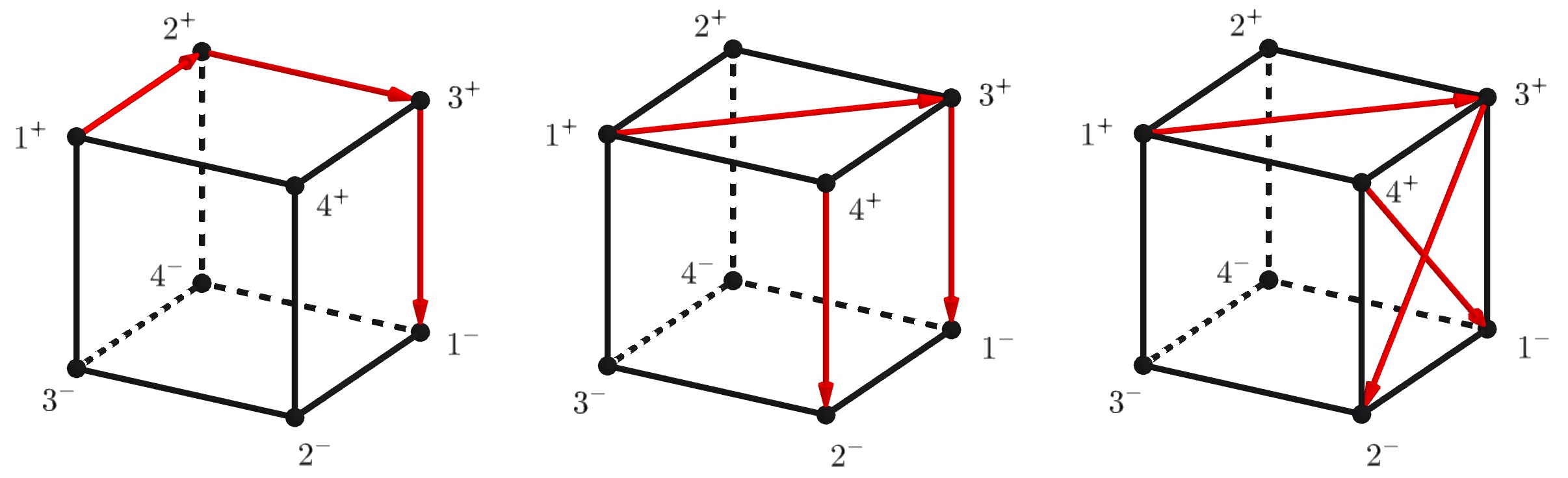}
    \caption{Directed edges on the surface of a cube that give rise to different local frames.}
    \label{fig3}
    \end{figure}


    Next, let $\mathcal{M}_{2N}(\delta_1, \delta_2, \dots, \delta_N)$ be the moduli space of $\mathcal{C}_{2N}(\delta_1, \delta_2, \dots, \delta_N)$, in which two polyhedral surfaces are equivalent if and only if they differ by a similarity preserving the vertex-labels. Alternatively, we can regard $\mathcal{M}_{2N}(\delta_1, \delta_2, \dots, \delta_N)$ as the space of polyhedral surfaces in $\mathcal{C}_{2N}(\delta_1, \delta_2, \dots, \delta_N)$ whose surface areas are equal to $1$. 

    In \cite{Th1}, it was shown that the moduli space of convex polyhedra with $N$ prescribed cone-deficits has the structure of a complex hyperbolic manifold of dimension $N-3$. The main idea of proof is to show that the surface area is a Hermitian form of signature $(1,n-3)$ with respect to the coordinates arising from decomposing the surfaces into triangles. The argument is based on induction on the number of vertices, where multiple vertices of a surface may ``collide'' to produce a surface with fewer vertices. One may refer to \cite{Th1} and \cite{Sch} for more details of this operation. 
    
    Next, we are going to use collisions of vertices to prove the following result:


\begin{theorem} \label{thm2}
    When $N\geq 3$, the space $\mathcal{M}_{2N}(\delta_1, \delta_2, \dots, \delta_N)$ has the structure of a real hyperbolic manifold of dimension $2N-3$.
\end{theorem}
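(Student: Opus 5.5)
Viewing $\mathcal{C}_{2N}(\delta_1,\dots,\delta_N)$ as a real manifold of dimension $2N-2$ via Theorem \ref{thm1}, the plan is to show that the surface area is a real quadratic form $A$ in the local coordinates of signature $(1,2N-3)$. Here the real coordinates are the real and imaginary parts of $Z_{1^+2^+},\dots,Z_{(N-1)^+1^-}$.

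First I will show that area is a real quadratic form, well defined across charts. In the chart built from the cut polygon $P_{\mathscr{S}^+}$, the area of $\mathscr{S}$ is twice the area of $P_{\mathscr{S}^+}$. By the shoelace formula this equals a sum of terms $\operatorname{Im}(\overline{Z_a}Z_b)$ over edge vectors of the polygon. Every edge vector is a complex linear combination of the coordinates, as shown in the proof of Theorem \ref{thm1}. Hence $A$ is a real quadratic form; it is in fact a Hermitian form in the complex coordinates, viewed as a real form. Since the transition maps are complex linear, the form transforms correctly, so $A$ is a globally defined quadratic form on the tangent spaces. Rescaling acts by real dilation, so $\mathcal{M}_{2N}$ is identified with the locus $\{A=1\}$ inside the cone $\{A>0\}$. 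If $A$ has signature $(1,2N-3)$, then $\{A=1\}$ modulo $\pm1$ is locally the hyperboloid model of $\mathbb{H}^{2N-3}$. Because the transition maps are linear and preserve $A$, this gives a real hyperbolic structure. One caveat: the complex rotation freedom was already killed in $\mathcal{C}$ by fixing $Z_{(N-1)^-1^+}>0$, so the dimension is indeed $2N-2-1=2N-3$.

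Second, I will compute the signature by induction on $N$ through collisions. For the base case $N=3$ the surfaces are centrally symmetric octahedra, and $\mathcal{C}_6$ has real dimension $4$. I will write the area explicitly in a convenient frame and show that it has signature $(1,3)$. This can be done by exhibiting one positive direction (the given surface itself) and a $3$-dimensional negative-definite subspace. Alternatively, I can argue as Thurston does: the $\mathbb{C}^*$-orbit of a point spans a real $2$-plane, on which $A$ is positive definite, since $A$ restricted to $\lambda Z$ equals $|\lambda|^2A(Z)$.

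This suggests an important correction. Because $A$ is Hermitian in the complex coordinates, its real signature is twice the complex one, so it is always even. That is incompatible with $(1,2N-3)$. So the hyperbolic structure must instead come from quotienting the positive locus by $\mathbb{C}^*$ (similarities, including rotation), or else from a genuinely real quadratic form. Indeed, the moduli space, with real dimension $2N-3$, is $\mathcal{C}$ (real dimension $2N-2$) modulo only positive scaling. This suggests that the real coordinates are the complex coordinates in a fixed rotation frame, and that the area is a real but non-Hermitian quadratic form: the normalization $Z_{(N-1)^-1^+}\in\mathbb{R}_{>0}$ breaks the $\mathbb{C}$-linearity of the form. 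I will therefore compute $A$ as a real form on $\mathbb{R}^{2N-2}$ and prove its signature is $(1,2N-3)$.

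The inductive step, which is the main obstacle, works as follows. Let the pair $N^\pm$ approach the pair $(N-1)^\pm$ along a segment, merging into a pair of cone points of deficit $\delta_{N-1}+\delta_N$. This is allowed when $\delta_{N-1}+\delta_N<2\pi$, and such a pair always exists for $N\ge 3$ since the deficits sum to $2\pi$. Near the collision, choose coordinates where the short slit vector $Z$ joining $(N-1)^+$ to $N^+$ is one complex coordinate, contributing two real coordinates. The area then decomposes as
\[A = A' + Q(Z),\]
where $A'$ is the area of the collided surface in $\mathcal{C}_{2N-2}$ and $Q$ is the area contribution of the small symmetric pair of triangles. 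After completing the square, $Q$ should be negative definite on $\mathbb{R}^2$. This follows from a Thurston-type computation: for cone angles less than $2\pi$, the removed cone piece of a two-vertex merge has area $-c|Z|^2$ with $c>0$, and this is an actual $|Z|^2$ term, not a real-part term. By induction the signature is then $(1,2N-5)+(0,2)=(1,2N-3)$. The delicate points are verifying negativity of $c$ and making sure the cross terms are handled by a linear change of frame.
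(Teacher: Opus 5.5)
The signature computation matches the paper's. You collide a non-antipodal pair together with its antipodal pair, note that the area of $\mathscr{S}$ is the area of the collided surface $\mathscr{S}'\in\mathcal{C}_{2N-2}$ minus four congruent triangles of area $k|Z|^2$ (so there are no cross terms to complete), and add two negative squares to the inductive form. Two small remarks. First, the cut-and-glue works at every $\mathscr{S}$ and for any length of the segment, so you need not restrict to surfaces near the collision locus. Second, the paper imports the octahedral base case from earlier work. You can make the base case trivial by starting at $N=2$: there the surfaces are doubled parallelograms with fixed corner angle $\alpha$, and the area $2ab\sin\alpha$ has signature $(1,1)$.

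The genuine gap is the linear structure. This is what turns ``$A$ has signature $(1,2N-3)$ in each chart'' into a hyperbolic structure on $\{A=1\}$: the transition maps must be linear, not merely area-preserving, and area-preservation is all the paper's proof states.

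\begin{itemize}
\item Your parity observation is right. It shows that the $\mathbb{C}$-linearity claimed in the proof of Theorem~\ref{thm1} cannot hold for those coordinates.
\item Your repair is false. The normalization $Z_{(N-1)^-1^+}>0$ is a rotation by a surface-dependent angle, which is not real-linear either, so $A$ is not a quadratic form in those coordinates at all.
\item Concretely, take $N=2$ and rectangles with sides $a,b$, normalized so that $Z_{1^-1^+}>0$. The chart coordinate $w=Z_{1^+1^-}$ has modulus equal to the diagonal and argument $\pi\pm 2\beta$, where $\beta$ is the angle between the diagonal and a side. So the area $2ab=|w|^2\sin 2\beta$ equals $|w|\,|\operatorname{Im} w|$, which is not quadratic.
\end{itemize}

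The missing idea is that the antipodal map $\tau$ reverses orientation.

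\begin{itemize}
\item Rotate the developing map $D$ so that $D\circ\tilde\tau=\overline{D}+\mathrm{const}$ for a lift $\tilde\tau$. This is possible up to sign, since rotating $D$ by $\lambda$ multiplies the linear part of this anti-holonomy by $\lambda^2$.
\item Then antipodal edge vectors satisfy $Z_{\tau(e)}=\epsilon_e\overline{Z_e}$ with constant unit complex numbers $\epsilon_e$ determined by the deficits.
\item All edge vectors become real-linear in $2N-2$ real parameters, and chart changes are real-linear.
\item Your collision coordinates are linear as well, because the new cone point is a fixed complex affine combination of $1^+$ and $2^+$.
\end{itemize}

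This is also the correct version of your Hermitian intuition. Locally, $\mathcal{C}_{2N}$ is the real form $V_{\mathbb{R}}$ fixed by the antilinear involution that $\tau$ induces on Thurston's $\mathbb{C}^{2N-2}$ for $2N$ cone points. Thurston's area form $H$ satisfies $H(x+iy)=A(x)+A(y)$ for $x,y\in V_{\mathbb{R}}$. Hence its Hermitian signature $(1,2N-3)$ gives the real signature $(1,2N-3)$ of $A$ directly, without induction.
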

\begin{proof}
    The idea of proof is to express the surface area as a quadratic form of signature $(1,2N-3)$ on $\mathcal{C}_{2N}(\delta_1, \delta_2, \dots, \delta_N)$. However, the coordinates we use will be based on induction and different from those in Theorem \ref{thm1}.

    In the base case where $N=3$, $\mathcal{M}_{6}(\delta_1, \delta_2, \delta_3)$ is the moduli space of centrally symmetric octahedra. We showed in \cite{Wang} that the surface area is a quadratic form of signature $(1,3)$ with respect to the coordinates obtained by decomposing the surfaces into parallelograms. 

    When $N>3$, let $\mathscr{S}$ be a polyhedral surface in $\mathcal{C}_{2N}(\delta_1, \delta_2, \dots, \delta_N)$. We choose two non-antipodal vertices such that the sum of their cone-deficits is strictly less than $2\pi$. This is possible since $\sum\limits_{i=1}^N\delta_i=2\pi$. Without loss of generality, we assume that these two vertices are $1^+$ and $2^+$. Then we choose an edge connecting $1^+$ and $2^+$, cut $\mathscr{S}$ open along this edge, and glue a cone to the slit. This cone can be obtained by gluing two congruent triangles with base angles $\frac{\delta_1}{2}$ and $\frac{\delta_2}{2}$. In this way, we kill the cone-deficits at $1^+$ and $2^+$, and produce a new vertex $V^+$ with cone-deficit $\frac{\delta_1+\delta_2}{2}$. This operation is demonstrated in Figure \ref{fig4}. Simultaneously, We perform the same operation on the vertices $1^-$ and $2^-$. In this way, we construct a new polyhedral surface $\mathscr{S}'$ in $\mathcal{C}_{2N-2}(\delta_1+\delta_2, \delta_3,\dots, \delta_N)$. By induction hypothesis, there exists a local coordinate chart from an open neighborhood of  $\mathscr{S}'$ in $\mathcal{C}_{2N-2}(\delta_1+\delta_2, \delta_3,\dots, \delta_N)$ to $\mathbb{R}^{2N-4}$ with respect to which we can write the surface area function in the form $k_1x_1^2-k_2x_2^2-\dots-k_{2N-4}x_{2N-4}^2$. 

    \begin{figure}[h]
    \centering    \includegraphics[width=0.8\textwidth]{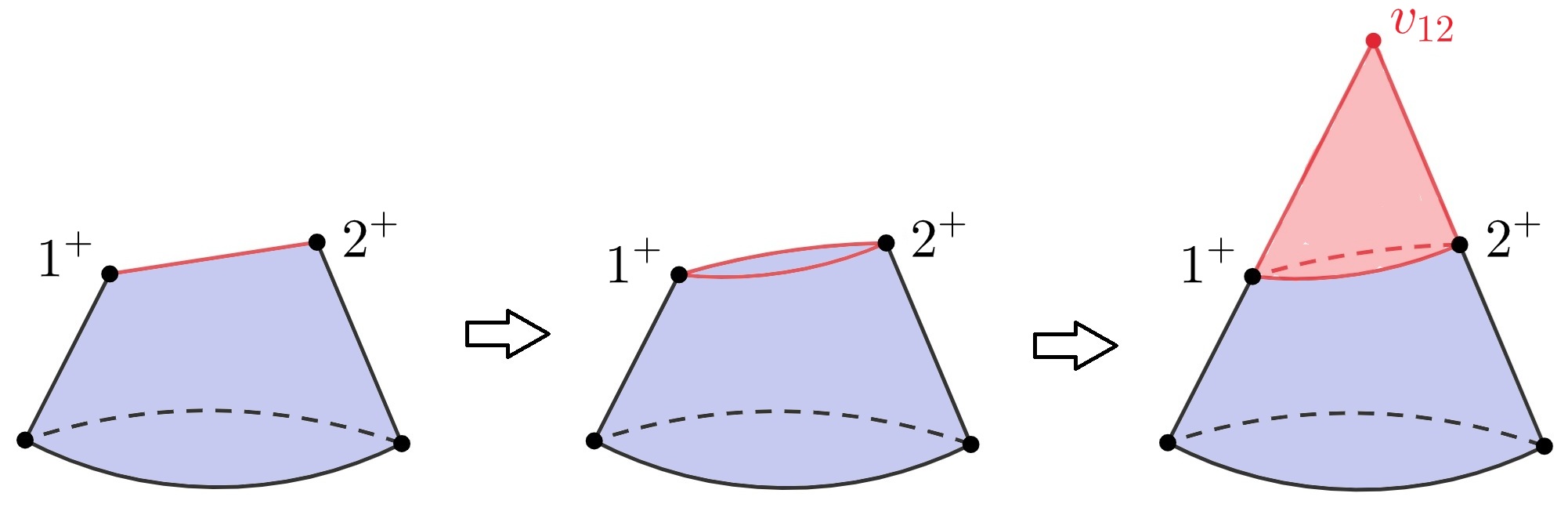}
    \caption{Vertices $1^+$ and $2^+$ ``collide'' to $v_{12}$, resulting in a surface with one fewer vertex.}
    \label{fig4}
    \end{figure}

    The surface area of $\mathscr{S}$ equals that of $\mathscr{S}'$ minus the areas of the four congruent triangles with base angles $\frac{\delta_1}{2}$ and $\frac{\delta_2}{2}$. To complete the induction, it suffices to show that the area of each triangle has the form $k(x_{2N-3}^2+x_{2N-2}^2)$, where $k>0$, and $x_1$, $x_2$, $\dots$, $x_{2N-4}$, $x_{2N-3}$, $x_{2N-2}$ form a set of real coordinates for surfaces near $\mathscr{S}$. 

    Fix a ray $r$ emanating from $v_{12}$ on $\mathscr{S}'$ as a reference. The surface $\mathscr{S}$ is uniquely determined by both the distance $d$ between $v_{12}$ and $1^+$, and the clockwise angle $\theta$ from the ray $r$ to the ray from $v_{12}$ to $1^+$. Then the area of each triangle is a multiple of $d^2$ by a constant determined by $\frac{\delta_1}{2}$ and $\frac{\delta_2}{2}$. Thus, we can take $x_{2N-3}=d\cos{\theta}$ and $x_{2N-2}=d\sin{\theta}$. The coordinates $x_1$, $x_2$, $\dots$, $x_{2N-4}$, $x_{2N-3}$, $x_{2N-2}$ remain valid for surfaces near $\mathscr{S}$ in $\mathcal{C}_{2N}(\delta_1, \delta_2, \dots, \delta_N)$. This completes the induction.    
    
    Therefore, surfaces near $\mathscr{S}$ in $\mathcal{C}_{2N}(\delta_1, \delta_2, \dots, \delta_N)$ with area $1$ correspond to vectors in $\mathbb{R}^{2N-2}$ of length $1$ with respect to a quadratic form of signature $(1, 2N-3)$. This establishes local coordinate charts from $\mathcal{M}_{2N}(\delta_1, \delta_2, \dots, \delta_N)$ to the real hyperbolic space $\mathbb{H}^{2N-3}$. Finally, any change-of-coordinate map on $\mathcal{C}_{2N}(\delta_1, \delta_2, \dots, \delta_N)$ keeps the surface area invariant. This shows that $\mathcal{M}_{2N}(\delta_1, \delta_2, \dots, \delta_N)$ is a real hyperbolic manifold of dimension $2N-3$.
\end{proof}

Let $\overline{\mathcal{M}_{2N}}(\delta_1, \delta_2, \dots, \delta_N)$ be the metric completion of $\mathcal{M}_{2N}(\delta_1, \delta_2, \dots, \delta_N)$ with respect to the real hyperbolic metric above. For example, the surface in the third picture of Figure \ref{fig4} is not in $\mathcal{M}_{2N}(\delta_1, \delta_2, \dots, \delta_N)$ but in  $\overline{\mathcal{M}_{2N}}(\delta_1, \delta_2, \dots, \delta_N)$ if it has area $1$. This is because one can construct a Cauchy sequence of surfaces in $\mathcal{M}_{2N}(\delta_1, \delta_2, \dots, \delta_N)$ converging to it by colliding two vertices and normalizing the surface area at the same time. We will return to this space in Section 4.

\section{Loop Arrangements}

In this section, we show that there is a natural correspondence between a parallelogram decomposition of a centrally symmetric polyhedral surface and a ``loop arrangement'' on the sphere $S^2$.

Suppose that we are given $2N$ distinct vertices on $S^2$ labeled by $1^+$, $1^-$, $2^+$, $2^-$, $\dots$, $N^+$ and $N^-$, where $i^+$ and $i^-$ are antipodal for $1\leq i\leq N$. By a \textbf{loop} we mean a great circle on $S^2$ that does not pass through any labeled vertices. A set of loops form a \textbf{loop arrangement} if any two of them are non-homotopic with respect to the labeled vertices. Since loops are great circles, every two of them intersect exactly twice at two antipodal points. We assume without loss of generality that no three loops are concurrent, otherwise we can perturb any one of the them. As an example, in Figure \ref{fig5}, we sketch the ``front view'' of a loop arrangement with $6$ loops on a sphere with $8$ labeled vertices. Each red curve represents a loop. We use black dots to represent the vertices $1^+$, $2^+$, $3^+$, and $4^+$. In addition, we use gray dots to represent the vertices $2^-$, $3^-$, and $4^-$ (even though they should be invisible from the actual front view) because it will be helpful for our demonstrations later. 

\begin{figure}[h]
    \centering    \includegraphics[width=0.24\textwidth]{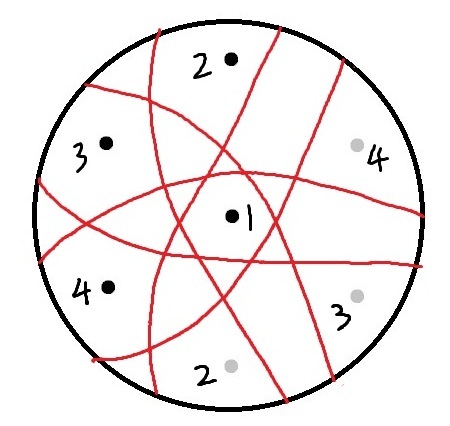}
    \caption{A loop arrangement with $6$ loops (red) on a sphere with $8$ labeled vertices.}
    \label{fig5}
\end{figure}

Let $\mathscr{A}$ be a loop arrangement on a sphere with $2N$ labeled vertices. Regarding it as a graph on the sphere, we can take its dual graph $\mathscr{D}^*$. Since no three loops are concurrent, the faces of $\mathscr{D}^*$ are all quadrilaterals. As we show next, $\mathscr{D}^*$ is closely related to the decomposition of polyhedral surfaces in $\mathcal{C}_{2N}(\delta_1, \delta_2, \dots, \delta_N)$ into parallelograms. More precisely, we will first turn each quadrilateral face into a parallelogram by specifying its side lengths and angles, and then check that we do obtain a polyhedral surface in $\mathcal{C}_{2N}(\delta_1, \delta_2, \dots, \delta_N)$. 

We first label all loops in $\mathscr{A}$. For example, if $\mathscr{A}$ is the loop arrangement in Figure \ref{fig5}, we can denote its $8$ loops by $l_a$, $l_b$, $l_c$, $l_d$, $l_e$, and $l_f$. For convenience, these loops are marked by $a$, $b$, $\dots$, $f$ in the first picture of Figure \ref{fig6}. In the second picture, we draw the dual graph $\mathscr{D}^*$ of $\mathscr{A}$ in blue.

\begin{figure}[h]
    \centering    \includegraphics[width=0.65\textwidth]{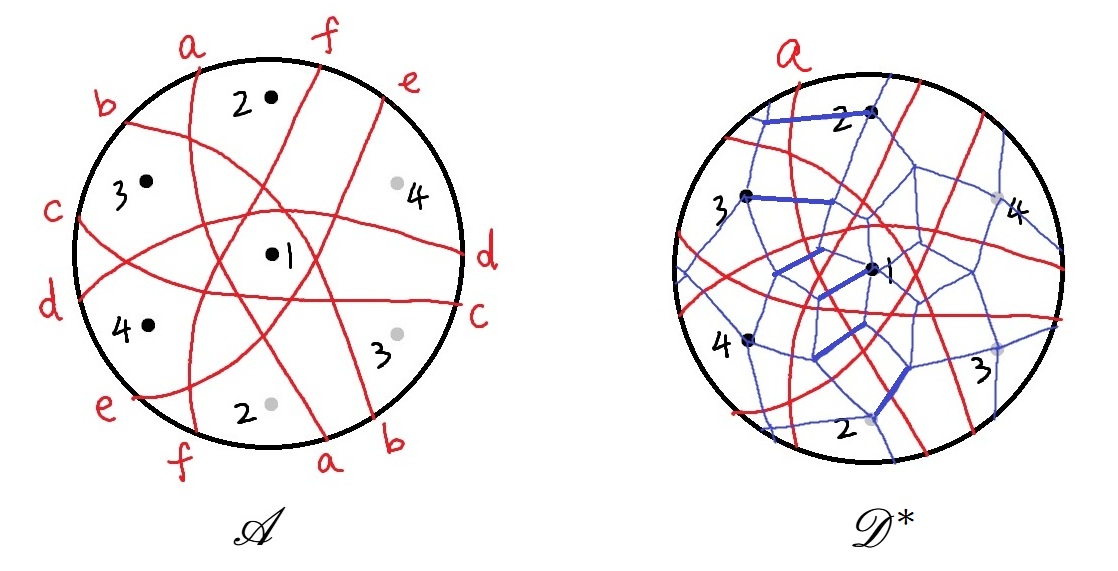}
    \caption{The loop arrangement $\mathscr{A}$ with its dual graph $\mathscr{D}^*$ (blue).}
    \label{fig6}
\end{figure}

 We say that an edge in $\mathscr{D}^*$ is \textbf{transverse to} the loop $l_i$ if it is dual to an arc of this loop. For each loop $l_i$, We assign a positive number $i$ to all edges in $\mathscr{D}^*$ transverse to $l_i$. These numbers will be the edge-lengths of the quadrilaterals. For example, in Figure \ref{fig6}, the thickened edges are all transverse to $l_a$ and assigned length $a$.

Next, we assign angles to each quadrilateral. Let $Q$ be a quadrilateral in $\mathscr{D}^*$. Consider two edges of $Q$ that meet at a vertex $v$ in $Q$. We are going to assign a value $\theta$ as the angle made by these two edges in $Q$. To compute $\theta$, let $l_i$ and $l_j$ be the two loops to which these two edges are transverse. Then $l_i$ and $l_j$ divide the sphere into $4$ lunes, one of which contains $v$ in its interior. We denote this lune by $\mathcal{L}$. For example, in the first picture of Figure \ref{fig7}, we sketch a quadrilateral $Q$ (blue), the angle $\theta$ we want to compute in $Q$, and the lune $\mathcal{L}$ (red) we consider to compute $\theta$.

\begin{figure}[h]
    \centering    \includegraphics[width=0.7\textwidth]{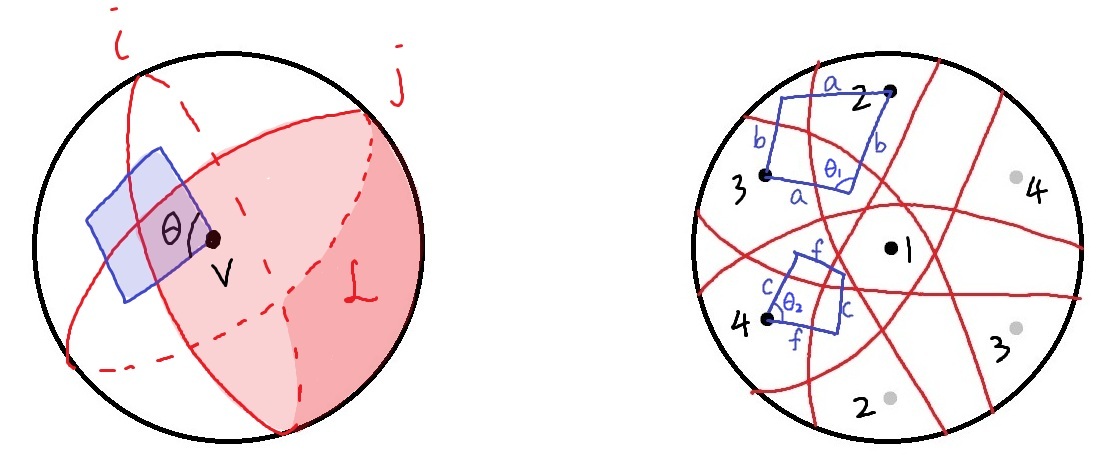}
    \caption{Assigning angles to quadrilaterals in $\mathscr{D}^*$.}
    \label{fig7}
\end{figure}

Consider all the labeled vertices enclosed by $\mathcal{L}$. We compute $\theta$ by first taking the sum of the cone-deficits associated to the enclosed labeled vertices, then dividing this angle sum by half, and finally taking its supplementary angle. That is,

\begin{equation}\label{eq1}
    \theta=\pi-\frac{1}{2}\sum_{n^+\text{ or } n^-\in\mathcal{L}} \delta_n
\end{equation}

For example, consider the two quadrilaterals in the second picture of Figure \ref{fig7}. For the top quadrilateral, we have $\theta_1=\pi-\frac{1}{2}\delta_1$, since $\mathcal{L}$ encloses $1^+$ only. For the bottom quadrilateral, we have $\theta_2=\pi-\frac{1}{2}(\delta_1+\delta_4)=\frac{1}{2}\delta_2+\frac{1}{2}\delta_3$.

By symmetry, the value of the opposite angle in $Q$ is also $\theta$. As the sum of all $\delta_i$'s equals $2\pi$, Equation \ref{eq1} implies that $\theta>0$. Let $\theta'$ be the value assigned to any one of the remaining two angles in $Q$, and let $\mathcal{L}'$ be the lune we consider to compute $\theta'$. Note that $\mathcal{L}\cup\mathcal{L}'$ is a hemisphere. Therefore, the sum of the cone-deficits of the labeled vertices enclosed in $\mathcal{L}$ and those enclosed in $\mathcal{L}'$ equals $2\pi$. Applying Equation \ref{eq1}, we get $\theta+\theta'=\pi$. Since $\theta, \theta'>0$, with the edge-lengths we assigned previously, $Q$ is now a parallelogram. 

Therefore, we regard all quadrilaterals in $\mathscr{D}^*$ as solid parallelograms. In this way, we obtain a polyhedral surface (Euclidean cone sphere) that is almost everywhere flat except possibly at the vertices of the parallelograms. These parallelograms form a \textbf{parallelogram decomposition} of the surface. By varying the edge-lengths of the parallelograms, we obtain different polyhedral surfaces. Our next goal is to show that these surfaces all belong to $\mathcal{C}_{2N}(\delta_1, \delta_2, \dots, \delta_N)$. That is:

\begin{theorem} \label{thm3}
    On a sphere $S^2$ with $2N$ labeled vertices, let $\mathscr{A}$ be a loop arrangement with $2N-2$ loops ($N\geq 3$). Let $\mathscr{D}^*$ be the dual graph of $\mathscr{A}$, and $\mathscr{S}$ be any surface arising from $\mathscr{D}^*$ by assigning edge-lengths and angles to the quadrilaterals in $\mathscr{D}^*$ in the way discussed above. Then $\mathscr{S}\in \mathcal{C}_{2N}(\delta_1, \delta_2, \dots, \delta_N)$.
\end{theorem}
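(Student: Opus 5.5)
We need to show four things about $\mathscr{S}$: it is a topological sphere; the antipodal map of $S^2$ induces an isometric involution of $\mathscr{S}$; the cone points are exactly $2N$ points identified with the labeled vertices, with the vertex in the face containing $n^\pm$ having deficit $\delta_n$ and every other vertex flat; and, by Alexandrov's theorem, $\mathscr{S}$ is then the surface of a convex polyhedron which is centrally symmetric.

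\textbf{Topology and symmetry.} Since $\mathscr{D}^*$ is the dual of a cellular decomposition of $S^2$, gluing the parallelograms along the edges of $\mathscr{D}^*$ yields a sphere. Its vertices correspond to the faces of $\mathscr{A}$, its edges to arcs of loops, and its parallelograms to crossing points. The antipodal map preserves $\mathscr{A}$, since the loops are great circles. It sends a face to a face and a lune to a lune, and it preserves the set of labeled vertices with their deficits. It therefore preserves edge-lengths and the angles given by Equation \ref{eq1}, so it induces an isometric fixed-point-free involution of $\mathscr{S}$.

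\textbf{Cone angles.} Fix a face $F$ of $\mathscr{A}$, bounded by arcs of loops $l_{i_1},\dots,l_{i_k}$ in cyclic order, with corners at the crossings $l_{i_j}\cap l_{i_{j+1}}$. The dual vertex $v_F$ receives one parallelogram angle per corner. At the corner between $l_{i_j}$ and $l_{i_{j+1}}$, the relevant lune $\mathcal{L}_j$ is the one of those two great circles that contains $F$. Since $\mathscr{S}$ is locally flat except at vertices, the cone angle at $v_F$ is
\[
\sum_{j=1}^k\Bigl(\pi-\tfrac12\sum_{n^\pm\in\mathcal{L}_j}\delta_n\Bigr).
\]
Let $H_j$ be the open hemisphere of $l_{i_j}$ containing $F$. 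Then $\mathcal{L}_j=H_j\cap H_{j+1}$, and $F=\bigcap_j H_j$ since $F$ is a convex spherical polygon. Write $\mu(X)=\frac12\sum_{n^\pm\in X}\delta_n$, an additive measure on the labeled vertices. Every hemisphere contains exactly one of each antipodal pair, so $\mu(H_j)=\pi$. The key step is a spherical inclusion–exclusion:
\[
\sum_j \mu(H_j\cap H_{j+1}) = (k-2)\pi + \mu(F).
\]
I would prove this by checking pointwise that each labeled vertex $p$ is counted correctly. The set $\{j: p\in H_j\}$ is a cyclic interval of the loops bounding $F$, determined by the position of $p$ relative to the convex polygon $F$, equivalently by its antipode $-p$. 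Counting adjacent pairs inside this interval, and pairing $p$ with $-p$, which lies in the complementary interval, shows that the pair $\{p,-p\}$ contributes to the left side $(k-2)$ times its weight, plus its weight once more if $p\in F$. Summing over pairs and using $\sum_n\delta_n=2\pi$ gives the identity. This bookkeeping is where I expect the main difficulty; a cleaner alternative is induction on the number of loops, checking that adding one great circle preserves the formula.

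\textbf{Conclusion.} By the identity, the cone angle at $v_F$ is $k\pi-(k-2)\pi-\mu(F)=2\pi-\mu(F)$, so the deficit at $v_F$ is $\mu(F)$. Loops avoid the labeled vertices and any two loops are non-homotopic relative to them. Using that $\mathscr{A}$ has $2N-2$ loops, I would argue that each face contains at most one labeled vertex, so that $v_F$ is flat or has deficit exactly $\delta_n$. This is the second place needing care: two labeled vertices in the same face would have to be separated by a loop to realize $2N-2$ pairwise non-homotopic classes. Once the deficits are $\delta_n\in(0,2\pi)$ at exactly $2N$ points and zero elsewhere, Alexandrov's theorem gives a convex polyhedron, unique up to isometry. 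The induced antipodal isometry then extends to a Euclidean isometry by Alexandrov uniqueness; it is fixed-point-free and of order two, hence a central symmetry. Therefore $\mathscr{S}\in\mathcal{C}_{2N}(\delta_1,\dots,\delta_N)$.
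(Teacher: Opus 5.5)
Your key inclusion--exclusion identity has the wrong coefficient on $\mu(F)$, so your conclusion about the deficits does not follow. Carry out the pointwise count you describe. A labeled vertex $p\in F$ lies in every $H_j$, hence in all $k$ lunes $H_j\cap H_{j+1}$, while $-p$ lies in none. So the pair $\{p,-p\}$ contributes $k\,\mu(\{p\})=(k-2)\mu(\{p\})+2\mu(\{p\})$ to the left side: \emph{twice} its weight beyond the $(k-2)$ term, not once. Your count $(s-1)+(k-s-1)=k-2$ for pairs with $p,-p\notin F$ is fine. The correct identity is
\[
\sum_{j=1}^k\mu(H_j\cap H_{j+1})=(k-2)\pi+2\mu(F).
\]
For $k=3$ this is exactly Equation~\ref{eq3}: the three lunes are $F\cup T_j$, with $T_j$ the triangle of $\{l_1,l_2,l_3\}$ across $l_j$, and $F\cup T_1\cup T_2\cup T_3$ contains exactly one point of each antipodal pair. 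Your version gives cone angle $2\pi-\mu(F)$, i.e.\ deficit $\tfrac12\delta_n$ at $n^\pm$ and total deficit $2\pi$, which contradicts Gauss--Bonnet. With the factor $2$ restored, the deficit at $v_F$ is $2\mu(F)=\sum_{n^\pm\in F}\delta_n$, which is Equation~\ref{eq4}. Your direct cyclic-interval count is then a legitimate non-inductive replacement for the paper's route (the triangle case via Equation~\ref{eq3}, then induction by cutting a triangle off $\mathcal{P}_{m-1}$). Your topology, Alexandrov and central-symmetry steps also supply points the paper leaves implicit.

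The second step you flag, at most one labeled vertex per face, does not follow from the number of loops in general. Take labeled vertices in general position. A great circle with normal $u$ is determined up to homotopy by the signs of $\langle u,x\rangle$ for $x=1^+,\dots,N^+$. So loop classes are the regions, modulo $u\mapsto -u$, of the arrangement of the $N$ great circles $\{u:\langle u,n^+\rangle=0\}$; there are $\binom N2+1$ of them. The classes separating $n^+$ from $m^+$ are the regions inside one lune between the $n$-th and $m$-th of these circles, and there can be as few as $N-1$ of them (put $n^+$ close to $m^+$). An arrangement of $2N-2$ loops omits $\binom{N-2}{2}$ classes. Since $\binom{N-2}{2}<N-1$ for $N\le5$, every two labeled vertices are then separated by some loop. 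For $N\ge6$, however, you can choose all $2N-2$ loops among non-separating classes; then $n^+$ and $m^+$ share a face and the dual vertex has deficit $\delta_n+\delta_m$. So for $N\ge 6$ the statement needs this as an extra hypothesis. The paper's proof also takes it for granted, since it simply identifies $v^*$ with a labeled vertex. You should either prove it by the count above for $N=4,5$, the cases used later, or state it as an assumption.
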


\begin{proof}
    The key step of proof is to establish Equation \ref{eq2} below and generalize it to Equation \ref{eq4} by induction. Then we will use Equation \ref{eq4} to check that every vertex in $\mathscr{S}$ has the correct cone-deficit. 

    Since $2N-2\geq 4$, we can choose three distinct loops in $\mathscr{A}$, denoted by $l_i$, $l_j$ and $l_k$. By assumption, they are not concurrent, so they divide the sphere into $8$ geodesic triangles. Denote one of these geodesic triangles by $\mathcal{T}$. Let $u_i$, $u_j$, and $u_k$ be the vertices of $\mathcal{T}$ such that $u_i$ is opposite to the edge of $\mathcal{T}$ contained in $l_i$, and similarly for $u_j$ and $u_k$. The first picture of Figure \ref{fig8} is a sketch of $\mathcal{T}$ with its vertices.

    \begin{figure}[h]
    \centering    \includegraphics[width=0.95\textwidth]{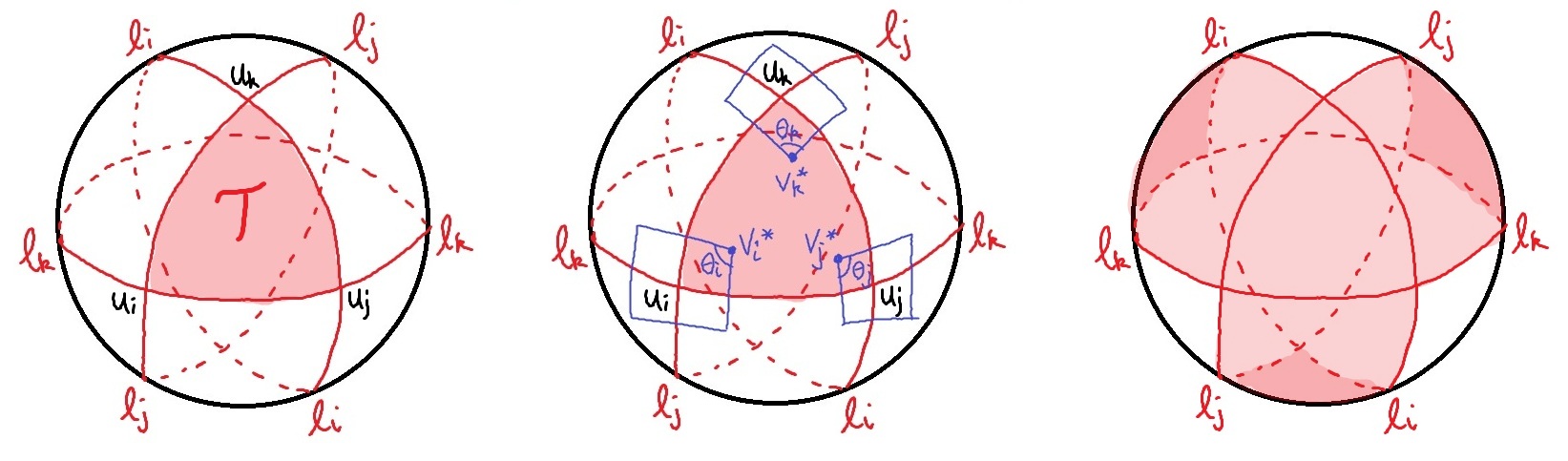}
    \caption{1) The geodesic triangle $\mathcal{T}$; 2) The vertices $v^*_i$, $v^*_j$, and $v^*_k$; 3) The region $\mathcal{L}_i\cup\mathcal{L}_j\cup\mathcal{L}_k$.}
    \label{fig8}
    \end{figure}

     Let $p^*_i$ be the parallelogram dual to $u_i$ in $\mathscr{D}^*$, and $v^*_i$ be its unique vertex in the interior of $\mathcal{T}$. Let $\theta_i$ be the angle of $p^*_i$ at $v^*_i$. Similarly, we define $p^*_j$, $p^*_k$, $v^*_j$, $v^*_k$, $\theta_j$ and $\theta_k$. The second picture of Figure \ref{fig8} shows an example where $v^*_i$, $v^*_j$, and $v^*_k$ are mutually distinct, but they may also repeat in some cases.
    
    Now we want to establish the following Equation \ref{eq2}:
    \begin{equation} \label{eq2}
        \theta_i+\theta_j+\theta_k=2\pi-\sum\limits_{n^+\text{ or } n^-\in\mathcal{T}} \delta_n
    \end{equation}

    Let $\mathcal{L}_i$ be the lune bounded by $l_j$ and $l_k$ and containing $\mathcal{T}$, and similarly define $\mathcal{L}_j$ and $\mathcal{L}_k$. Consider the region $\mathcal{L}_i\cup\mathcal{L}_j\cup\mathcal{L}_k$. Note that its complement in $S^2$ is just the interior of its image under the antipodal map. In the third picture of Figure \ref{fig8}, the region $\mathcal{L}_i\cup\mathcal{L}_j\cup\mathcal{L}_k$ is painted in red and its complement in $S^2$ is white. In addition, $\mathcal{L}_i\cap\mathcal{L}_j\cap\mathcal{L}_k=\mathcal{T}$. Thus, by symmetry and the Gauss-Bonnet Theorem, we have 

    \begin{equation} \label{eq3}
        \sum\limits_{n^+\text{ or } n^-\in\mathcal{L}_i}\delta_n+\sum\limits_{n^+\text{ or } n^-\in\mathcal{L}_j}\delta_n+\sum\limits_{n^+\text{ or } n^-\in\mathcal{L}_k}\delta_n-2\sum\limits_{n^+\text{ or } n^-\in\mathcal{T}}\delta_n=2\pi
    \end{equation}

    Therefore, applying Equation \ref{eq1} and Equation \ref{eq3}, we have

    \begin{align*}
        \theta_i+\theta_j+\theta_k &=(\pi-\frac{1}{2}\sum_{n^+\text{ or } n^-\in\mathcal{L}_i} \delta_n)+(\pi-\frac{1}{2}\sum_{n^+\text{ or } n^-\in\mathcal{L}_j} \delta_n)+(\pi-\frac{1}{2}\sum_{n^+\text{ or } n^-\in\mathcal{L}_k} \delta_n) \\
        &= 3\pi-\frac{1}{2}(2\pi+2\sum\limits_{n^+\text{ or } n^-\in\mathcal{T}}\delta_n) \\
        &=2\pi-\sum\limits_{n^+\text{ or } n^-\in\mathcal{T}} \delta_n
    \end{align*}

    We have thus derived Equation \ref{eq2}.

    Now we focus on all the loops in $\mathscr{A}$. These loops divide $S^2$ into convex geodesic polygons. Let $\mathcal{P}_m$ be one of these polygons. Without loss of generality, we assume that it is bounded by the loops $l_1$, $l_2$, $\dots$, and $l_m$ in $\mathscr{A}$. Figure \ref{fig9} shows an example when $m=6$. Consider the $m$ parallelograms in $\mathscr{D}^*$ dual to the $m$ vertices of $\mathcal{P}_m$. Let $v^*_1$, $v^*_2$, $\dots$, $v^*_m$ be the vertices of these parallelograms inside $\mathcal{P}_m$, which may be distinct or repeat. Let $\Theta_m$ be the sum of the $m$ angles of these parallelograms at the $v^*_i$'s. In the example in Figure \ref{fig9}, the $6$ parallelograms are drawn in blue, and the sum of all the angles marked in red equals $\Theta_6$.

    \begin{figure}[h]
    \centering    \includegraphics[width=0.65\textwidth]{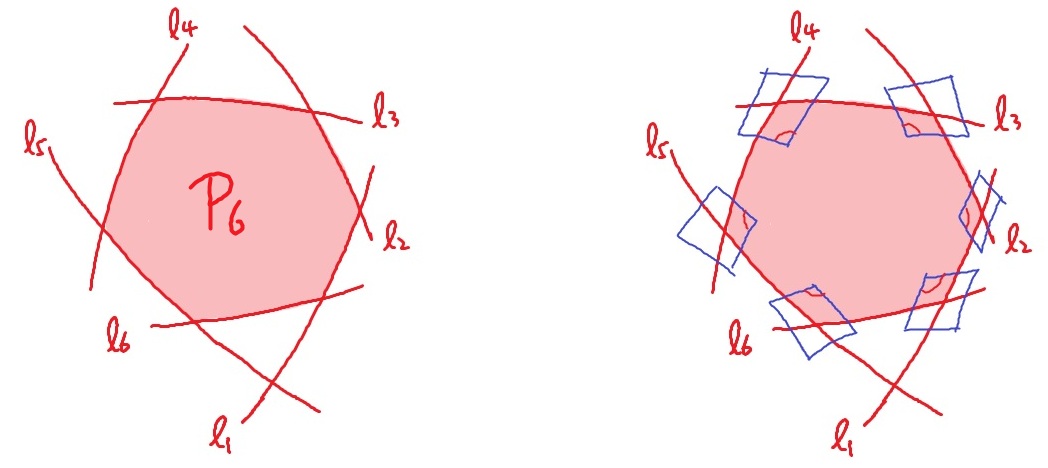}
    \caption{An example of $\mathcal{P}_6$. The sum of all the angles marked in red equals $\Theta_6$.}
    \label{fig9}
    \end{figure}
    
    Next, we want to establish the following equation:

    \begin{equation} \label{eq4}
        \Theta_m=2\pi-\sum\limits_{n^+\text{ or } n^-\in\mathcal{P}_m} \delta_n
    \end{equation}

    Since the base case is Equation \ref{eq2}, it remains to complete the inductive step.

    Consider the convex geodesic $(m-1)$-gon made by $l_1$, $l_2$, $\dots$, $l_{m-1}$ that contains $\mathcal{P}_m$. We denote this polygon by $\mathcal{P}_{m-1}$, and let $\mathcal{T}$ be the triangle $\mathcal{P}_{m-1}\setminus\mathcal{P}_{m}$. In the previous example of $\mathcal{P}_6$, $\mathcal{T}$ is the green triangle in Figure \ref{fig10}. Let $\Theta_{3}$ be the sum of the $3$ angles for $\mathcal{T}$ as we defined previously, and let $\Theta_{m-1}$ be the sum of the $m-1$ angles for $\mathcal{P}_{m-1}$. In the first picture of Figure \ref{fig10}, $\Theta_{3}$ is the sum of the three angles marked in green. In the second picture, $\Theta_{m-1}$ is the sum of the five angles marked in blue.

    \begin{figure}[h]
    \centering    \includegraphics[width=0.9\textwidth]{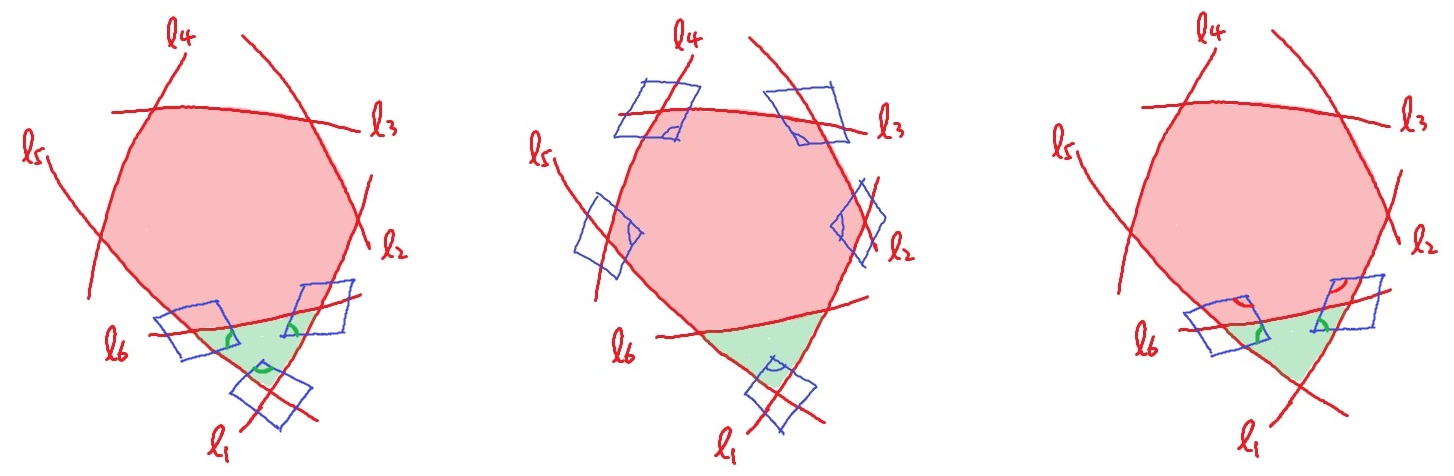}
    \caption{Computing $\Theta_{m}$ using $\Theta_{3}$ (sum of the green angles) and $\Theta_{m-1}$ (sum of the blue angles).}
    \label{fig10}
    \end{figure}
    
    By induction hypothesis, we have
    
    $$\Theta_{m-1}=2\pi-\sum\limits_{n^+\text{ or } n^-\in\mathcal{P}_{m-1}} \delta_n$$

    Now we add up the $m$ angles for $\mathcal{P}_m$ (whose sum equals $\Theta_m$) and the $3$ angles for $\mathcal{T}$ (whose sum is $2\pi-\sum\limits_{n^+\text{ or } n^-\in\mathcal{T}} \delta_n$ by Equation \ref{eq2}). Among these angles, there are two pairs of complementary angles, so their sum equals $2\pi$. These four angles appear in the two parallelograms dual to the two vertices that belong to both $\mathcal{P}_m$ and $\mathcal{T}$. In Figure \ref{fig10}, we draw these two parallelograms and mark the four angles in the third picture. Apart from these four angles, the sum of the remaining angles equals $\Theta_{m-1}$. Therefore, we obtain $\Theta_{m}+\Theta_{3}=\Theta_{m-1}+2\pi$.

    Combined with the induction hypothesis, we have

       $$ \Theta_m+2\pi-\sum\limits_{n^+\text{ or } n^-\in\mathcal{T}} \delta_n=\Theta_{m-1}+2\pi
        =2\pi-\sum\limits_{n^+\text{ or } n^-\in\mathcal{P}_{m-1}} \delta_n+2\pi$$

    After rearrangement, it becomes

        $$\Theta_m=2\pi-(\sum\limits_{n^+\text{ or } n^-\in\mathcal{P}_{m-1}} \delta_n-\sum\limits_{n^+\text{ or } n^-\in\mathcal{T}} \delta_n)=2\pi-\sum\limits_{n^+\text{ or } n^-\in\mathcal{P}_{m}} \delta_n$$
   
    where the second equality follows from the fact that $\mathcal{P}_{m-1}=\mathcal{P}_m\cup\mathcal{T}$. This completes the induction and the proof of Equation \ref{eq4}. 
            
    Finally, we check that every vertex in $\mathscr{S}$ has the right cone-deficit. Let $v^*$ be any vertex in $\mathscr{D}^*$. We need to show that the cone-angle at $v^*$ in $\mathscr{S}$ is $2\pi-\delta_n$ if $v^*$ is the labeled vertex $n^+$ or $n^-$, and $2\pi$ otherwise. 

    Let $\mathcal{P}_{m}$ be the convex geodesic polygon cut by the loops in $\mathscr{A}$ that is dual to $v^*$. Then the parallelograms dual to the vertices of $\mathcal{P}_{m}$ meet at $v^*$, so $\Theta_m$ is just the cone-angle at $v^*$. In addition, $v^*$ is the only vertex in $\mathscr{D}^*$ that is inside $\mathcal{P}_{m}$. Thus, by Equation \ref{eq4}, if $v^*$ is the labeled vertex $n^+$ or $n^-$, then $\Theta_m=2\pi-\delta_n$. If it is not a labeled vertex, then the sum of cone-deficits inside $\mathcal{P}_{m}$ is $0$, so $\Theta_m=2\pi$. This completes the proof of the theorem. 
\end{proof}

In preparation for the next section, we introduce some terminologies. 

Recall that all quadrilaterals in $\mathscr{D}^*$ are parallelograms whose angles are determined by the prescribed cone-deficits. We can assign arbitrary positive numbers to the edge-lengths of those parallelograms, provided that all the edges transverse to the same loop have the same length. In this way, we obtain a polyhedral surface. Let $\mathcal{O}$ be the collection of all polyhedral surfaces that can be obtained in this way. By Theorem \ref{thm3}, $\mathcal{O}$ is a subset of $ \mathcal{C}_{2N}(\delta_1, \delta_2, \dots, \delta_N)$. 

If we allow some edges in $\mathscr{D}^*$ to have $0$ length, what we can obtain is a larger set denoted by $\overline{\mathcal{O}}$. Note that $\overline{\mathcal{O}}\nsubseteq \mathcal{C}_{2N}(\delta_1, \delta_2, \dots, \delta_N)$, since we can assign length $0$ to all edges in $\mathscr{D}^*$. From now on, we will call $\mathcal{O}$ the \textbf{space arising from $\mathscr{A}$}, and $\overline{\mathcal{O}}$ \textbf{the closure of $\mathbf{\mathcal{O}}$}. By a \textbf{face} $\sigma_i$ of $\overline{\mathcal{O}}$ we mean the collection of elements in $\overline{\mathcal{O}}$ where the lengths of all edges in $\mathscr{D}^*$ transverse to the loop $l_i$ are $0$. 

Now suppose that $\mathscr{A}'$ is also a loop arrangement on a sphere with $2N$ labeled vertices. We say that $\mathscr{A}$ and $\mathscr{A}'$ are \textbf{equivalent} if there exists a homeomorphism between the two spheres that maps vertices to vertices and loops to loops preserving their labels. If $\mathscr{A}$ and $\mathscr{A}'$ differ by only one loop (that is, they become equivalent after we replace one loop in $\mathscr{A}$ or $\mathscr{A}'$ by another), we say that $\mathscr{A}$ and $\mathscr{A}'$ are \textbf{adjacent}. 
 
 For example, consider the loop arrangements $\mathscr{A}$ and $\mathscr{A}'$ in Figure \ref{fig11}. Note that $\mathscr{A}'$ can be obtained from $\mathscr{A}$ by moving the vertices $2^+$, $3^+$, $2^-$ and $3^-$ across the loop $l_a$ simultaneously. In fact, $\mathscr{A}$ and $\mathscr{A}'$ are adjacent. One can see this by dragging the vertices $2^+$, $3^+$, $2^-$ and $3^-$ in $\mathscr{A}'$ to their positions in $\mathscr{A}$, and distorting the loop $l_a$ in $\mathscr{A}'$ (without changing its homotopy class) to the green curve in the third picture. This shows that $\mathscr{A}$ and $\mathscr{A}'$ differ only by $l_a$.

    \begin{figure}[h]
    \centering    \includegraphics[width=0.95\textwidth]{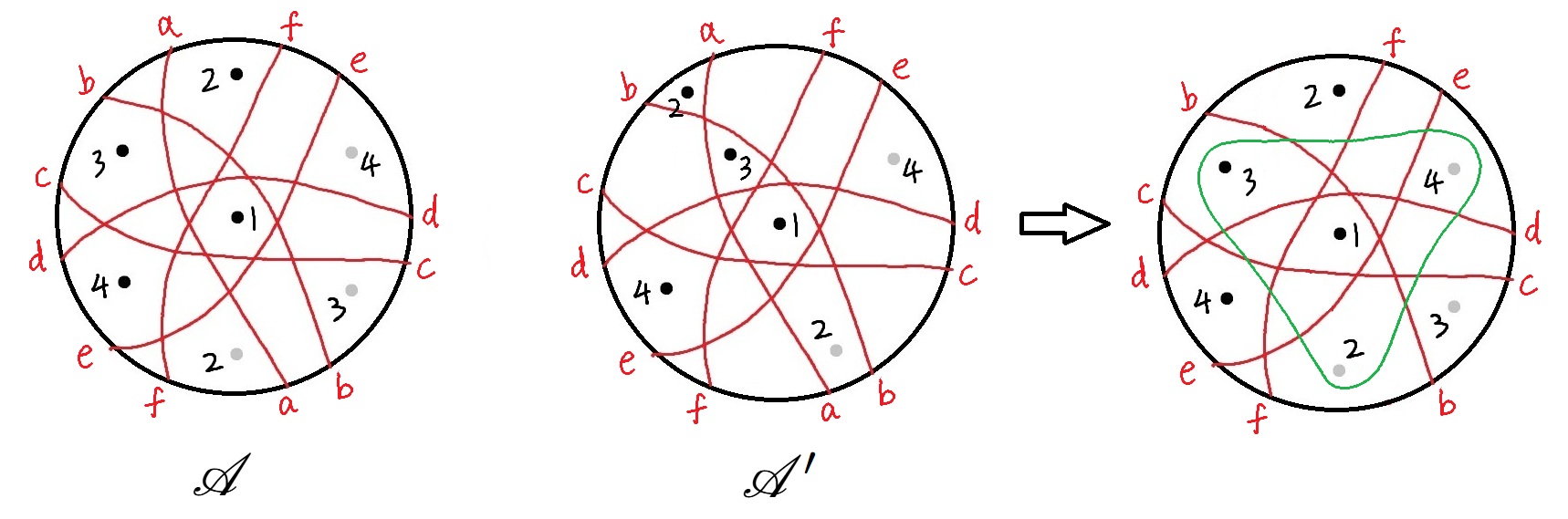}
    \caption{The loop arrangements $\mathscr{A}$ and $\mathscr{A}'$ are adjacent and differ by $l_a$.}
    \label{fig11}
    \end{figure}

   Let ${\mathcal{O}'}$ be the space arising from $\mathscr{A}'$ and $\overline{\mathcal{O}'}$ be its closure. We say that $\overline{\mathcal{O}}$ and $\overline{\mathcal{O}'}$ are \textbf{adjacent} if $\mathscr{A}$ and $\mathscr{A}'$ are adjacent loop arrangements. Suppose that $\mathscr{A}$ and $\mathscr{A}'$ differ only by the loop $l_i$. Then $\sigma_i$ is a common face of $\overline{\mathcal{O}}$ and $\overline{\mathcal{O}'}$. In this case, $\overline{\mathcal{O}}$ and $\overline{\mathcal{O}'}$ can be on the same side or on different sides of $\sigma_i$. We say that $\overline{\mathcal{O}}$ and $\overline{\mathcal{O}'}$ are \textbf{on the same side of $\mathbf{\sigma_i}$} if ${\mathcal{O}}$ and ${\mathcal{O}'}$ overlap in polyhedral surfaces sufficiently close to $\sigma_i$. Otherwise, we say that they are \textbf{on different sides of $\mathbf{\sigma_i}$}, or \textbf{$\mathbf{\overline{\mathcal{O}'}}$ is on the other side of $\mathbf{\sigma_i}$ with respect to $\mathbf{\overline{\mathcal{O}}}$}.

   Finally, each loop in $\mathscr{A}$ can be equipped with an \textbf{orientation}. Since a loop divides the sphere into two parts, it can have two possible orientations depending on which part is considered its inside or outside. We can choose one orientation from the two possibilities, and represent it by an arrow on that loop pointing outside. The orientation of a loop induces a direction on each edge transverse to that loop in $\mathscr{D}^*$. For example, consider the loop arrangement in Figure \ref{fig12}. The first picture shows the orientation we choose for the loop $l_a$, and the second picture shows the induced directions of some edges transverse to $l_a$ in $\mathscr{D}^*$. By choosing an orientation for each loop in $\mathscr{A}$, we can turn $\mathscr{D}^*$ into a directed graph.

    \begin{figure}[h]
    \centering    \includegraphics[width=0.7\textwidth]{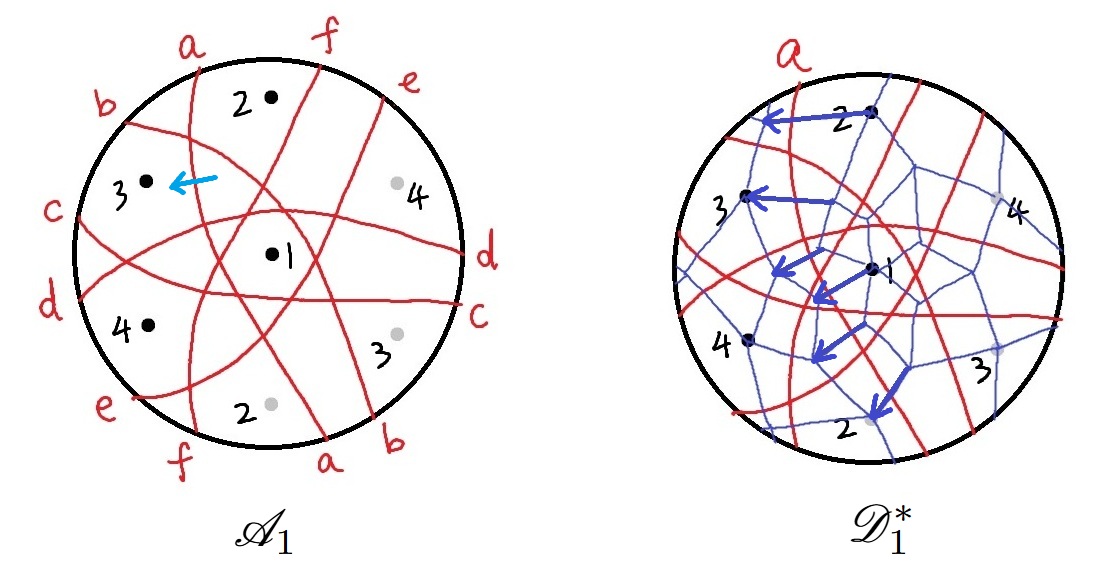}
    \caption{1) The orientation of $l_a$ is marked by a blue arrow; 2) The induced directed edges in $\mathscr{D}^*$.}
    \label{fig12}
    \end{figure}  


\section{Polyhedra with $8$ Vertices}

In this section, we focus on $\mathcal{C}_8(\delta_1, \delta_2, \delta_3, \delta_4)$, the space of centrally symmetric convex polyhedral surface with prescribed cone-deficits $\delta_1$, $\delta_2$, $\delta_3$, and $\delta_4$. The goal is to show that every polyhedral surface in this space has a parallelogram decomposition that is invariant under the antipodal map. Restricted to this section, a loop arrangement always contains $6$ loops labeled by $l_a$, $l_b$, $l_c$, $l_d$, $l_e$, and $l_f$ on a sphere with $8$ vertices labeled by $\pm 1$, $\pm 2$, $\pm 3$ and $\pm 4$.

 Let $\mathscr{A}_1$ be the loop arrangement in the first picture of Figure \ref{fig6}, and $\mathscr{D}_1^*$ be its dual graph. Let ${\mathcal{O}_1}$ the space arising from $\mathscr{A}_1$, and $\overline{\mathcal{O}_1}$ be its closure.

 Let $a$, $b$, $c$, $d$, $e$ and $f$ be positive variables. Recall from Section 3 that all quadrilaterals in $\mathscr{D}_1^*$ are considered parallelograms whose angles are determined by the prescribed cone-deficits. Thus, we have a map $F_1$ that sends every vector $(a,b,c,d,e,f)$ to a polyhedral surface $\mathscr{S}$ in $\mathcal{O}_1$ by assigning length $i$ to all edges in $\mathscr{D}_1^*$ transverse to $l_i$, where $i\in\{a,b,c,d,e,f\}$. We denote by $(0,\infty)^6$ the positive orthant of $\mathbb{R}^6$, which is the space of vectors in $\mathbb{R}^6$ where all components are positive. Similarly, denote by $[0,\infty)^6$ the non-negative orthant of $\mathbb{R}^6$. By definition, $F_1$ maps $(0,\infty)^6$ and $[0,\infty)^6$ onto $\mathcal{O}_1$ and $\overline{\mathcal{O}_1}$, respectively.

\begin{lemma} \label{lem1}
    The set $\mathcal{O}_1$ is open in  $\mathcal{C}_8(\delta_1, \delta_2, \delta_3, \delta_4)$.
\end{lemma}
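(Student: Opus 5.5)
Since $\mathcal{C}_8(\delta_1,\delta_2,\delta_3,\delta_4)$ is a complex manifold of complex dimension $3$ (Theorem \ref{thm1}), i.e.\ a real $6$-manifold, and $F_1$ is defined on the open orthant $(0,\infty)^6\subset\mathbb{R}^6$, the natural route is invariance of domain. It suffices to show that $F_1:(0,\infty)^6\to\mathcal{C}_8(\delta_1,\delta_2,\delta_3,\delta_4)$ is continuous and injective, and more concretely that it is, in local coordinates, the restriction of an invertible real linear map. Then its image $\mathcal{O}_1$ is open.

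\textbf{Step 1: linearity.} Fix a surface $\mathscr{S}=F_1(a,b,c,d,e,f)$. Choose $N-1=3$ directed edge-paths on $\mathscr{S}$ made of edges of the parallelograms in $\mathscr{D}_1^*$, for instance paths joining labeled vertices as in the charts of Theorem \ref{thm1}. Develop them into $\mathbb{C}$ using a fixed unfolding of the parallelograms they cross. Since all angles of the parallelograms are fixed constants determined by the $\delta_i$ (Equation \ref{eq1}), each developed edge transverse to $l_i$ is $i$ times a fixed unit complex number. This unit number is determined by the combinatorics of $\mathscr{D}_1^*$ and the chosen orientations of the loops, as in Figure \ref{fig12}, and is independent of $(a,\dots,f)$. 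Hence each coordinate $D(e_k)$ is a real linear combination $\sum_i i\,\omega_{k,i}$ with constant $\omega_{k,i}\in\mathbb{C}$. In particular, the composite of $F_1$ with a chart of Theorem \ref{thm1} is the restriction of a real linear map $L:\mathbb{R}^6\to\mathbb{C}^3\cong\mathbb{R}^6$. This holds uniformly on all of $(0,\infty)^6$, since the combinatorics never changes, which also gives continuity.

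\textbf{Step 2: $L$ is invertible.} This is the main obstacle. My first attempt would be to write down $L$ explicitly for $\mathscr{A}_1$: pick three convenient edge-paths (for example $1^+\to2^+\to3^+\to1^-$) read off from Figure \ref{fig6}, compute the $3\times 6$ complex matrix $(\omega_{k,i})$, and verify that its realification has nonzero determinant for every admissible $(\delta_1,\dots,\delta_4)$. The determinant should factor as a product of sines of the angles in Equation \ref{eq1}, which are all in $(0,\pi)$.

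A cleaner alternative is to use the area. The area of $\mathscr{S}$ is $\sum$ over parallelograms of $ij\sin\theta_{ij}$, which is a quadratic form $A(a,\dots,f)$ in the edge-lengths. If $L$ had a kernel vector $w$, then the surfaces $F_1(x+tw)$ would coincide for small $t$. But injectivity can also be checked directly: from a surface in $\mathcal{O}_1$ we can recover each length $i$ as a developed distance, namely $|D(\gamma)|$ divided by a fixed constant for a suitable path $\gamma$ crossing only $l_i$-transverse edges, or as a linear combination of the coordinates. So I would aim to show that each of $a,\dots,f$ is itself a real linear function of the chart coordinates, by expressing it via the real or imaginary part of a holonomy-invariant developed vector after rotating by a fixed angle. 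This gives a left inverse for $L$.

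\textbf{Step 3: conclusion.} With $L$ an injective linear map between $6$-dimensional real spaces, $L$ is an isomorphism. So $F_1$ maps the open set $(0,\infty)^6$ homeomorphically onto an open subset of each chart, and $\mathcal{O}_1$ is open in $\mathcal{C}_8(\delta_1,\delta_2,\delta_3,\delta_4)$. Alternatively, one can finish with invariance of domain applied to a continuous injection.
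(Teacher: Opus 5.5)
Your skeleton (the edge-lengths enter a developed frame linearly, the resulting $6\times 6$ real matrix is invertible, then invariance of domain) is the same as the paper's. However, Step 2, which you rightly call the main obstacle, is never carried out, and it is the entire content of the lemma. The claim that the determinant ``should factor as a product of sines'' is not backed by any computation, and your fallbacks do not close the gap. A kernel vector of $L$ making $F_1(x+tw)$ constant just restates non-injectivity; it is not a contradiction. The length of an edge of $\mathscr{D}_1^*$ is not intrinsic to the surface, because the decomposition is not part of the data of a point of $\mathcal{C}_8(\delta_1,\delta_2,\delta_3,\delta_4)$. So only developed vectors between cone points are usable. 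But in $\mathscr{A}_1$ no two labeled vertices are separated by a single loop, so no such $|D(\gamma)|$ isolates one length, and $|\cdot|$ is not linear anyway. Finally, ``express each length as a real linear function of the chart coordinates'' is exactly the left inverse of $L$ that has to be constructed.

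The missing idea is a choice of frame that makes the matrix visibly invertible. The paper cuts along the antipodally invariant cycle through $2^\pm,3^\pm,4^\pm$ (omitting $1^\pm$) and uses $(Z_{2^+3^+},Z_{3^+4^+},Z_{4^+2^-})$. Each of these three pairs of vertices is separated by exactly two loops, and the pairs $\{l_a,l_b\}$, $\{l_c,l_d\}$, $\{l_e,l_f\}$ partition the six loops. Hence:
each $Z$ has the form $i\,u_i+j\,u_j$ with constant unit vectors;
$M_1$ is block diagonal with three $2\times 2$ blocks;
each block determinant is the sine of the angle of the parallelogram where the two paired loops cross.
One of the lunes bounded by each such pair encloses only $1^+$, so Equation \ref{eq1} gives $\sin(\delta_1/2)\neq 0$ for each block.

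Your suggested path $1^+\to 2^+\to 3^+\to 1^-$ lacks this structure. In $\mathscr{A}_1$, $1^+$ is separated from each of $2^\pm,3^\pm,4^\pm$ by three loops, one from each pair, so your matrix is not block diagonal. A factorization into sines, if it holds, would need an extra elimination step. It would also need a check that each relevant pair of developed directions bounds a common parallelogram, with no cone point intervening in the development. Without some such explicit computation, injectivity of $F_1$, and hence openness of $\mathcal{O}_1$, is not established.
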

\begin{proof}    

     It is clear that $F_1$ is continuous on $(0,\infty)^6$. We want to show that it is also injective on $(0,\infty)^6$.

    Let $\mathscr{S}$ be a polyhedral surface in $\mathcal{O}_1$ whose parallelogram decomposition constructed from $\mathscr{D}_1^*$ is sketched in the first picture of Figure \ref{fig13}. Suppose that $\mathscr{S}=F_1(a, b, c, d, e, f)$, where $a$, $b$, $\dots$, $f$ are the lengths of the edges transverse to $l_a$, $l_b$, $\dots$, $l_f$, respectively. Analogous to the proof of Theorem \ref{thm1}, we cut $\mathscr{S}$ open along the green edges (three of which are directed) in the first picture of Figure \ref{fig13} and unfold it to obtain a planar polygon $\mathcal{P}_{\mathscr{S}^+}$. This time we identify the plane with $\mathbb{R}^2$ instead of $\mathbb{C}$. We denote the three green directed edges by $2^+3^+$, $3^+4^+$ and $4^+2^-$. Their images in $\mathcal{P}_{\mathscr{S}^+}$ determine three vectors in $\mathbb{R}^2$, denoted by $Z_{2^+3^+}$, $Z_{3^+4^+}$ and $Z_{4^+2^-}$. From the first picture, we see that there exist unit vectors $\begin{pmatrix} a_x \\a_y\end{pmatrix}$, $\begin{pmatrix} b_x \\b_y\end{pmatrix}$, $\begin{pmatrix} c_x \\c_y\end{pmatrix}$, $\begin{pmatrix} d_x \\d_y\end{pmatrix}$, $\begin{pmatrix} e_x \\e_y\end{pmatrix}$ and $\begin{pmatrix} f_x \\f_y\end{pmatrix}$ in $\mathbb{R}^2$ such that 

    \begin{equation} \label{eq5}
        Z_{2^+3^+}=a\begin{pmatrix} a_x \\a_y\end{pmatrix}+b\begin{pmatrix} b_x \\b_y\end{pmatrix}, \quad Z_{3^+4^+}=c\begin{pmatrix} c_x \\c_y\end{pmatrix}+d\begin{pmatrix} d_x \\d_y\end{pmatrix}\quad \textrm{and} \quad Z_{4^+2^-}=e\begin{pmatrix} e_x \\e_y\end{pmatrix}+f\begin{pmatrix} f_x \\f_y\end{pmatrix}
    \end{equation}

    \begin{figure}[h]
    \centering    \includegraphics[width=0.95\textwidth]{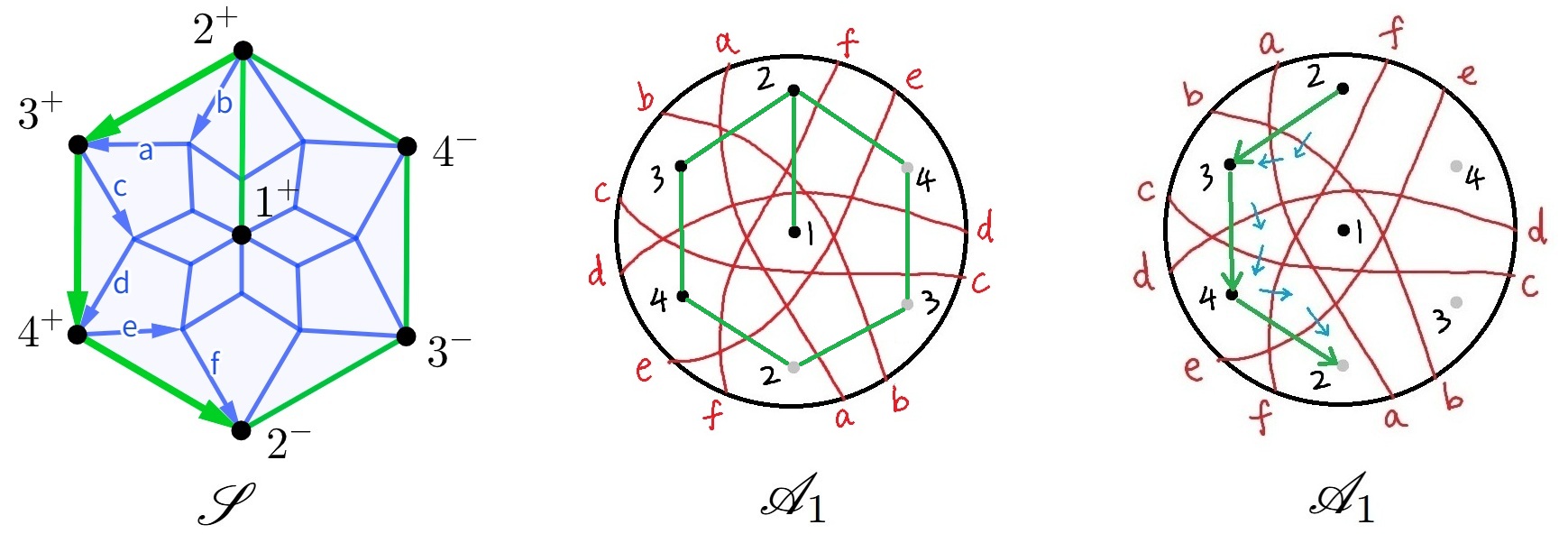}
    \caption{We can demonstrate a consistent way of unfolding polyhedral surfaces in $\mathcal{O}$ by drawing segments on a sphere with $\mathscr{A}_1$. }
    \label{fig13}
    \end{figure}

    Now we vary $\mathscr{S}$ in $\mathcal{O}_1$. The green edges in the first picture of Figure \ref{fig13} vary continuously with $\mathscr{S}$ without changing their homotopy classes. This allows us to define a consistent way of unfolding all surfaces in $\mathcal{O}_1$. Since such a way of unfolding exists throughout $\mathcal{O}_1$, we can demonstrate it on a picture of $\mathscr{A}_1$ rather than on a particular surface only. In the second picture in Figure \ref{fig13}, we use green edges that join the labeled vertices of the sphere to represent our consistent choices of the edges on the surfaces in $\mathcal{O}_1$ to cut along.
    
    In this way, we can define $\mathcal{P}_{\mathscr{S}^+}$ consistently (up to Euclidean isometries) for all surfaces in $\mathcal{O}_1$. Each $\mathcal{P}_{\mathscr{S}^+}$ gives rise to a vector $(Z_{2^+3^+}$, $Z_{3^+4^+}$, $Z_{4^+2^-})\in\mathbb{R}^6$. Up to rotations, we may assume that $Z_{2^+3^+}$, $Z_{3^+4^+}$, and $Z_{4^+2^-}$ can be expressed in terms of $a$, $b$, $\dots$, $f$ in Equation \ref{eq5}, where the unit vectors remain constant vectors. Let $\psi_1$ be the map on $(0,\infty)^6$ that sends every vector $(a,b,c,d,e,f)$ to $(Z_{2^+3^+}, Z_{3^+4^+}, Z_{4^+2^-})$ via unfolding surfaces in $\mathcal{O}_1$. Then we can write down the formula of $\psi_1$ as follows:    

    \begin{equation*} 
        \psi_1(a,b,c,d,e,f)=\begin{pmatrix} a_x & b_x & 0& 0 & 0 & 0\\ a_y & b_y & 0 & 0 & 0 & 0 \\ 0 & 0 & c_x & d_x & 0 & 0\\ 0 & 0 & c_y & d_y & 0 & 0 \\ 0 & 0 & 0 & 0 & e_x & f_x \\ 0 & 0 & 0 & 0 & e_y & f_y \end{pmatrix} \begin{pmatrix} a \\ b \\ c \\ d \\e \\ f   
    \end{pmatrix}
    \end{equation*}

    Denote the matrix above by $M_1$. Then 

    $$\text{det}(M_1)=\begin{vmatrix}a_x & b_x \\ a_y & b_y\end{vmatrix}\cdot\begin{vmatrix}c_x & d_x \\ c_y & d_y\end{vmatrix}\cdot\begin{vmatrix}e_x & f_x \\ e_y & f_y\end{vmatrix}=\sin(\frac{\delta_1}{2})\cdot\sin(\frac{\delta_1}{2})\cdot\sin(\frac{\delta_1}{2})>0$$
    where the determinant of each $2\times 2$ matrix is computed based on the angles in three parallelograms, which can be calculated by looking at the vertices enclosed in some lune bounded by two loops. One may refer to Equation \ref{eq1} in Section 3 for more details. 
    
    Therefore, $\psi_1$ is continuous and invertible. Since $\psi_1$ is a composition of $F_1$ by another map, $F_1$ is injective on $(0,\infty)^6$. By Theorem \ref{thm1}, $\mathcal{C}_8(\delta_1, \delta_2, \delta_3, \delta_4)$ has real dimension $6$. By the invariance of domain, $\mathcal{O}_1=F_1((0,\infty)^6)$ is open in $\mathcal{C}_8(\delta_1, \delta_2, \delta_3, \delta_4)$.     
\end{proof}

From the proof of Lemma \ref{lem1}, we see that $(Z_{2^+3^+}$, $Z_{3^+4^+}$, $Z_{4^+2^-})$ is a local frame defined on $\mathcal{O}_1$, so we can also visualize this local frame with $\mathscr{A}_1$. In the third picture of \ref{fig13}, we visualize its three components $Z_{2^+3^+}$, $Z_{3^+4^+}$, and $Z_{4^+2^-}$ as three green directed edges joining their corresponding labeled vertices on the sphere. This visualization is sometimes helpful to see the existence of the constant unit vectors $\begin{pmatrix} a_x \\a_y\end{pmatrix}$, $\begin{pmatrix} b_x \\b_y\end{pmatrix}$, $\dots$, $\begin{pmatrix} f_x \\f_y\end{pmatrix}$ in Equation \ref{eq5}. For example, let us orient the loops in $\mathscr{A}_1$ according to the blue arrows in the third picture, and consider the component $Z_{2^+3^+}$. The orientations of $l_a$ and $l_b$ induce directions on the edges transverse to them in $\mathscr{D}^*_1$, including the two edges labeled $a$ and $b$ in the first picture of Figure \ref{fig13}. We can then consider the triangle formed by these two edges and the edge $2^+3^+$, and see the existence of $\begin{pmatrix} a_x \\a_y\end{pmatrix}$ and $\begin{pmatrix} b_x \\b_y\end{pmatrix}$ to make Equation \ref{eq5} hold.

We say that the map $\psi_1$ in the proof of Lemma \ref{lem1} is \textbf{represented by the matrix $\bm{M_1}$}. It can be defined and is invertible on $[0,\infty)^6$. In addition, we know that $F_1$ maps $[0,\infty)^6$ onto $\overline{\mathcal{O}_1}$. This implies that $F_1$ is a homeomorphism from $[0,\infty)^6$ to $\overline{\mathcal{O}_1}$. Therefore, every element in $\overline{\mathcal{O}_1}$ corresponds to a unique vector in $[0,\infty)^6$ through $F_1$. This vector will be called its \textbf{coordinates with respect to $\mathbf{\mathscr{A}_1}$}, or simply \textbf{coordinates} when the underlying loop arrangement is clear. Similarly, we can also consider its $a$-coordinate, $b$-coordinate, etc.

The collection of elements in $\overline{\mathcal{O}_1}$ where at least two coordinates vanish is called its \textbf{codimension-two boundary}. For example, if an element in $\overline{\mathcal{O}_1}$ has zero $a$-coordinate and $b$-coordinate, while the remaining coordinates are positive, then it belongs to the codimension-two boundary. In this case, it is a surface in $\mathcal{C}_6(\delta_1, \delta_2+\delta_3, \delta_4)$. In fact, outside the codimension-two boundary, elements in $\overline{\mathcal{O}_1}$ belongs to $\mathcal{C}_8(\delta_1, \delta_2, \delta_3, \delta_4)$, so the local frame defined above is still valid in the interior of every face of $\overline{\mathcal{O}_1}$. Later, we will show that outside the codimension-two boundary, every element in $\overline{\mathcal{O}_1}$ has an open neighborhood in $\mathcal{C}_8(\delta_1, \delta_2, \delta_3, \delta_4)$.

According to the definition of $\overline{\mathcal{O}_1}$, every surface in $\overline{\mathcal{O}_1}$ has a parallelogram decomposition that is invariant under the antipodal map. Our goal of this section is to show that this result is true for all surfaces in $\mathcal{C}_8(\delta_1, \delta_2, \delta_3, \delta_4)$. Since there exist polyhedral surfaces in $\mathcal{C}_8(\delta_1, \delta_2, \delta_3, \delta_4)$ but outside $\overline{\mathcal{O}_1}$, we need to consider spaces arising from different loop arrangements.

Let $\mathscr{A}_2$ be the loop arrangement  $\mathscr{A}'$ in the second picture of Figure \ref{fig11}. According to the paragraph before Figure \ref{fig11}, $\mathscr{A}_1$ and $\mathscr{A}_2$ are adjacent loop arrangements since they differ by $l_a$ only. 

In Figure \ref{fig14}, we sketch $\mathscr{A}_2$ in the first picture. In fact, $\mathscr{A}_2$ is \textbf{combinatorically equivalent} to $\mathscr{A}_1$, which means that $\mathscr{A}_2$ becomes equivalent to $\mathscr{A}_1$ after we relabel its loops and vertices. One can see this by rotating the sphere so that $4^+$ is at the center of our view, as the second picture shows. Then one can check that each loop in the first picture can be moved to the loop with the same label in the second picture without changing its homotopy class. It may be helpful to consider which vertices are on the same side with respect to each loop. Then it is clear that $\mathscr{A}_2$ is combinatorically equivalent to $\mathscr{A}_1$ by comparing the second and the third picture.

    \begin{figure}[h]
    \centering    \includegraphics[width=0.95\textwidth]{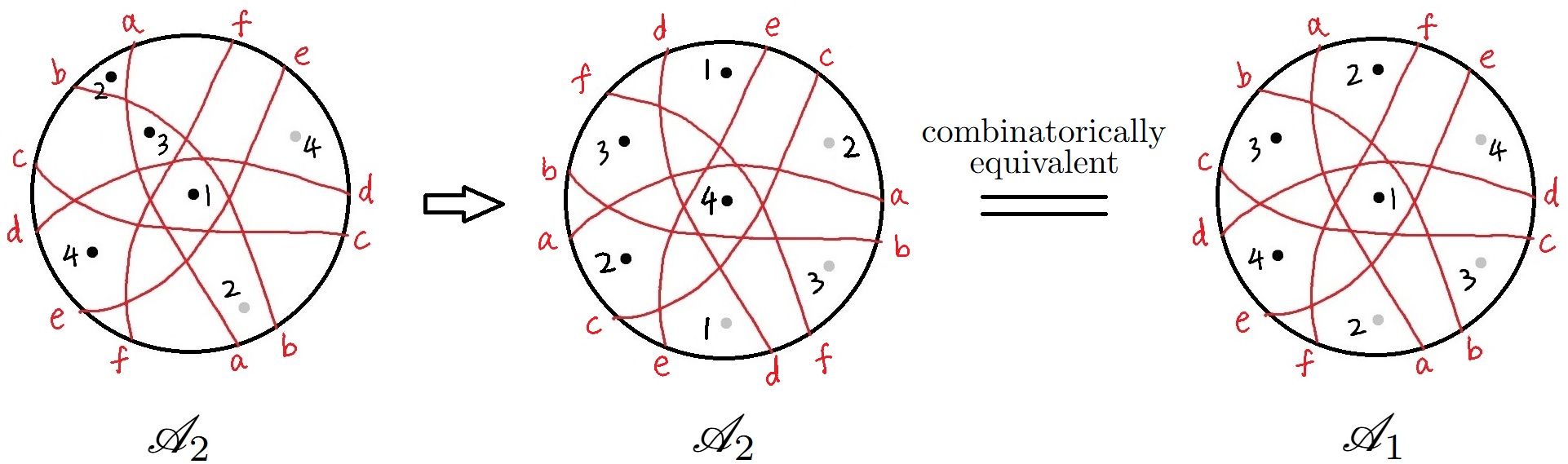}
    \caption{The loop arrangements $\mathscr{A}_1$ and $\mathscr{A}_2$ are combinatorically equivalent.}
    \label{fig14}
    \end{figure}

 Let ${\mathcal{O}_2}$ be the space arising from $\mathscr{A}_2$ and $\overline{\mathcal{O}_2}$ be its closure. Since $\mathscr{A}_2$ is combinatorically equivalent to $\mathscr{A}_1$, ${\mathcal{O}_2}$ is also open in $\mathcal{C}_8(\delta_1, \delta_2, \delta_3, \delta_4)$ by Lemma \ref{lem1} and symmetry. Since $\mathscr{A}_1$ and $\mathscr{A}_2$ differ by $l_a$ only, $\overline{\mathcal{O}_1}$ and $\overline{\mathcal{O}_2}$ are adjacent and coincide on the face $\sigma_a$. Then we have the following result: 

\begin{lemma} \label{lem2}
   The spaces $\overline{\mathcal{O}_1}$ and $\overline{\mathcal{O}_2}$ are on different sides of $\sigma_a$. 
\end{lemma}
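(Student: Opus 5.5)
The plan is to compare $\mathcal{O}_1$ and $\mathcal{O}_2$ inside a single local frame that is valid near the interior of $\sigma_a$. The claim then reduces to a sign statement about one column of a linear map. We use the frame $(Z_{2^+3^+}, Z_{3^+4^+}, Z_{4^+2^-})$ from the proof of Lemma \ref{lem1}. On $\overline{\mathcal{O}_1}$ it is the map $\psi_1$, represented by $M_1$. Outside the codimension-two boundary the elements of $\overline{\mathcal{O}_1}$ lie in $\mathcal{C}_8(\delta_1,\delta_2,\delta_3,\delta_4)$. Hence, by Theorem \ref{thm1} and the discussion of developing maps there, this frame extends to an open neighbourhood $\mathscr{U}$ in $\mathcal{C}_8(\delta_1,\delta_2,\delta_3,\delta_4)$ of any point $\mathscr{S}_0$ in the interior of $\sigma_a$ (that is, $a=0$ and $b,\dots,f>0$).

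The first step is to write the same frame in terms of the coordinates $(a',b',\dots,f')$ of $\mathscr{A}_2$. We draw the green directed edges $2^+3^+$, $3^+4^+$, $4^+2^-$ on the picture of $\mathscr{A}_2$. Their homotopy classes are transported along the deformation described before Figure \ref{fig11}, in which $2^\pm,3^\pm$ are dragged back across $l_a$ and $l_a$ is distorted into the green curve. For each such edge we record the signed sequence of loops it crosses. Exactly as in the remark after Lemma \ref{lem1}, this gives $\psi_2(a',\dots,f')=M_2(a',\dots,f')^T$ in the same frame, with constant unit column vectors.

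Only the loop $l_a$ has changed, and the surfaces on $\sigma_a$ are literally the same parallelogram decompositions. So the columns of $M_2$ for $b,\dots,f$ coincide with those of $M_1$, and $\psi_1$ and $\psi_2$ agree on $\{a=0\}$. The only difference is the $a$-column. Because $2^\pm$ and $3^\pm$ now lie on the other side of $l_a$, the green edges joining a moved vertex to an unmoved one ($3^+4^+$ and $4^+2^-$) cross $l_a$ with the opposite orientation, or cross it where they previously did not. The edge $2^+3^+$ still crosses it the same number of times algebraically.

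The key claim is that, modulo the span $H$ of the $b,\dots,f$ columns (a hyperplane, since $\det M_1\neq0$), the $a$-column of $M_2$ is a \emph{negative} multiple of the $a$-column of $M_1$. I expect this to be the main obstacle. It requires carefully tracking the directions induced on the $a$-edges by a fixed orientation of $l_a$ in both arrangements, and the rotation holonomy picked up when the developed path goes around the moved vertices. I would first try to verify it directly on the explicit pictures: fix the blue orientation of $l_a$ from Figure \ref{fig13}, compute the $a$-column of $M_2$ from the triangle formed by the green edge and its transverse $a$-edges, and check that its component transverse to $H$ has the opposite sign. A useful consistency check is that $\det M_2\neq 0$ with the opposite sign to $\det M_1$, by the same product-of-sines computation as in Lemma \ref{lem1}.

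Given this, the conclusion is immediate. Let $\lambda$ be a linear functional on $\mathbb{R}^6$ vanishing on $H$ and positive on the $a$-column of $M_1$. Near $\mathscr{S}_0$, every surface of $\mathcal{O}_1$ has $\lambda(\psi_1)=a\,\lambda(\text{col}_a M_1)>0$, while every surface of $\mathcal{O}_2$ has $\lambda(\psi_2)=a'\lambda(\text{col}_a M_2)<0$. Since $\lambda\circ\psi$ is a single continuous function on the common neighbourhood $\mathscr{U}$, no surface sufficiently close to $\sigma_a$ lies in both $\mathcal{O}_1$ and $\mathcal{O}_2$. Hence $\overline{\mathcal{O}_1}$ and $\overline{\mathcal{O}_2}$ are on different sides of $\sigma_a$.
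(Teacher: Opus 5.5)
Your framework is the paper's. Take $\lambda(v)=\det(v,\mathrm{col}_b,\dots,\mathrm{col}_f)$. Then $\lambda(\psi_1)=a\det M_1$ and $\lambda(\psi_2)=a'\det M_2$, so your last paragraph is exactly the paper's Cramer's-rule argument. Your reason why the $b,\dots,f$ columns coincide ($\psi_1=\psi_2$ on $\{a=0\}$) is sound.

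The gap is that the step you flag as ``the main obstacle'' is the entire content of the lemma, and you do not establish it. It cannot follow from a general principle. Adjacent closures can lie on the \emph{same} side of their common face (second picture of Figure~\ref{fig19}), so the sign must be computed for this particular pair $\mathscr{A}_1,\mathscr{A}_2$. Your ``consistency check'' is also not independent of the key claim. Since $\lambda(\mathrm{col}_a M_2)=\det M_2$, the $a$-column of $M_2$ is a negative multiple of that of $M_1$ modulo $H$ if and only if $\det M_2/\det M_1<0$. So the product-of-sines computation, done with signs, \emph{is} the proof, and that is what is missing.

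Your heuristic also looks in the wrong place. The vectors $(c_x,c_y)^T,(d_x,d_y)^T$ span $\mathbb{R}^2$, and so do $(e_x,e_y)^T,(f_x,f_y)^T$. Hence $H$ is exactly the set of vectors whose first two entries are a multiple of $(b_x,b_y)^T$. The new crossings of $3^+4^+$ and $4^+2^-$ with $l_a$ are therefore invisible modulo $H$. Your key claim is equivalent to this: the $a$-vector in the $Z_{2^+3^+}$ row lies on opposite sides of the line $\mathbb{R}(b_x,b_y)^T$ in the two arrangements. It is decided at the edge $2^+3^+$, about which you only remark that it still crosses $l_a$.

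The paper settles it as follows. From Figure~\ref{fig15}, in $\mathscr{A}_2$ one has $Z_{2^+3^+}=-a(a'_x,a'_y)^T+b(b_x,b_y)^T$. The components $Z_{3^+4^+}$ and $Z_{4^+2^-}$ pick up $+a(a'_x,a'_y)^T$ and $-a(a'_x,a'_y)^T$ respectively. The first two rows of $M_2$ vanish outside the first two columns, so $M_2$ is block triangular and
$\det M_2=\det\begin{pmatrix}-a'_x&b_x\\-a'_y&b_y\end{pmatrix}\det\begin{pmatrix}c_x&d_x\\c_y&d_y\end{pmatrix}\det\begin{pmatrix}e_x&f_x\\e_y&f_y\end{pmatrix}$.
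The first factor is the signed sine of the angle at the $a$--$b$ corner of the parallelogram crossed by $2^+3^+$. Its magnitude comes from Equation~\ref{eq1} applied to the lune of $l_a,l_b$ in $\mathscr{A}_2$. Its sign is opposite to that of $\det\begin{pmatrix}a_x&b_x\\a_y&b_y\end{pmatrix}>0$. This gives $\det M_2=-\sin(\delta_4/2)\sin^2(\delta_1/2)<0<\det M_1$. You need to supply this orientation check, or an equivalent explicit one, for the argument to be complete.
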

\begin{proof}

    Let $(Z_{2^+3^+}$, $Z_{3^+4^+}$, $Z_{4^+2^-})$ be the local frame defined on $\mathcal{O}_1$ in the proof of Lemma \ref{lem1}. It is also defined in the interior of $\sigma_a$, since every element there still represents a surface in $\mathcal{C}_8(\delta_1, \delta_2, \delta_3, \delta_4)$.

    In Lemma \ref{lem1}, this local frame is obtained via unfolding every surface to a polygon $\mathcal{P}_{\mathscr{S}^+}$. When we think of this unfolding as the restriction of a developing map, every vertex of $\mathcal{P}_{\mathscr{S}^+}$ comes from a path on $\mathscr{S}$ we develop along from a base-point of $\mathscr{S}$ to a vertex of $\mathscr{S}$. When $\mathscr{S}$ moves across the interior of $\sigma_a$ into $\mathcal{O}_2$, we continuously vary this path with $\mathscr{S}$ without changing its homotopy class. This allows us to extend the definition of $(Z_{2^+3^+}$, $Z_{3^+4^+}$, $Z_{4^+2^-})$ to surfaces in $\mathcal{O}_2$, even though the actual cutting and unfolding may not always exist there. 

    In Figure \ref{fig15}, we visualize this local frame with $\mathscr{A}_1$ and $\mathscr{A}_2$. In $\mathscr{A}_1$, the orientations of all the loops are still the same as in Lemma \ref{lem1}. In $\mathscr{A}_2$, the orientations from $l_b$ to $l_f$ are the same as in $\mathscr{A}_1$. Finally, we orient $l_a$ in $\mathscr{A}_2$ according to the blue arrow marked on it.

    \begin{figure}[h]
    \centering    \includegraphics[width=0.7\textwidth]{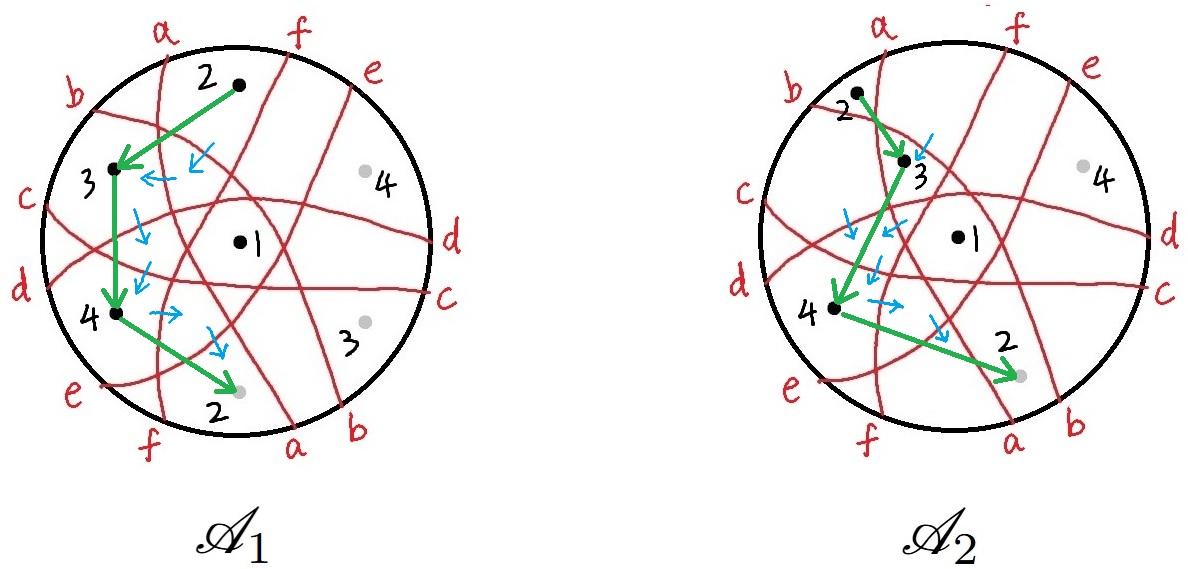}
    \caption{Visualizing the local frame $(Z_{2^+3^+}$, $Z_{3^+4^+}$, $Z_{4^+2^-})$ with $\mathscr{A}_1$ and $\mathscr{A}_2$.}
    \label{fig15}
    \end{figure}

    Let $\begin{pmatrix} b_x \\b_y\end{pmatrix}$, $\begin{pmatrix} c_x \\c_y\end{pmatrix}$, $\begin{pmatrix} d_x \\d_y\end{pmatrix}$, $\begin{pmatrix} e_x \\e_y\end{pmatrix}$, and $\begin{pmatrix} f_x \\f_y\end{pmatrix}$ be the unit vectors in Lemma \ref{lem1}. Then observe from Figure \ref{fig15} that there exists a constant unit vector $\begin{pmatrix} a'_x \\a'_y\end{pmatrix}$ such that we can write $Z_{2^+3^+}=-a\begin{pmatrix} a'_x \\a_y\end{pmatrix}+b\begin{pmatrix} b_x \\b_y\end{pmatrix}$, $Z_{3^+4^+}=a\begin{pmatrix} a'_x \\a_y\end{pmatrix}+c\begin{pmatrix} c_x \\c_y\end{pmatrix}+d\begin{pmatrix} d_x \\d_y\end{pmatrix}$ and $Z_{4^+2^-}=-a\begin{pmatrix} a'_x \\a_y\end{pmatrix}+e\begin{pmatrix} e_x \\e_y\end{pmatrix}+f\begin{pmatrix} f_x \\f_y\end{pmatrix}$ in terms of $a$, $b$, $\dots$, $f$. Here $\begin{pmatrix} a'_x\\ a'_y \end{pmatrix}$ is different from $\begin{pmatrix} a_x\\ a_y \end{pmatrix}$ since the loop $l_a$ has been changed.

    Thus, let $\psi_2$ be the map defined on $(0,\infty)^6$ that sends every vector $(a,b,c,d,e,f)$ to $(Z_{2^+3^+}$, $Z_{3^+4^+}$, $Z_{4^+2^-})$ via surfaces in $\mathcal{O}_2$. Then we have

    \begin{equation*}
        \psi_2(a,b,c,d,e,f)=\begin{pmatrix} -a'_x & b_x & 0& 0 & 0 & 0\\ -a'_y & b_y & 0 & 0 & 0 & 0 \\ a'_x & 0 & c_x & d_x & 0 & 0\\ a'_y & 0 & c_y & d_y & 0 & 0 \\ -a'_x & 0 & 0 & 0 & e_x & f_x \\ -a'_y & 0 & 0 & 0 & e_y & f_y \end{pmatrix} \begin{pmatrix} a \\ b \\ c \\ d \\e \\ f  
    \end{pmatrix}
    \end{equation*}  

    Denote the matrix in the above equation by $M_2$. Then

    $$\text{det}(M_2)=\begin{vmatrix}-a'_x & b_x \\ -a'_y & b_y\end{vmatrix}\cdot\begin{vmatrix}c_x & d_x \\ c_y & d_y\end{vmatrix}\cdot\begin{vmatrix}e_x & f_x \\ e_y & f_y\end{vmatrix}=-\sin({\frac{\delta_4}{2}})\sin^2({\frac{\delta_1}{2}})<0$$

    where the determinants of the three $2\times 2$ matrices can be calculated using Equation \ref{eq1}.

    Let $\psi_1$ and $M_1$ be defined as in Lemma \ref{lem1}. Suppose that there exists a surface $\mathscr{S}$ near $\sigma_a$ in ${\mathcal{O}_1}\cap{\mathcal{O}_2}$. This surface has (probably different) coordinates in $(0,\infty)^6$ with respect to both $\mathscr{A}_1$ and $\mathscr{A}_2$, which are mapped to the same image $(Z_{2^+3^+}$, $Z_{3^+4^+}$, $Z_{4^+2^-})$ by $\psi_1$ and $\psi_2$, respectively. We can then use Cramer's Rule to compute and compare its $a$-coordinates with respect to $\mathscr{A}_1$ and $\mathscr{A}_2$.
    
    Since $M_1$ and $M_2$ differ only by their first column, we obtain the same matrix by replacing their first columns with $(Z_{2^+3^+}$, $Z_{3^+4^+}$, $Z_{4^+2^-})$. However, the determinants of $M_1$ and $M_2$ have opposite signs. By Cramer's Rule, it is impossible that $\mathscr{S}$ has positive $a$-coordinates with respect to both $\mathscr{A}_1$ and $\mathscr{A}_2$, which contradicts the fact that $\mathscr{S}\in{\mathcal{O}_1}\cap{\mathcal{O}_2}$. Therefore, $\overline{\mathcal{O}_1}$ and $\overline{\mathcal{O}_2}$ are on different sides of $\sigma_a$.
\end{proof}

\begin{remark}
    It turns out that the proof of Lemma \ref{lem1} and Lemma \ref{lem2} works regardless of the local frame we choose. Suppose that we choose a different local frame. Then by definition of local frames, the change-of-coordinate map from $(Z_{2^+3^+}$, $Z_{3^+4^+}$, $Z_{4^+2^-})$ to the new local frame is given by an invertible matrix. Therefore, we just multiply this matrix to the left of $M_1$ and $M_2$. Then we still obtain two invertible matrices whose determinants have opposite signs.  
\end{remark}

Finally, we turn to the proof that every polyhedral surface in $\mathcal{C}_8(\delta_1, \delta_2, \delta_3, \delta_4)$ can be decomposed into parallelograms. It suffices for us to prove this result for $\overline{\mathcal{M}_8}(\delta_1, \delta_2, \delta_3, \delta_4)$, the metric completion of its moduli space.

Given a space $\mathcal{O}$ arising from a loop arrangement $\mathscr{A}$, we can consider the space of polyhedral surfaces in $\mathcal{O}$ with surface area $1$. We call this space the \textbf{moduli space arising from $\mathscr{A}$}, and denote it by $\Delta$. Let $\overline{\Delta}\subseteq \overline{\mathcal{M}_8}(\delta_1, \delta_2, \delta_3, \delta_4)$ be the metric completion of $\Delta$ with respect to the real hyperbolic metric in Theorem \ref{thm2}. We denote a face of $\overline{\Delta}$ by $\sigma_i({1})$ (where $1$ refers to the surface area) if it is a subset of the face $\sigma_i$ in $\overline{\mathcal{O}}$. If $\mathscr{A}'$ is another loop arrangement that differs from $\mathscr{A}$ by $l_i$ only, we say that $\overline{\Delta}$ and $\overline{\Delta'}$ are \textbf{on different sides of $\bm{\sigma_i({1})}$} if and only if $\overline{\mathcal{O}}$ and $\overline{\mathcal{O}'}$ are on different sides of $\sigma_i$.

Let $\overline{\Delta_1}$ be the metric completion of the moduli space arising from $\mathscr{A}_1$.

\begin{lemma} \label{lem3}
    The space $\overline{\Delta_1}$ has the structure of a real hyperbolic regular ideal $5$-simplex.
\end{lemma}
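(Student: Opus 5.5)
The plan is to compute the surface area explicitly in the coordinates $(a,b,c,d,e,f)$ with respect to $\mathscr{A}_1$, and then read off the hyperbolic structure in the projective (Klein) model. Every parallelogram in $\mathscr{D}_1^*$ is dual to an intersection point of two loops $l_i,l_j$. Since two great circles meet in exactly two antipodal points, each of the $\binom{6}{2}=15$ pairs of loops contributes exactly two congruent parallelograms, giving $30=2\binom{6}{2}$ in total. Each such parallelogram has side lengths $x_i,x_j$ and angle $\theta_{ij}$ given by Equation \ref{eq1}. Hence the area is
\begin{equation*}
\mathrm{Area}(F_1(x))=Q(x)=\sum_{i<j} c_{ij}\,x_ix_j,\qquad c_{ij}=2\sin\theta_{ij}>0,
\end{equation*}
a quadratic form with vanishing diagonal and strictly positive off-diagonal coefficients. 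By Lemma \ref{lem1} and the homeomorphism $F_1:[0,\infty)^6\to\overline{\mathcal{O}_1}$, these coordinates are genuine real coordinates on $\mathcal{O}_1$. Theorem \ref{thm2} tells us that the area has signature $(1,5)$ in any local real coordinates, so $Q$ is a Lorentzian form on $\mathbb{R}^6$.

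Next I will identify $\Delta_1$ geometrically. The open orthant $(0,\infty)^6$ lies in the positive cone $\{Q>0\}$, since every term of $Q$ is positive there. Projectivizing, $\Delta_1$ is the image in $\mathbb{H}^5$ of the open simplex spanned by the six rays through the basis vectors $e_a,\dots,e_f$, with facets lying on the hyperplanes $x_i=0$, i.e.\ the faces $\sigma_i(1)$. Because $Q(e_i)=0$, each vertex of this simplex is a null vector and so lies on $\partial\mathbb{H}^5$. Thus $\Delta_1$ is the interior of an ideal $5$-simplex. Points in the interiors of the faces $\sigma_i(1)$, and more generally in the lower-dimensional faces away from the vertices, are at finite hyperbolic distance and so belong to the metric completion. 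The ideal vertices, by contrast, are at infinite distance and are not added. This identifies $\overline{\Delta_1}$ with the closed ideal simplex minus its ideal vertices, which is the usual meaning of an ideal simplex.

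The remaining and main step is regularity. An ideal simplex with null vertices $e_i$ is determined up to isometry by the "Gram" numbers $c_{ij}$ modulo rescaling $e_i\mapsto\lambda_ie_i$, which replaces $c_{ij}$ by $\lambda_i\lambda_jc_{ij}$. It is therefore regular if and only if there exist $\lambda_i>0$ with $\lambda_i\lambda_jc_{ij}$ independent of $(i,j)$. Equivalently, every cross-ratio $c_{ij}c_{kl}/(c_{ik}c_{jl})$ must equal $1$, since regularity is equivalent to the full symmetric group $S_6$ acting by isometries. I will compute each $\theta_{ij}$ by inspecting, in Figure \ref{fig6}, which labeled vertices lie in the relevant lune between $l_i$ and $l_j$. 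In $\mathscr{A}_1$ the six loops should correspond to the six ways of separating the four antipodal pairs, so that each lune contains at most one pair's representative. I expect the structure to yield a factorization $c_{ij}=\kappa\,s_{\alpha(i)}s_{\alpha(j)}$ (with $s_p$ products of $\sin(\delta_p/2)$ or similar), or failing that, to verify the cross-ratio identities directly using $\sum\delta_n=2\pi$ and $\sin(\pi-x)=\sin x$.

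The danger is that for general $\delta$ the cross-ratios are not all $1$. In that case I would fall back to checking regularity in the case relevant to the abstract, $\delta_n=\pi/2$. There every $\theta_{ij}$ is $\pi/2$ or a multiple of $\pi/4$, and the required symmetry can be exhibited concretely by the combinatorial automorphisms of $\mathscr{A}_1$ that permute the loops and preserve the $c_{ij}$ after rescaling. This computation of the fifteen angles and the rescaling is where I expect the real work to lie.
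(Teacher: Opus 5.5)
The setup in your first two steps is correct and more explicit than the paper's. Every parallelogram is dual to a crossing of two \emph{distinct} loops, so $Q$ has no diagonal terms. Hence $Q(e_i)=0$ makes the six vertices ideal. Since $Q>0$ on the closed orthant minus the coordinate rays, the completion adds exactly the faces other than the vertices. The facets lie on the totally geodesic hyperplanes $x_i=0$. The paper reaches the same conclusions via Ziegler's criterion and the hyperboloid picture.

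The gap is the regularity step, and it cannot be closed. Because the loops are great circles, each one is determined up to homotopy by the set $S\subseteq\{1,2,3,4\}$, taken up to complement, of indices $n$ with $n^+$ on a fixed side. Equation \ref{eq1} then gives $c_{ij}=2\sin\bigl(\frac12\sum_{n\in S_i\triangle S_j}\delta_n\bigr)$. This already refutes your guess that a lune contains at most one pair: the lune between $\{k\}$ and $\{l\}$ contains $k^+$ and $l^-$. For $\mathscr{A}_1$ the six classes are $P_k=\{k\}$ and $R_k=\{1,k\}$, $k=2,3,4$. This identification is forced by $\det M_1=\sin^3(\delta_1/2)$ in Lemma \ref{lem1}, by $\det M_2$ in Lemma \ref{lem2}, and by the $D_6$ of Corollary \ref{cor2}. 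So, with $\{k,l,m\}=\{2,3,4\}$,
\[
c(P_k,R_k)=2\sin\frac{\delta_1}{2},\qquad c(P_k,P_l)=c(R_k,R_l)=2\sin\frac{\delta_k+\delta_l}{2},\qquad c(P_k,R_l)=2\sin\frac{\delta_m}{2}.
\]

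Your own criterion settles the question against regularity. The cross-ratio $\frac{c(P_k,R_k)\,c(P_l,R_l)}{c(P_k,R_l)\,c(P_l,R_k)}=\frac{\sin^2(\delta_1/2)}{\sin^2(\delta_m/2)}$ equals $1$ for all $m$ only when every $\delta_n=\pi/2$, because the $\delta_n/2$ lie in $(0,\pi)$ and sum to $\pi$. In that case $\frac{c(P_k,P_l)\,c(R_k,R_l)}{c(P_k,R_k)\,c(P_l,R_l)}=1/\sin^2\frac{\pi}{4}=2$. So no rescaling makes the $c_{ij}$ constant, for any choice of cone-deficits: $\overline{\Delta_1}$ is an ideal $5$-simplex but not a regular one. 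The other conceivable choice of six loop classes is not regular either.

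At $\delta_n=\pi/2$ the Gram data are $2$ on the six pairs inside $\{P_k\}$ or inside $\{R_k\}$, and $\sqrt2$ on the nine mixed pairs. The dihedral angles are $2\pi/3$ and $\pi/4$ rather than $\arccos\frac14$, and the isometry group has order $72$, not $720$.

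Your fallback fails as well. The relabelings of $\mathscr{A}_1$ form only $D_6$, of order $12$, so they cannot supply all of $S_6$. And checking $\delta_n=\pi/2$ alone would not prove a statement made for arbitrary $\delta$.

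The paper's proof does not contain a missing idea you could borrow here; it only asserts ``regular due to symmetry.'' What both arguments actually establish is the ideal-simplex structure, with the explicit Gram data above in place of regularity.
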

\begin{proof}
     Combinatorically, $\overline{\Delta_1}$ is a polytope with six vertices and six facets. Each vertex of $\overline{\Delta_1}$ has five vanishing coordinates among six with respect to $\mathscr{A}_1$. This corresponds to the case where the surface degenerates to a line. Note that each vertex of $\overline{\Delta_1}$ belongs to five facets of $\overline{\Delta_1}$, and each facet contains five vertices. Therefore, $\overline{\Delta_1}$ is a $5$-simplex by Proposition 2.16 in \cite{Zieg}. 
     
     Geometrically, we show that $\Delta_1$ is bounded by totally geodesic hyperplanes. Consider the set $F_1^{-1}(\Delta_1)$ in $(0,\infty)^6$. According to Theorem \ref{thm2}, by some linear transformation, $F_1^{-1}(\Delta_1)$ is mapped to a subset in the paraboloid $x_1^2-x_2^2-\dots-x_6^2=1$. This subset is bounded by the intersections of the paraboloid with six planes through the origin, which are the images of the coordinate planes under this linear transformation. Therefore, this subset and hence $\Delta_1$ is bounded by totally geodesic hyperplanes.

     Finally, $\overline{\Delta_1}$ is ideal because we have seen that its vertices do not belong to the real hyperbolic space $\mathbb{H}^5$. It is regular due to symmetry. This completes the proof of the result.
\end{proof}

By Lemma \ref{lem1} and Lemma \ref{lem2}, we also obtain the following result:

\begin{cor} \label{cor1}
    Every element in ${\Delta_1}$ or in the interior of a face of $\overline{\Delta_1}$ has an open neighborhood in $\mathcal{M}_8(\delta_1, \delta_2, \delta_3, \delta_4)$.
\end{cor}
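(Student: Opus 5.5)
We treat the two cases of the statement separately, working first in the unnormalized space $\overline{\mathcal{O}_1}$ and then passing to area $1$. The passage is harmless: the area is a homogeneous quadratic function that is positive on $\mathcal{C}_8(\delta_1,\delta_2,\delta_3,\delta_4)$. So an open neighborhood of a surface in $\mathcal{C}_8(\delta_1, \delta_2, \delta_3, \delta_4)$, made invariant under scaling, meets the area-$1$ slice in an open neighborhood of the corresponding point of $\mathcal{M}_8(\delta_1, \delta_2, \delta_3, \delta_4)$.

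\textbf{Elements of $\Delta_1$.} An element of $\Delta_1$ is a surface in $\mathcal{O}_1$. By Lemma \ref{lem1}, $\mathcal{O}_1$ is open in $\mathcal{C}_8(\delta_1, \delta_2, \delta_3, \delta_4)$. It is also invariant under scaling, since multiplying all of $a,\dots,f$ by $t>0$ scales the surface. Hence $\Delta_1=\mathcal{O}_1\cap\{\text{area}=1\}$ is open in $\mathcal{M}_8(\delta_1, \delta_2, \delta_3, \delta_4)$, and $\Delta_1$ itself is the required neighborhood.

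\textbf{Elements in the interior of a face.} Consider a point $\mathscr{S}$ in the interior of the face $\sigma_i(1)$. Its $i$-coordinate vanishes and the other five are positive, so $\mathscr{S}$ lies outside the codimension-two boundary. Therefore $\mathscr{S}\in\mathcal{C}_8(\delta_1, \delta_2, \delta_3, \delta_4)$, and the local frame $(Z_{2^+3^+},Z_{3^+4^+},Z_{4^+2^-})$ is defined near it. The plan is to produce an adjacent arrangement $\mathscr{A}'$ differing from $\mathscr{A}_1$ only in $l_i$. For $i=a$ this is $\mathscr{A}_2$, obtained by moving vertices across $l_a$. For the other loops the same construction applies by the symmetry of $\mathscr{A}_1$, which is regular by Lemma \ref{lem3}; equivalently, one moves the vertices lying in the appropriate small lune across $l_i$.

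Both $\psi_1$ (with matrix $M_1$) and $\psi'$ (with matrix $M'$) are defined on closed orthants. The two matrices differ only in the $i$-th column, and $\det M'$ and $\det M_1$ have opposite signs, exactly as in Lemma \ref{lem2}. The images $\psi_1([0,\infty)^6)$ and $\psi'([0,\infty)^6)$ are closed simplicial cones in $\mathbb{R}^6$. They share the facet spanned by the five common columns, and that facet is the image of $\sigma_i$. By Cramer's rule, the sign of the $i$-th coordinate of a vector $Z$ is the sign of a determinant in which the $i$-th column of the matrix is replaced by $Z$. This replaced determinant is the same for $M_1$ and $M'$, while the original determinants have opposite signs. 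Hence the two cones lie in opposite closed half-spaces of the hyperplane containing the common facet.

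The union of the two cones therefore contains a neighborhood of any point in the relative interior of that facet. In particular it contains a neighborhood of the frame image of $\mathscr{S}$, since $\mathscr{S}$ has all other coordinates strictly positive.

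The frame is a local homeomorphism from a neighborhood of $\mathscr{S}$ in $\mathcal{C}_8(\delta_1, \delta_2, \delta_3, \delta_4)$ to $\mathbb{R}^6$. Pulling back, a small neighborhood of $\mathscr{S}$ is contained in $\overline{\mathcal{O}_1}\cup\overline{\mathcal{O}'}$. This union, taken near $\mathscr{S}$ and normalized to area $1$, is the desired open neighborhood in $\mathcal{M}_8(\delta_1, \delta_2, \delta_3, \delta_4)$.

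\textbf{Main obstacle.} The delicate step is justifying that the frame, extended by continuation of the developing paths as in Lemma \ref{lem2}, really is injective near $\mathscr{S}$ on both sides. Equivalently, one must show that the surfaces parametrized by $\mathcal{O}'$ near $\sigma_i$ are exactly those whose frame values fall in the second cone. Here I would invoke Theorem \ref{thm1}: the frame gives a local chart of $\mathcal{C}_8(\delta_1, \delta_2, \delta_3, \delta_4)$ at $\mathscr{S}$. Since $\psi'$ is an invertible linear map, $F'$ composed with the frame is a homeomorphism onto its cone, which gives the identification. A secondary obstacle is checking the existence of the adjacent arrangement for every one of the six loops, not only for $l_a$; this I would settle by the combinatorial symmetry of $\mathscr{A}_1$.
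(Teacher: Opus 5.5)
Your proposal is correct and follows essentially the same route as the paper. The $\Delta_1$ case comes from Lemma \ref{lem1}. For a point in the interior of a face, you reduce by symmetry to $\sigma_a$ and $\mathscr{A}_2$, then use the opposite-sign determinants from Lemma \ref{lem2} to glue the two half-neighborhoods from $\overline{\Delta_1}$ and the adjacent simplex into a full neighborhood.

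Your cone version (two simplicial cones in frame coordinates, in opposite closed half-spaces of their common facet's hyperplane) makes explicit what the paper compresses into ``two half-balls that do not intersect outside $\sigma_a(1)$, so their union is an open ball.'' One small correction: the symmetry you need for the reduction is the combinatorial symmetry of $\mathscr{A}_1$, as in the paper's ``without loss of generality.'' It is not the regularity in Lemma \ref{lem3}, which the paper itself derives from that symmetry.
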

\begin{proof}
    Let $x$ be an element in $\overline{\Delta_1}$. If $x\in \Delta_1$, the result follows from Lemma \ref{lem1}. If $x$ is in the interior of a face of $\overline{\Delta_1}$, without loss of generality, we assume that this face is $\sigma_a(1)$. 
    
    Let $\Delta_2$ be the moduli space arising from $\mathscr{A}_2$ and $\overline{\Delta_2}$ be its metric completion. We can choose a small $\epsilon$-neighborhood of $x$ in both $\overline{\Delta_1}$ and $\overline{\Delta_2}$. When $\epsilon$ is sufficiently small, the two neighborhoods are both open half-balls with radius $\epsilon$, and they have no intersection outside $\sigma_a(1)$ by Lemma \ref{lem2}. Therefore, their union is an open ball in $\mathcal{M}_8(\delta_1, \delta_2, \delta_3, \delta_4)$ containing $x$.
\end{proof}


Similar to $\mathscr{A}_2$, by permuting the the labels of vertices and loops in $\mathscr{A}_1$, we can obtain more loop arrangements that are different from but combinatorically equivalent to $\mathscr{A}_1$. It is clear that there are finitely many such loop arrangements. We consider the metric completions of the moduli spaces arising from these loop arrangements. For convenience, we will call them ``$5$-simplices'' due to Lemma \ref{lem3}. Later, we are going to construct infinitely many copies of such $5$-simplices and regard them as distinct spaces, though some of them arise from the same loop arrangement. Our goal is to glue these $5$-simplices to form a branched cover of $\overline{\mathcal{M}_8}(\delta_1, \delta_2, \delta_3, \delta_4)$.


Let $\overline{X}$ be a space constructed in the following way:
\begin{itemize}
    \item Initially, $\overline{X}$ is just a copy of the $5$-simplex $\overline{\Delta_1}$. 
    

    \item  Suppose that $\overline{X}$ has a boundary face that is the face  $\sigma_i(1)$ of some $5$-simplex $\overline{\Delta}$. By Lemma \ref{lem2} and symmetry, we can construct a copy of some $5$-simplex $\overline{\Delta'}$ so that $\overline{\Delta}$ and $\overline{\Delta'}$ are on different sides of $\sigma_i(1)$. Then we glue this copy of $\overline{\Delta'}$ to $\overline{X}$ along $\sigma_i(1)$.

     
    \item We repeat the process above so that every face in $\overline{X}$ is incident to exactly two $5$-simplices, one on each side.
\end{itemize}

Let $X_2$ be the union of the codimension-two boundaries of the $5$-simplices in $\overline{X}$. Since every face in $\overline{X}$ is incident to exactly two $5$-simplices, $\overline{X}\setminus X_2$ is a manifold. There is a continuous map $\iota: \overline{X}\rightarrow \overline{\mathcal{M}_8}(\delta_1, \delta_2, \delta_3, \delta_4)$ such that, restricted to every $5$-simplex in $\overline{X}$, $\iota$ is the inclusion map by viewing the $5$-simplex as a subset of $\overline{\mathcal{M}_8}(\delta_1, \delta_2, \delta_3, \delta_4)$. Although $\overline{X}$ contains infinitely many $5$-simplices, there are finitely many loop arrangements involved, so $\iota(\overline{X})$ is a finite union of closed sets. Hence, $\iota(\overline{X})$ is closed in $\overline{\mathcal{M}_8}(\delta_1, \delta_2, \delta_3, \delta_4)$, which implies that $\iota(\overline{X})\setminus\iota(X_2)$ is closed in $\overline{\mathcal{M}_8}(\delta_1, \delta_2, \delta_3, \delta_4)\setminus\iota(X_2)$.

By Corollary \ref{cor1}, every surface in $\iota(\overline{X})\setminus\iota(X_2)$ has a neighborhood contained in $\overline{\mathcal{M}_8}(\delta_1, \delta_2, \delta_3, \delta_4)$. Since $\iota(X_2)$ has codimension two, the neighborhood is away from $\iota(X_2)$ if it is sufficiently small. Therefore, $\iota(\overline{X})\setminus\iota(X_2)$ is also open in $\overline{\mathcal{M}_8}(\delta_1, \delta_2, \delta_3, \delta_4)\setminus\iota(X_2)$. Finally, $\overline{\mathcal{M}_8}(\delta_1, \delta_2, \delta_3, \delta_4)\setminus\iota(X_2)$ is non-empty due to their different dimensions, so we have shown that $\iota(\overline{X})\setminus\iota(X_2)=\overline{\mathcal{M}_8}(\delta_1, \delta_2, \delta_3, \delta_4)\setminus\iota(X_2)$.

Thus, every surface in $\overline{\mathcal{M}_8}(\delta_1, \delta_2, \delta_3, \delta_4)\setminus\iota(X_2)$ has a parallelogram decomposition. By definition of $X_2$, every surface in $\iota(X_2)$ has a parallelogram decomposition as well. The decomposition is invariant under the antipodal map by our construction. In addition, there are $6$ loops in each loop arrangement, every two of which meet at two antipodal points since they are great circles. In conclusion, we have proved the following result:

\begin{theorem} \label{thm4}
    Every surface in $\overline{\mathcal{M}_8}(\delta_1, \delta_2, \delta_3, \delta_4)$ can be decomposed into at most $2\binom{6}{2}=30$ parallelograms, and the decomposition is invariant under the antipodal map.
\end{theorem}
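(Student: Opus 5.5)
The argument just before the statement already does most of the work; the plan is to organize it into three steps. First, the covering statement. Build the glued complex $\overline{X}$ of copies of $5$-simplices $\overline{\Delta}$ coming from loop arrangements combinatorially equivalent to $\mathscr{A}_1$, together with the map $\iota:\overline{X}\to\overline{\mathcal{M}_8}(\delta_1,\delta_2,\delta_3,\delta_4)$. Then show that $\iota(\overline{X})\setminus\iota(X_2)$ is both open and closed in $\overline{\mathcal{M}_8}(\delta_1,\delta_2,\delta_3,\delta_4)\setminus\iota(X_2)$.

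\textbf{Closedness.} Only finitely many loop arrangements are involved, and each $\overline{\Delta}$ is closed by Lemma \ref{lem3}. Hence $\iota(\overline{X})$ is a finite union of closed sets and is closed.

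\textbf{Openness.} Use Corollary \ref{cor1} for points of $\Delta$ and for interior points of faces. This rests on Lemma \ref{lem1} and Lemma \ref{lem2}, which say that the two simplices meeting along $\sigma_i(1)$ lie on opposite sides. Their union is therefore a full ball, since the determinants of $M_1$ and $M_2$ have opposite signs.

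\textbf{Connectedness.} Since $\iota(X_2)$ has codimension two, removing it does not disconnect the $5$-dimensional hyperbolic manifold $\mathcal{M}_8$ (or its completion). A nonempty set that is both open and closed in this complement is therefore the whole complement.

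Second, the points of $\iota(X_2)$ need separate treatment. I plan to handle them directly. Every such point lies in some closed simplex $\overline{\Delta}$, so it is $F(a,\dots,f)$ for nonnegative coordinates with at least two zeros. Deleting the loops whose coordinates vanish gives a decomposition of the surface by the parallelograms of the remaining loops, with degenerate (zero-width) strips collapsed. This is still a parallelogram decomposition, now with possibly fewer pieces. Points of $\overline{\mathcal{M}_8}$ that lie in the completion but outside $\mathcal{M}_8$ are likewise limits inside some closed simplex, and they receive the same collapsed decomposition.

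Third, the symmetry and the count. The parallelograms are dual to the vertices of the arrangement $\mathscr{A}$. Because every loop is a great circle, the antipodal map of $S^2$ preserves $\mathscr{A}$ and hence its dual graph $\mathscr{D}^*$. The edge lengths depend only on the loop, so the induced map on the surface is the antipodal map, and the decomposition is invariant under it. The number of parallelograms equals the number of vertices of $\mathscr{A}$. Each pair of the $6$ loops meets in exactly $2$ antipodal points, and no three loops are concurrent, so there are $2\binom{6}{2}=30$ vertices. Collapsing degenerate parallelograms only lowers this number, which gives the bound ``at most $30$''.

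I expect the main obstacle to be justifying the connectedness step rigorously. One has to check that $\iota(X_2)$ really has codimension at least two in $\overline{\mathcal{M}_8}$, and that removing it leaves a connected space. A further point is that the metric completion may be an orbifold rather than a manifold, so ``open'' must be understood near boundary-type points. My first approach would be to argue on the incomplete manifold $\mathcal{M}_8$: it is connected, because the configuration space of convex centrally symmetric polyhedra is connected, and it is a manifold, so a closed subset of codimension two does not separate it. I would then extend to the completion by density, since every completion point is a limit of points of $\mathcal{M}_8$ and $\iota(\overline{X})$ is closed.
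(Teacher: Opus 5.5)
Your proposal is essentially the paper's own proof: the glued complex $\overline{X}$, closedness of $\iota(\overline{X})$ because only finitely many loop arrangements occur, openness away from $\iota(X_2)$ via Corollary~\ref{cor1} (that is, Lemmas~\ref{lem1} and~\ref{lem2}), the points of $\iota(X_2)$ handled by the degenerate decompositions inside the closed simplices, and the count $2\binom{6}{2}$ from pairs of great circles. Your one improvement is connectedness, which the paper uses silently when it passes from ``open, closed and nonempty'' to $\iota(\overline{X})\setminus\iota(X_2)=\overline{\mathcal{M}_8}(\delta_1,\delta_2,\delta_3,\delta_4)\setminus\iota(X_2)$; your route (a closed set of dimension at most $3$ does not separate the connected $5$-manifold $\mathcal{M}_8$, then pass to the completion by density and closedness of $\iota(\overline{X})$) is the cleaner way to run that step, but your reason for the connectedness of $\mathcal{M}_8$ itself merely restates the claim and, as in the paper, still needs an actual argument.
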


In summary, we have established two methods to build coordinate charts on the space of polyhedral surfaces. One is through unfolding the surfaces, and the other is through decomposing them into parallelograms. Although the first method applies to more general convex polyhedral surfaces, the second method is also interesting because it sometimes provides a more concrete geometric description of moduli spaces. 

For instance, consider the space of centrally symmetric polyhedral surfaces with $8$ unlabeled vertices, whose cone-deficits all equal to $\frac{\pi}{2}$. In this unlabeled space, two surfaces are equivalent if they differ by a Euclidean isometry. Denote its unlabeled moduli space by $\mathcal{M}_8(\frac{\pi}{2})$, and its metric completion by $\overline{\mathcal{M}_8}(\frac{\pi}{2})$. The following result is a direct consequence of Lemma \ref{lem3}:

\begin{cor} \label{cor2}
    The space $\overline{\mathcal{M}_8}(\frac{\pi}{2})$ has the structure of the quotient space $\overline{\Delta_1}/\Gamma$, where $\overline{\Delta_1}$ is defined as before, and $\Gamma$ is isomorphic to the dihedral group $D_6$ with order $12$.
\end{cor}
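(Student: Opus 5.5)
The plan is to realize $\overline{\mathcal{M}_8}(\frac{\pi}{2})$ as the quotient of the labeled space $\overline{\mathcal{M}_8}(\frac{\pi}{2},\frac{\pi}{2},\frac{\pi}{2},\frac{\pi}{2})$ by the group $G$ of label permutations compatible with the antipodal map, and then to show that a single simplex $\overline{\Delta_1}$ is a fundamental domain up to its own stabilizer. The group $G$ consists of the signed permutations of $\{1,2,3,4\}$ modulo the global sign $\pm$, since $i^+\leftrightarrow i^-$ for all $i$ simultaneously is realized by the antipodal isometry. Hence $|G|=2^4\cdot 4!/2=192$. When all cone-deficits equal $\frac{\pi}{2}$, $G$ acts on the labeled moduli space by isometries of the real hyperbolic metric of Theorem \ref{thm2}, because the surface area and the coordinate charts are unchanged by relabeling. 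It also permutes the loop arrangements combinatorially equivalent to $\mathscr{A}_1$, and therefore permutes the $5$-simplices used in the construction of $\overline{X}$.

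First I will show that $G$ acts transitively on these $5$-simplices. Every simplex glued in the construction before Theorem \ref{thm4} arises from a relabeling of $\mathscr{A}_1$, so it is the image of $\overline{\Delta_1}$ under an element of $G$. Second, I will show that distinct $G$-translates of $\overline{\Delta_1}$ have disjoint interiors in the labeled space. This uses the covering argument: $\iota(\overline{X})\setminus\iota(X_2)$ fills the space, and adjacent simplices lie on different sides of their common face by Lemma \ref{lem2}. Consequently, every surface in the unlabeled space has a representative in $\overline{\Delta_1}$, and two points of $\overline{\Delta_1}$ represent the same unlabeled surface exactly when they differ by an element of $\Gamma=\mathrm{Stab}_G(\overline{\Delta_1})$. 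This gives $\overline{\mathcal{M}_8}(\frac{\pi}{2})\cong\overline{\Delta_1}/\Gamma$, and the isometry follows because $\Gamma$ acts by hyperbolic isometries.

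It remains to identify $\Gamma$. I will count the elements of $G$ preserving $\mathscr{A}_1$ up to equivalence by examining the combinatorics of $\mathscr{A}_1$: which vertex pairs are separated by which loops, that is, the bipartitions of $\{\pm1,\dots,\pm4\}$ cut out by the six great circles. The aim is to show that this stabilizer acts on the six loops, equivalently on the six ideal vertices of the simplex, as a dihedral group of order $12$. Concretely, I will look for a cyclic ordering of the six loops $l_a,\dots,l_f$ under which the loop pairs $\{a,b\},\{c,d\},\{e,f\}$ appearing in the frame of Lemma \ref{lem1} become adjacent. I will then exhibit a $6$-fold rotation and a reflection realized by signed label permutations. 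Because the action of $G$ on labeled surfaces is faithful, I also need to check that $\Gamma$ acts faithfully on $\overline{\Delta_1}$; this holds since a nontrivial element fixing all six coordinates would fix every surface in an open set.

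The main obstacle is this last step: determining the stabilizer exactly and verifying that it is $D_6$ and not some other group of order $12$, or a larger group. The orbit count gives $|\Gamma|=192/k$, where $k$ is the number of distinct simplices, so I would also count the loop arrangements combinatorially equivalent to $\mathscr{A}_1$ and check that $k=16$. As an independent check, I would use the fact that the volume of $\overline{\mathcal{M}_8}$ equals $k$ times the volume of a regular ideal $5$-simplex.
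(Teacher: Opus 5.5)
There is a genuine gap in your second step, the claim that two points of $\overline{\Delta_1}$ represent the same unlabeled surface exactly when they differ by an element of $\Gamma$. It has two problems.

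First, disjointness of the interiors of distinct relabelings of $\overline{\Delta_1}$ does not follow from what you cite. Lemma~\ref{lem2}, Corollary~\ref{cor1} and the open/closed argument before Theorem~\ref{thm4} only make $\iota$ a local homeomorphism onto the complement of $\iota(X_2)$. Since $\overline{X}$ contains infinitely many copies by design, this says nothing about injectivity. You still need a degree-one argument, for instance a count of the simplices through one point.

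Second, and more seriously, even granting disjoint interiors, the claim fails on the boundary:
\begin{itemize}
\item A point $x$ in the interior of a facet $\sigma_i(1)$ also lies on the neighbouring simplex $h\overline{\Delta_1}$ across that facet, with $h\in G\setminus\Gamma$. So $x$ and $h^{-1}x\in\partial\overline{\Delta_1}$ are the same unlabeled surface.
\item Suppose these identifications came from $\Gamma$. By finiteness and linearity of coordinate changes, a single $\gamma$ would then satisfy $h^{-1}x=\gamma x$ on the whole facet. Hence $\gamma^{-1}h^{-1}$ would fix $\sigma_i(1)$ pointwise.
\item Comparing the five edge-lengths of a generic surface on the facet, $\gamma^{-1}h^{-1}$ would fix the five surviving loop classes.
\item A relabeling acts on the eight loop classes (sign vectors in $\{\pm1\}^4$ modulo $\pm1$) as an affine map of $\mathbb{F}_2^3$. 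An affine map with five fixed points is the identity, and a relabeling acting trivially on classes is trivial in $G$. That forces $h\in\Gamma$, a contradiction.
\end{itemize}
Put differently, $\Gamma$ only permutes the vertices of the simplex. So a generic facet point has a half-ball neighbourhood in $\overline{\Delta_1}/\Gamma$, while the corresponding surface has no symmetry and has a full ball neighbourhood in the unlabeled space. Carried out correctly, your fundamental-domain argument gives $\overline{\Delta_1}$ modulo $\Gamma$ \emph{together with} facet pairings, so it cannot establish the statement as written. The paper's proof only exhibits the rotation $(a,b,c,d,e,f)\mapsto(c,d,e,f,b,a)$ and the reversal, and it passes over exactly this point.

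On identifying $\Gamma$, your counts are right: $k=16$, so $|\Gamma|=12$. However, the cyclic order you plan to look for does not exist. The pairs $\{a,b\},\{c,d\},\{e,f\}$ are the loop pairs whose lunes contain only $1^{\pm}$; this is why each block of $M_1$ has determinant $\sin\frac{\delta_1}{2}$. The two classes missing from $\mathscr{A}_1$ must then also differ only at label $1$. Hence $\Gamma$ fixes label $1$ up to sign, and these pairs form a block system for $\Gamma$. In the hexagonal action of $D_6$ the only blocks of size two are opposite vertices. Indeed, the paper's rotation is the $6$-cycle $(a\,f\,d\,b\,e\,c)$, in which $a,b$, $c,d$ and $e,f$ are opposite. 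Concretely, $\Gamma\cong S_3\times\mathbb{Z}_2$: it permutes labels $2,3,4$ and swaps $1^+\leftrightarrow1^-$.

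Your volume cross-check has two problems. It is equivalent to the degree-one statement you still need, so it is not independent. It also presupposes regularity, which fails. With all $\delta_n=\frac{\pi}{2}$ the area is $2\sum_{i<j}\sin\theta_{ij}\,x_ix_j$. Here $\sin\theta_{ij}$ equals $1$ or $\frac{\sqrt{2}}{2}$ according as a lune of the two loops contains two or an odd number of labeled vertices, and both values occur. No rescaling of the coordinates makes all off-diagonal entries equal, so $\overline{\Delta_1}$ is an ideal simplex but not a regular one.
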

\begin{proof}
    We can interpret $\overline{\mathcal{M}_8}(\frac{\pi}{2})$ as the space of equivalence classes in $\overline{\Delta_1}$. The main task is to figure out the equivalence relation. 

    Figure \ref{fig16} shows the front view of a parallelogram decomposition of a surface in $\Delta_1$, whose coordinates are $(a,b,c,d,e,f)$ with respect to $\mathscr{A}_1$. Some edge-lengths of the parallelograms are labeled. One can think of the coordinates as reading the edge-lengths consecutively in the red polygonal path in Figure \ref{fig16} whose direction is indicated by the red arrow. 

    \begin{figure}[h]
    \centering    \includegraphics[width=0.3\textwidth]{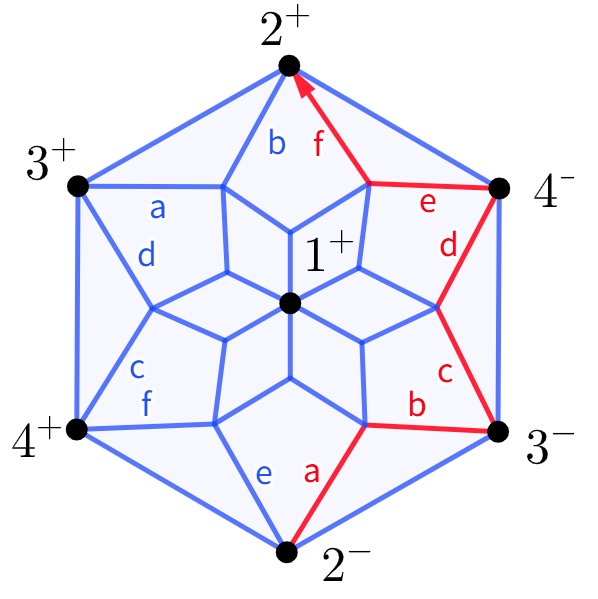}
    \caption{Parallelogram decomposition of a surface in $\Delta_1$ with coordinates $(a,b,c,d,e,f)$.}
    \label{fig16}
    \end{figure}

    In the unlabeled moduli space $\overline{\mathcal{M}_8}(\frac{\pi}{2})$, we don't distinguish vertices of the surfaces. Thus, if we rotate the parallelogram decomposition so that the red polygonal path starts at a different vertex, we get an equivalent surface in $\overline{\mathcal{M}_8}(\frac{\pi}{2})$ despite that their coordinates with respect to $\mathscr{A}_1$ are different. For example, $(a,b,c,d,e,f)$ and $(c,d,e,f,b,a)$ correspond to an equivalent surface in $\overline{\mathcal{M}_8}(\frac{\pi}{2})$. In addition, we can reflect the parallelogram decomposition so that the coordinates are read in reverse order. For example, $(a,b,c,d,e,f)$ and $(f,e,d,c,b,a)$ correspond to an equivalent surface in $\overline{\mathcal{M}_8}(\frac{\pi}{2})$. Thus, the group $\Gamma$ is isomorphic to the symmetry group of a regular hexagon, which is the dihedral group $D_6$.
\end{proof}

\section{Polyhedra with $10$ Vertices}

In this section, we study the space of centrally symmetric convex polyhedral surfaces with $10$ vertices with prescribed cone-deficits $\delta_1$, $\delta_2$, $\delta_3$, $\delta_4$ and $\delta_5$. Analogous to the last section, the goal of this section is to show that every surface in $\mathcal{C}_{10}(\delta_1, \delta_2, \delta_3, \delta_4, \delta_5)$ can be decomposed into parallelograms. Restricted to this section, a loop arrangement always has $8$ loops labeled by $l_a$, $l_b$, $l_c$, $\dots$, $l_g$, $l_h$ on a sphere with $8$ vertices labeled by $\pm 1$, $\pm 2$, $\dots$, $\pm 5$.

The main idea is still to construct spaces arising from different loop arrangements. This time, we consider the four loop arrangements $\mathscr{A}_1$, $\mathscr{A}_2$, $\mathscr{A}_3$, and $\mathscr{A}_4$ in Figure \ref{fig17}. Let ${\mathcal{O}_1}$, ${\mathcal{O}_2}$, ${\mathcal{O}_3}$, and ${\mathcal{O}_4}$ be the spaces arising from $\mathscr{A}_1$, $\mathscr{A}_2$, $\mathscr{A}_3$, and $\mathscr{A}_4$, respectively. In addition, we say that a space ${\mathcal{O}}$ has \textbf{Type 1} if it arises from a loop arrangement combinatorically equivalent to $\mathscr{A}_1$ up to relabeling vertices or loops. Similarly, we define spaces of \textbf{Type 2}, \textbf{Type 3}, and \textbf{Type 4}. Theorem \ref{thm3} implies that these spaces are subsets of $\mathcal{C}_{10}(\delta_1, \delta_2, \delta_3, \delta_4, \delta_5)$.

\begin{figure}[h]
    \centering    \includegraphics[width=0.95\textwidth]{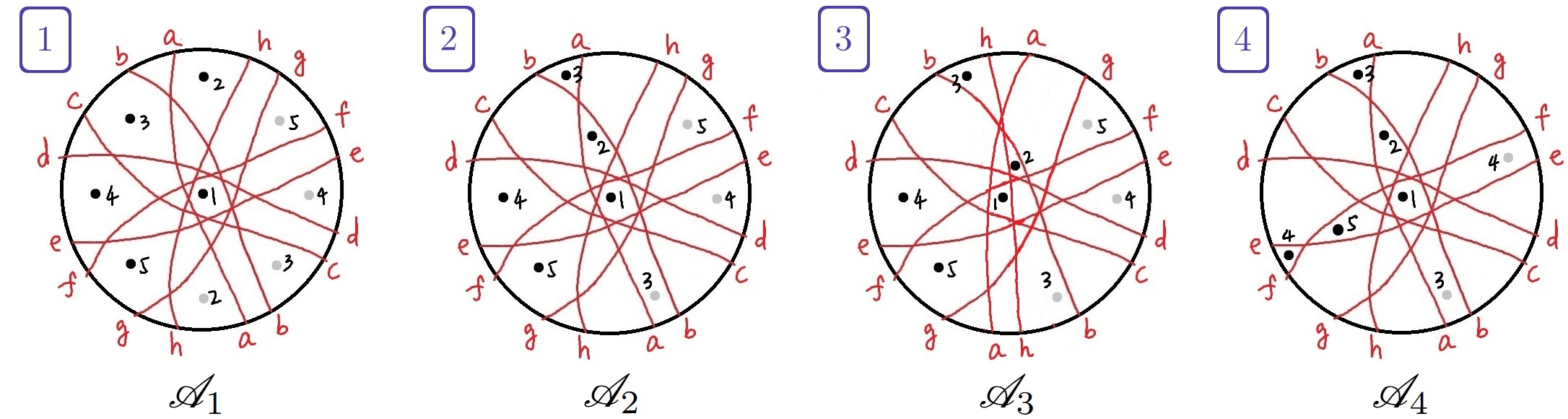}
    \caption{Loop arrangements $\mathscr{A}_1$, $\mathscr{A}_2$, $\mathscr{A}_3$, and $\mathscr{A}_4$. The numbers in boxes refer to the types of spaces arising from these loop arrangements.}
    \label{fig17}
\end{figure}

\begin{lemma} \label{lem4}
    Every Type 1 space is open in $\mathcal{C}_{10}(\delta_1, \delta_2, \delta_3, \delta_4, \delta_5)$.
\end{lemma}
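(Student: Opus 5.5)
By symmetry (relabeling vertices and loops), it suffices to treat $\mathcal{O}_1$, the space arising from $\mathscr{A}_1$. I would follow the proof of Lemma \ref{lem1} closely.

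Let $F_1$ be the map sending $(a,b,\dots,h)\in(0,\infty)^8$ to the surface obtained from $\mathscr{D}_1^*$ by giving length $i$ to every edge transverse to $l_i$. By Theorem \ref{thm3} its image $\mathcal{O}_1$ lies in $\mathcal{C}_{10}(\delta_1,\dots,\delta_5)$, and $F_1$ is clearly continuous. By Theorem \ref{thm1} this space has real dimension $8$. So by invariance of domain it is enough to show that $F_1$ is injective.

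To prove injectivity, I would build an explicit local frame, as in Figure \ref{fig13}. First choose, on the sphere carrying $\mathscr{A}_1$, an antipodally invariant cycle through four antipodal pairs, for instance $2^+3^+4^+5^+2^-3^-4^-5^-$, omitting the pair $\pm1$. Then choose a slit from $1^+$ to a vertex of this cycle. Cutting along these and unfolding gives a polygon $\mathcal{P}_{\mathscr{S}^+}$ that depends consistently on $\mathscr{S}\in\mathcal{O}_1$, since the green segments can be kept in fixed homotopy classes relative to the labeled vertices. Its four directed edges $Z_{2^+3^+}, Z_{3^+4^+}, Z_{4^+5^+}, Z_{5^+2^-}\in\mathbb{R}^2$ form a local frame by the construction in Theorem \ref{thm1}.

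Next, orient each loop. Each green edge crosses some set of loops, and each crossing contributes a term $\pm i\,u_i$, where $u_i$ is a constant unit vector. The direction of $u_i$ is determined by the angles from Equation \ref{eq1}, which are constant on $\mathcal{O}_1$. This gives the map
\[
\psi_1(a,\dots,h)=M_1\,(a,\dots,h)^T,
\]
with $M_1$ a constant $8\times 8$ matrix. Since $\psi_1$ factors through $F_1$, injectivity of $F_1$ follows once $\det M_1\neq 0$.

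The main obstacle is choosing the cycle and slit well. Ideally each green edge should cross exactly two loops, with each loop crossed exactly once. Then $M_1$ is block diagonal with four $2\times2$ blocks, exactly as in Lemma \ref{lem1}. Each block determinant is $\pm\sin\theta$, where $\theta$ is the parallelogram angle at the crossing of the two loops. That angle lies in $(0,\pi)$ by Equation \ref{eq1}, so the determinant is a product of nonzero sines.

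I would try first to read such a choice off the picture of $\mathscr{A}_1$ in Figure \ref{fig17}: find an antipodal pair of vertices whose separating loops partition into pairs along a cyclic order of the remaining vertices. If no block-diagonal choice exists, I would settle for a choice that makes $M_1$ block triangular after reordering loops. Then $\det M_1$ is still a product of $2\times2$ determinants of the form $\sin\theta$, and the Remark after Lemma \ref{lem2} shows that the choice of local frame does not affect the conclusion.

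With $\det M_1\neq0$, $\psi_1$ is invertible, so $F_1$ is injective. Invariance of domain then makes $\mathcal{O}_1$ open, and by relabeling every Type 1 space is open.
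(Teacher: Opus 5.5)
Your proposal is correct and is essentially the paper's proof. The paper uses exactly your frame $(Z_{2^+3^+}, Z_{3^+4^+}, Z_{4^+5^+}, Z_{5^+2^-})$, built from a cycle that omits $\pm1$. It reads off its picture of $\mathscr{A}_1$ that these four edges cross the loop pairs $\{a,b\}$, $\{c,d\}$, $\{e,f\}$, $\{g,h\}$ respectively, so $M_1$ is block diagonal with $\det M_1=\sin^4(\delta_1/2)>0$, and it then concludes by invariance of domain in real dimension $8$; your block-triangular fallback is therefore never needed.
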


\begin{proof}
    It suffices to prove the result for ${\mathcal{O}_1}$. The proof is similar to that of Lemma \ref{lem1}.

    In the first picture of Figure \ref{fig18}, we demonstrate a consistent way of unfolding surfaces in ${\mathcal{O}_1}$ by cutting along the edges of these surfaces represented by the green segments joining the labeled vertices on the sphere. This gives rise to a local frame $(Z_{2^+3^+}$, $Z_{3^+4^+}$, $Z_{4^+5^+}$, $Z_{5^+2^-})$ on $\mathcal{O}_1$, which is visualized in the second picture. We orient the loops in $\mathscr{A}_1$ according to the blue arrows. Then we can observe from Figure \ref{fig18} that there exist constant unit vectors $\begin{pmatrix}a_x \\a_y\end{pmatrix}$, $\begin{pmatrix}b_x \\b_y\end{pmatrix}$, $\dots$, $\begin{pmatrix}h_x \\h_y\end{pmatrix}$ in $\mathbb{R}^2$, using which we can express $Z_{2^+3^+}$, $Z_{3^+4^+}$, $Z_{4^+5^+}$ and $Z_{5^+2^-}$ in terms of $a$, $b$, $\dots$, $h$ as we did in Lemma \ref{lem1}. 

    \begin{figure}[h]
    \centering    \includegraphics[width=0.7\textwidth]{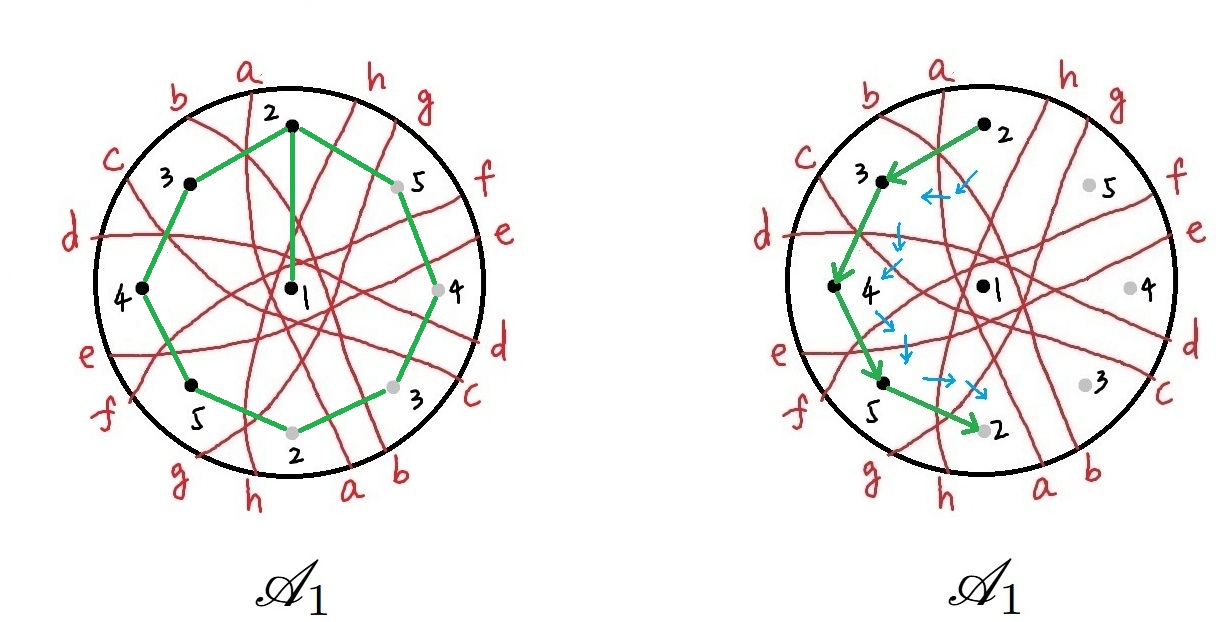}
    \caption{Visualization of the unfolding and the local frame it gives rise to with $\mathscr{A}_1$.}
    \label{fig18}
    \end{figure}
    
    Similar to the constructions in Section 4, let $F_1$ be the map that sends every vector $(a,b,\dots,g,h)\in(0,\infty)^8$ to a surface in $\mathcal{O}_1$. Let $\psi_1$ be the map that sends every vector $(a,b,\dots,g,h)\in(0,\infty)^8$ to $(Z_{2^+3^+}$, $Z_{3^+4^+}$, $Z_{4^+5^+}$, $Z_{5^+2^-})$ via unfolding surfaces in $\mathcal{O}_1$. Then we have  
    
    $$\psi_1(a,b,c,d,e,f,g,h)=\begin{pmatrix} a_x & b_x & 0& 0 & 0 & 0 & 0 & 0\\ a_y & b_y & 0 & 0 & 0 & 0 & 0 & 0 \\ 0 & 0 & c_x & d_x & 0 & 0 & 0 & 0\\ 0 & 0 & c_y & d_y & 0 & 0 & 0 & 0 \\ 0 & 0 & 0 & 0 & e_x & f_x & 0 & 0 \\ 0 & 0 & 0 & 0 & e_y & f_y & 0 & 0 \\ 0 & 0 & 0 & 0 & 0 & 0 & g_x & h_x \\ 0 & 0 & 0 & 0 & 0 & 0 & g_y & h_y\end{pmatrix} \begin{pmatrix} a \\ b \\ c \\ d \\e \\ f \\g \\h  
    \end{pmatrix}$$

    Denote the matrix in the above equation by $M_1$. Then 

    $$\text{det}(M_1)=\begin{vmatrix}a_x & b_x \\ a_y & b_y\end{vmatrix}\cdot\begin{vmatrix}c_x & d_x \\ c_y & d_y\end{vmatrix}\cdot\begin{vmatrix}e_x & f_x \\ e_y & f_y\end{vmatrix}\cdot\begin{vmatrix}g_x & h_x \\ g_y & h_y\end{vmatrix}=\sin^4({\frac{\delta_1}{2}})>0$$

    Thus, $\psi_1$ is continuous and invertible. Since $\psi_1$ is the composition of $F_1$ with another map, $F_1$ is continuous and injective on $(0,\infty)^8$. As $\mathcal{O}_1=F_1((0,\infty)^8)$ and ${\mathcal{C}_{10}}(\delta_1, \delta_2. \delta_3, \delta_4, \delta_5)$ has dimension $8$, the result follows from the invariance of domain. 
\end{proof}

Let $\overline{\mathcal{O}_1}$ be the closure of $\mathcal{O}_1$. Analogous to Section 4, we define the coordinates with respect to $\mathscr{A}_1$ and then the codimension-two boundary of $\overline{\mathcal{O}_1}$. Away from the codimension-two boundary of $\overline{\mathcal{O}_1}$, every element in $\overline{\mathcal{O}_1}$ represents a surface in ${\mathcal{C}_{10}}(\delta_1, \delta_2. \delta_3, \delta_4, \delta_5)$, and the local frame $(Z_{2^+3^+}$, $Z_{3^+4^+}$, $Z_{4^+5^+}$, $Z_{5^+2^-})$ in Lemma \ref{lem4} is defined.  

Next, we will establish two technical results that generalize Lemma \ref{lem2}. 

Let $\mathscr{A}$ and $\mathscr{A}'$ be two adjacent loop arrangements that differ by the loop $l_i$. Let ${\mathcal{O}}$ and ${\mathcal{O}'}$ be the spaces arising from $\mathscr{A}$ and $\mathscr{A}'$, respectively. Let $\overline{\mathcal{O}}$ and $\overline{\mathcal{O}'}$ be their closures. Suppose that there is a local frame defined on ${\mathcal{O}}$. Since $\overline{\mathcal{O}}$ and $\overline{\mathcal{O}'}$ are adjacent and meet on the face $\sigma_i$, the local frame on ${\mathcal{O}}$ can be extended continuously to ${\mathcal{O}'}$ through the interior of $\sigma_i$ in a way we described in Lemma \ref{lem2}. Let $\psi$ and $\psi'$ be the maps from $(0,\infty)^8$ to this local frame via surfaces in $\mathcal{O}$ and $\mathcal{O}'$, respectively. Let $M$ and $M'$ be the matrices representing $\psi$ and $\psi'$, respectively.

Let $Z_{TU}$ and $Z_{VW}$ be two components in the local frame above, where $T$, $U$, $V$ and $W$ are four labeled vertices. The first technical result involves $Z_{TU}$ only.

\begin{lemma} \label{lem5}
    Suppose that there exist constant unit vectors $\begin{pmatrix} i_x\\ i_y \end{pmatrix}$, $\begin{pmatrix} i'_x\\ i'_y \end{pmatrix}$ and $\begin{pmatrix} j_x\\ j_y \end{pmatrix}$ such that $Z_{TU}=i\begin{pmatrix} i_x\\ i_y \end{pmatrix}+j\begin{pmatrix} j_x\\ j_y \end{pmatrix}$ in the formula of $\psi$, and $Z_{TU}=i\begin{pmatrix} i'_x\\ i'_y \end{pmatrix}+j\begin{pmatrix} j_x\\ j_y \end{pmatrix}$ in the formula of $\psi'$. Then $\overline{\mathcal{O}}$ and $\overline{\mathcal{O}'}$ are on different sides of $\sigma_i$ if and only if the determinants $\begin{vmatrix}i_x & j_x \\ i_y & j_y\end{vmatrix}$ and $\begin{vmatrix}i'_x & j_x \\ i'_y & j_y\end{vmatrix}$ have opposite signs. 
\end{lemma}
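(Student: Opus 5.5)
We generalize the Cramer's Rule argument from Lemma \ref{lem2}. Since $\mathscr{A}$ and $\mathscr{A}'$ differ only by $l_i$, and the frame on $\mathcal{O}$ is extended to $\mathcal{O}'$ through the interior of $\sigma_i$ while keeping the developing paths in the same homotopy classes, every column of $M$ and $M'$ not indexed by $i$ agrees. Only the $i$-th columns differ, since only the directions of edges transverse to $l_i$ change. So $M$ and $M'$ differ only in their $i$-th column. Moreover, $M$ is invertible by the invariance-of-domain argument of Lemma \ref{lem4}, and $M'$ is invertible for the same reason. The claim then reduces to comparing the signs of $\det M$ and $\det M'$.

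\textbf{Step 1: opposite signs of $\det M$, $\det M'$ is equivalent to different sides.} Take a surface near the interior of $\sigma_i$ with frame value $Z$. By Cramer's Rule its $i$-coordinates are
\[
i=\det M_i(Z)/\det M,\qquad i'=\det M_i(Z)/\det M',
\]
where $M_i(Z)$ is the common matrix obtained by replacing the $i$-th column with $Z$. The other coordinates are close to those of a point in the interior of $\sigma_i$, so they stay positive on both sides. If the determinants have opposite signs, $i$ and $i'$ cannot both be positive, so $\mathcal{O}\cap\mathcal{O}'$ contains no surface near $\sigma_i$; this is exactly the argument of Lemma \ref{lem2}. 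If the determinants have the same sign, then for $Z$ near a point in the interior of $\sigma_i$ with $\det M_i(Z)$ of the appropriate sign, both coordinate vectors lie in $(0,\infty)^8$. The maps $\psi^{-1}$ and $\psi'^{-1}$ are linear and agree on $\sigma_i$, so such surfaces exist arbitrarily close to $\sigma_i$. Hence $\mathcal{O}$ and $\mathcal{O}'$ overlap there, and they are on the same side.

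\textbf{Step 2: reduce the sign comparison to the given $2\times 2$ determinants.} This is the main step. We cannot assume the matrices are block diagonal as in Lemma \ref{lem1}, because $l_i$ may contribute to several components of the frame. The plan is a column operation argument on the $i$-th column. Write $M=[\,\mathbf{m}_i \mid R\,]$ and $M'=[\,\mathbf{m}'_i \mid R\,]$ after reordering columns. Then
\[
\det M' - \lambda \det M = \det[\,\mathbf{m}'_i-\lambda\mathbf{m}_i \mid R\,]
\]
for any scalar $\lambda$, by multilinearity in the first column.

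The cleaner route is to deform rather than compute. Consider the family of matrices obtained by rotating the $TU$-block entries of the $i$-th column from $(i_x,i_y)$ to $(i'_x,i'_y)$, holding everything else fixed. I do not expect this alone to suffice in general, so I would first try the following. Because the frame is arbitrary (Remark after Lemma \ref{lem2}), the sign comparison is frame-independent. Choose a new local frame whose first component is $Z_{TU}$ and whose remaining components are complementary edges avoiding crossings with $l_i$. For example, build them from edges of $\mathscr{D}^*$ not transverse to $l_i$ so that their expressions are identical under $\psi$ and $\psi'$ and do not involve $i$. In such a frame the $i$-th column is supported only in the $Z_{TU}$ rows. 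Laplace expansion along that column, combined with the $j$-column (which is common to both), factors each determinant as
\[
\begin{vmatrix} i_x & j_x\\ i_y & j_y\end{vmatrix}\cdot C
\quad\text{and}\quad
\begin{vmatrix} i'_x & j_x\\ i'_y & j_y\end{vmatrix}\cdot C
\]
with the same cofactor $C\neq 0$. Therefore $\det M$ and $\det M'$ have opposite signs if and only if the two $2\times 2$ determinants do.

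Justifying the existence of such an adapted frame, in which $i$ appears only in $Z_{TU}$, is the step I expect to be the obstacle. I would argue it from the fact that $\mathscr{D}^*$ contains cycles crossing $l_i$ exactly twice with opposite orientations, so that the $i$ contributions cancel for edges that do not separate $T$ from $U$.
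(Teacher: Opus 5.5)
Your Step 1 is fine; it even supplies the ``same sign $\Rightarrow$ same side'' direction, which the paper leaves implicit by referring back to Lemma \ref{lem2}. Step 2, however, which is the heart of the lemma, is not proved. Everything rests on the existence of a frame in which $i$ enters only the component $Z_{TU}$. Your suggested justification (cycles in $\mathscr{D}^*$ crossing $l_i$ twice so that the $i$-contributions cancel) is not an argument as it stands, and you do not check that the edges you pick actually form a local frame. You have also misdiagnosed the difficulty. It is true that $M$ need not be block diagonal, since $l_i$ may contribute to several frame components, but that does not matter. What matters is not where column $i$ is supported but where the rows of $Z_{TU}$ are supported, and the hypothesis of the lemma already tells you that.

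Order the frame so that $Z_{TU}$ comes first, and the coordinates so that $i,j$ come first. These are the same permutations for $M$ and $M'$, so the sign comparison is unaffected. Since the two $Z_{TU}$ rows vanish outside columns $i$ and $j$,
\[
M=\begin{pmatrix} A & 0\\ B & D\end{pmatrix},\qquad
M'=\begin{pmatrix} A' & 0\\ B' & D\end{pmatrix},\qquad
A=\begin{pmatrix} i_x & j_x\\ i_y & j_y\end{pmatrix},\quad
A'=\begin{pmatrix} i'_x & j_x\\ i'_y & j_y\end{pmatrix}.
\]
Here $D$ consists of the rows other than $Z_{TU}$ and the columns other than $i,j$. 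It is the same for both matrices because only column $i$ differs. Hence $\det M=\det A\cdot\det D$ and $\det M'=\det A'\cdot\det D$. The blocks $B,B'$, which record how $i$ and $j$ enter the other components, never affect the determinant. This block lower-triangular factorization is the paper's entire proof, and no change of frame is needed.

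It also repairs a smaller slip. You cannot get $\det M'\neq 0$ from invariance of domain, since that argument runs from invertibility to openness, not the other way. Indeed, the paper uses this very lemma, in Corollaries \ref{cor3} and \ref{cor4}, to deduce invertibility and openness for the new spaces. Instead, argue that $\det D\neq 0$ because $\det M\neq 0$, so $\det M'\neq 0$ whenever $\det A'\neq 0$.
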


\begin{proof}
    The main idea of proof is to show that $M$ and $M'$ have opposite signs in their determinants.
    
    Without loss of generality, we may assume that $Z_{TU}$ is the first component of the local frame, and $i$, $j$ are the first two components of the vectors in $(0,\infty)^8$. Then the first two rows in $M$ are $\begin{pmatrix}i_x & j_x & 0 & \dots & 0 \\ i_y & j_y & 0 & \dots & 0\end{pmatrix}$. Similarly, the first two rows in $M'$ are $\begin{pmatrix}i'_x & j_x & 0 & \dots & 0 \\ i'_y & j_y & 0 & \dots & 0\end{pmatrix}$. 
    
    Note that $M$ and $M'$ differ only by the first column. Therefore, by deleting the first two rows and the first two columns from $M$ and $M'$, respectively, we get the same sub-matrix. By multiplying the determinant of this sub-matrix to $\begin{vmatrix}i_x & j_x \\ i_y & j_y\end{vmatrix}$ and $\begin{vmatrix}i'_x & j_x \\ i'_y & j_y\end{vmatrix}$, respectively, we get the determinants of $M$ and $M'$. Thus, $M$ and $M'$ have opposite signs if and only if $\begin{vmatrix}i_x & j_x \\ i_y & j_y\end{vmatrix}$ and $\begin{vmatrix}i'_x & j_x \\ i'_y & j_y\end{vmatrix}$ have opposite signs. The rest of the proof is same as that of Lemma \ref{lem2} in principle.
\end{proof}

To apply lemma \ref{lem5} in practice, we can visualize the component $Z_{TU}$ of the local frame with both $\mathscr{A}$ and $\mathscr{A}'$. There is a natural way to orient the loops $l_i$ and $l_j$ in both $\mathscr{A}$ and $\mathscr{A}'$ according to the formula of $Z_{TU}$ in Lemma \ref{lem5}. It will be helpful to look at the arrows on these loops that indicate their orientations. 

For example, consider $l_i$ and $l_j$ in $\mathscr{A}$ and $\mathscr{A}$ in the first picture of Figure \ref{fig19}. Based on the formula of $Z_{TU}$ in Lemma \ref{lem5}, the loops $l_i$ and $l_j$ should be oriented by the blue arrows. In $\mathscr{A}$, the arrow on $l_j$ overlaps with the arrow on $l_i$ after a counterclockwise rotation by an angle $<\pi$. This means $\begin{vmatrix}i_x & j_x \\ i_y & j_y\end{vmatrix}>0$. In $\mathscr{A}'$, the arrow on $l_j$ overlaps with the arrow on $l_i$ after a clockwise rotation by an angle $<\pi$, which means $\begin{vmatrix}i_x & j_x \\ i_y & j_y\end{vmatrix}<0$. By Lemma \ref{lem5}, $\overline{\mathcal{O}}$ and $\overline{\mathcal{O}'}$ are on different sides of $\sigma_i$. Similarly, the reader may check that for $\mathscr{A}$ and $\mathscr{A}'$ in the second picture, $\overline{\mathcal{O}}$ and $\overline{\mathcal{O}'}$ are on the same side of $\sigma_i$.

    \begin{figure}[h]
    \centering    \includegraphics[width=0.95\textwidth]{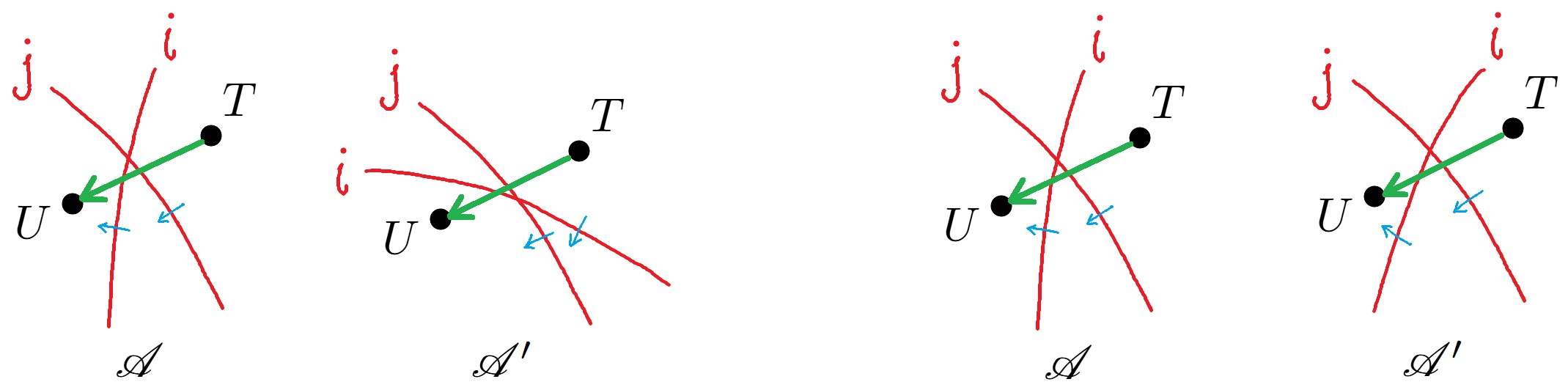}
    \caption{Applying Lemma \ref{lem5} by checking the orientations of $l_i$ and $l_j$ in $\mathscr{A}$ and $\mathscr{A}'$}
    \label{fig19}
    \end{figure}

The second technical result involves both $Z_{TU}$ and $Z_{VW}$.

\begin{lemma} \label{lem6}
    Suppose there exist constant unit vectors $\begin{pmatrix} i_x \\i_y \end{pmatrix}$, $\begin{pmatrix} i'_x \\i'_y \end{pmatrix}$, $\begin{pmatrix} j_x \\j_y \end{pmatrix}$, $\begin{pmatrix} k_x \\k_y \end{pmatrix}$ and $\begin{pmatrix} m_x \\m_y \end{pmatrix}$ such that $Z_{TU}=i\begin{pmatrix} i_x\\ i_y \end{pmatrix}+j\begin{pmatrix} j_x\\ j_y \end{pmatrix}+m\begin{pmatrix} m_x\\ m_y \end{pmatrix}$ and $Z_{VW}=k\begin{pmatrix} k_x\\ k_y \end{pmatrix}-m\begin{pmatrix} m_x\\ m_y \end{pmatrix}$ in the formula of $\psi$. In addition, $Z_{TU}=j\begin{pmatrix} j_x\\ j_y \end{pmatrix}+m\begin{pmatrix} m_x\\ m_y \end{pmatrix}$ and $Z_{VW}=i\begin{pmatrix} i'_x\\ i'_y \end{pmatrix}+k\begin{pmatrix} k_x\\ k_y \end{pmatrix}-m\begin{pmatrix} m_x\\ m_y \end{pmatrix}$ in the formula of $\psi'$. Then $\overline{\mathcal{O}}$ and $\overline{\mathcal{O}'}$ are on different sides of the face $\sigma_i$ if and only if the products of the determinants $\begin{vmatrix}i_x & j_x \\ i_y & j_y\end{vmatrix}\cdot \begin{vmatrix}k_x & m_x \\ k_y & m_y\end{vmatrix}$ and $\begin{vmatrix}i'_x & k_x \\ i'_y & k_y\end{vmatrix}\cdot\begin{vmatrix}j_x & m_x \\ j_y & m_y\end{vmatrix}$ have opposite signs.
\end{lemma}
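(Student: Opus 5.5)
The plan is to reduce the statement, exactly as in Lemma \ref{lem5}, to comparing the signs of $\det(M)$ and $\det(M')$, and then to finish with the Cramer's Rule argument from the proof of Lemma \ref{lem2}. By the Remark after Lemma \ref{lem2}, reordering the local frame or the coordinates only multiplies both determinants by the same nonzero factor. So without loss of generality I would order the rows so that $Z_{TU}$ occupies rows $1,2$ and $Z_{VW}$ occupies rows $3,4$, and order the columns so that $i,j,k,m$ are the first four coordinates.

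In the same spirit as Lemma \ref{lem5}, I would use the following facts:
\begin{itemize}
\item the coordinates $i,j,k,m$ contribute only to the components $Z_{TU}$ and $Z_{VW}$, so columns $1$--$4$ vanish outside rows $1$--$4$;
\item $M$ and $M'$ differ only in the column of $i$ (the first column).
\end{itemize}
Then both matrices are block upper triangular with the same lower-right block $B$. Hence $\det(M)=\det(A)\det(B)$ and $\det(M')=\det(A')\det(B)$, where $A$ and $A'$ are the upper-left $4\times 4$ blocks. Checking that this block structure really holds is the point I expect to need the most care. It amounts to reading off from the hypotheses that no other component of the frame involves $i,j,k,m$, which is the same implicit assumption made in Lemma \ref{lem5}.

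Next I would compute $\det(A)$ and $\det(A')$ by Laplace expansion along the first two rows. Write each column as a stacked pair of vectors (top $=Z_{TU}$ part, bottom $=Z_{VW}$ part).
\begin{itemize}
\item In $A$ the columns are $(\vec i,0)$, $(\vec j,0)$, $(0,\vec k)$, $(\vec m,-\vec m)$. The top-row pairs with nonzero minor are $\{1,2\},\{1,4\},\{2,4\}$. The complementary bottom minors for $\{1,4\}$ and $\{2,4\}$ contain a zero column. Only $\{1,2\}$ survives, with sign $+$, giving $\det(A)=\begin{vmatrix}i_x & j_x \\ i_y & j_y\end{vmatrix}\cdot\begin{vmatrix}k_x & -m_x \\ k_y & -m_y\end{vmatrix}=-\begin{vmatrix}i_x & j_x \\ i_y & j_y\end{vmatrix}\begin{vmatrix}k_x & m_x \\ k_y & m_y\end{vmatrix}$.
\item In $A'$ the first column becomes $(0,\vec i')$. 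Now only the pair $\{2,4\}$ has nonzero top minor. Its Laplace sign is $(-1)^{1+2+2+4}=-1$ and its complementary bottom minor is $\begin{vmatrix}i'_x & k_x \\ i'_y & k_y\end{vmatrix}$. This gives $\det(A')=-\begin{vmatrix}i'_x & k_x \\ i'_y & k_y\end{vmatrix}\begin{vmatrix}j_x & m_x \\ j_y & m_y\end{vmatrix}$.
\end{itemize}
Therefore $\det(M)$ and $\det(M')$ have opposite signs if and only if the two products in the statement have opposite signs.

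Finally I would argue as in Lemma \ref{lem2}. Near the interior of $\sigma_i$ the extended frame is valid on both sides, and $\psi,\psi'$ are invertible linear maps. Replacing the common differing column by the frame vector of a surface gives the same matrix for both. So by Cramer's Rule, the $i$-coordinates of a surface with respect to $\mathscr{A}$ and $\mathscr{A}'$ are the same numerator divided by $\det(M)$ and by $\det(M')$.
\begin{itemize}
\item If the determinants have opposite signs, no surface near $\sigma_i$ has positive $i$-coordinate for both arrangements, so $\overline{\mathcal{O}}$ and $\overline{\mathcal{O}'}$ are on different sides of $\sigma_i$.
\item If the determinants have the same sign, a surface with small positive $i$-coordinate with respect to $\mathscr{A}$ and the other coordinates near a point in the interior of $\sigma_i$ also has small positive coordinates with respect to $\mathscr{A}'$. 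By continuity it lies in both $\mathcal{O}$ and $\mathcal{O}'$, so they are on the same side.
\end{itemize}
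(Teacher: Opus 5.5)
Your plan matches the paper's proof. You reduce to comparing the signs of $\det M$ and $\det M'$ through a common complementary block, compute the two $4\times 4$ minors, and finish with Cramer's Rule as in Lemma 2. Your Laplace expansions are correct and give exactly the paper's values, $-\begin{vmatrix}i_x & j_x \\ i_y & j_y\end{vmatrix}\begin{vmatrix}k_x & m_x \\ k_y & m_y\end{vmatrix}$ and $-\begin{vmatrix}i'_x & k_x \\ i'_y & k_y\end{vmatrix}\begin{vmatrix}j_x & m_x \\ j_y & m_y\end{vmatrix}$.

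The one step that fails as you justify it is the block structure. You assert that the columns of $i,j,k,m$ vanish outside the rows of $Z_{TU}$ and $Z_{VW}$, i.e.\ that no other frame component involves $i,j,k,m$. The hypotheses do not give this, and it is false in the paper's own applications. In Lemma 10 the lemma is applied with $Z_{TU}=Z_{1^+5^+}$, $Z_{VW}=Z_{5^+4^+}$ and $k=f$. The same frame also contains $Z_{2^+1^+}=d\vec{d}+h\vec{h}+f\vec{f}$ (see Lemma 12), so the $f$-column is nonzero outside the first four rows, and your block upper triangular form does not exist there.

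What the hypotheses do give is the transposed statement. The formulas for $Z_{TU}$ and $Z_{VW}$ are complete, so the first four rows vanish outside the first four columns. Hence $M=\begin{pmatrix}A&0\\ C&B\end{pmatrix}$ and $M'=\begin{pmatrix}A'&0\\ C'&B\end{pmatrix}$. The lower-right block $B$ is the same for both because $M$ and $M'$ differ only in the first column, which lies in the left block. The possibly different $C$ and $C'$ are harmless. This block lower triangular form is what the paper uses: its displayed first four rows have zeros beyond column four. It gives $\det M=\det A\det B$ and $\det M'=\det A'\det B$ regardless of $C$.

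With this one-line replacement your argument goes through unchanged. Your explicit treatment of the converse direction is more complete than the paper, which only points to the one-directional argument of Lemma 2. In that direction, equal signs give a common surface near the interior of $\sigma_i$, because $M$ and $M'$ agree off the first column and so points with $i=0$ have the same coordinates for both arrangements.
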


\begin{proof}
    Again, we need to show that the determinants of $M$ and $M'$ have opposite signs. Without loss of generality, we assume that $Z_{TU}$ and $Z_{VW}$ are the first two components of the local frame, and $i$, $j$, $k$, $m$ are the first four components of the vectors in $[0,\infty)^8$. Then the first four rows of $M$ and $M'$ are $\begin{pmatrix}
        i_x & j_x & 0 & m_x & 0 & \dots & 0 \\
        i_y & j_y & 0 & m_y & 0 & \dots & 0 \\
        0 & 0 & k_x & -m_x & 0 & \dots & 0 \\
        0 & 0 & k_y & -m_y & 0 & \dots & 0 \\
    \end{pmatrix}$ and $\begin{pmatrix}
        0 & j_x & 0 & m_x & 0 & \dots & 0 \\
        0 & j_y & 0 & m_y & 0 & \dots & 0 \\
        i'_x & 0 & k_x & -m_x & 0 & \dots & 0 \\
        i'_y & 0 & k_y & -m_y & 0 & \dots & 0 \\
    \end{pmatrix}$, respectively.

    Since $M$ and $M'$ differ only by the first column, their sub-matrices after deleting the first four rows and columns are equivalent. Thus, to compare the signs of their determinants, it suffices to compare the signs of their $4\times4$-minor on the upper-left corner. 
    
    For $M$, this minor is $\begin{vmatrix}i_x & j_x & 0 & m_x \\ i_y & j_y & 0 & m_x \\0 & 0 & k_x & -m_x \\ 0 & 0 & k_y & -m_y \end{vmatrix}=-\begin{vmatrix}i_x & j_x \\ i_y & j_y\end{vmatrix}\cdot \begin{vmatrix}k_x & m_x \\ k_y & m_y\end{vmatrix}$. For $M'$, it is $\begin{vmatrix} 0 & j_x & 0 & m_x \\ 0 & j_y & 0 & m_y \\i'_x & 0 & k_x & -m_x \\ i'_y & 0 & k_y & -m_y \end{vmatrix}=-\begin{vmatrix}i'_x & k_x \\ i'_y & k_y\end{vmatrix}\cdot\begin{vmatrix}j_x & m_x \\ j_y & m_y\end{vmatrix}$. Thus, the determinants of $M$ and $M'$ have opposite signs if and only if these two products have opposite signs. The rest of the proof is similar to that of Lemma \ref{lem2}.
\end{proof}

Again, to apply Lemma \ref{lem6} in practice, it will be helpful to visualize $Z_{TU}$ and $Z_{VW}$ with $\mathscr{A}$ and $\mathscr{A}'$. There is a natural way to orient the loops $l_i$, $l_j$, $l_k$ and $l_m$ in $\mathscr{A}$ and $\mathscr{A}'$ according to the formulas of $Z_{TU}$ and $Z_{VW}$ in Lemma \ref{lem6}. The sign of the products in Lemma \ref{lem6} can be computed more quickly by looking at the arrows that indicate their orientations.

\begin{figure}[h]
    \centering
    \includegraphics[width=0.7\linewidth]{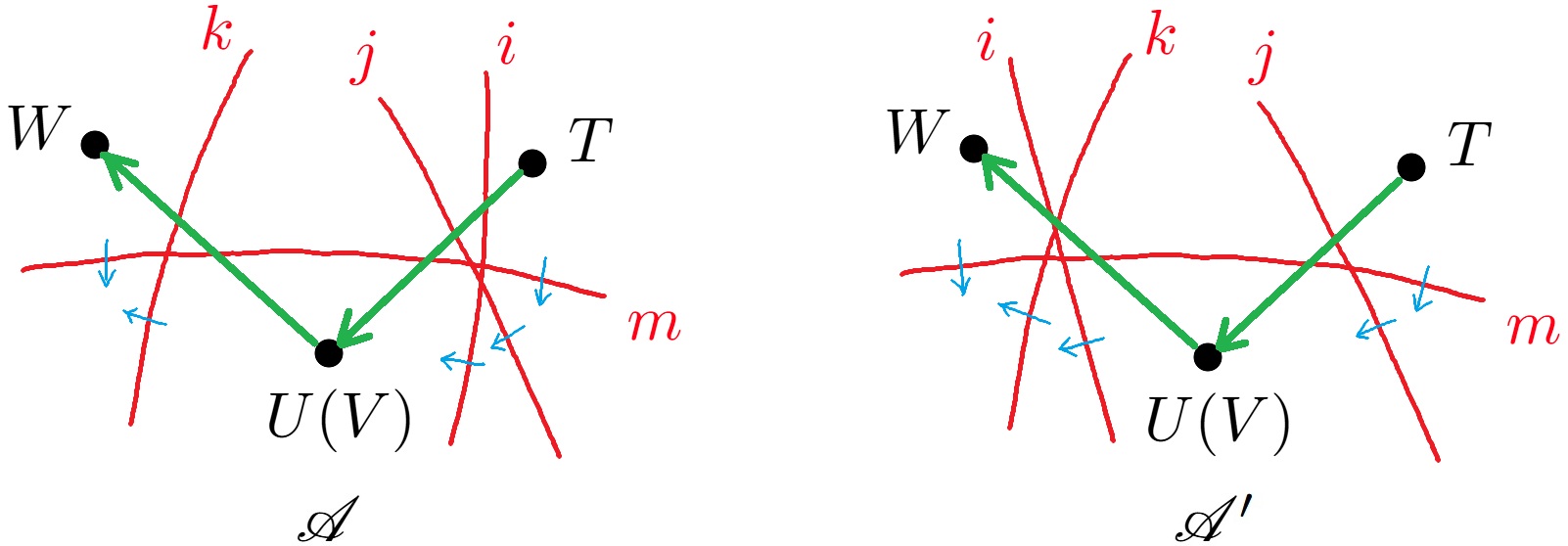}
    \caption{Applying Lemma \ref{lem6} by checking the orientations of $l_i$, $l_j$, $l_k$ and $l_m$}
    \label{fig20}
\end{figure}

For example, Figure \ref{fig20} shows a particular case where $U=V$. Consider the loops $l_i$, $l_j$, $l_k$ and $l_m$ in $\mathscr{A}$ and $\mathscr{A}'$. Their orientations based on the formulas of $Z_{TU}$ and $Z_{VW}$ are marked by blue arrows. In $\mathscr{A}$, the blue arrow on $l_i$ overlaps with that on $l_j$ after a clockwise rotation by an angle $<\pi$, which means $\begin{vmatrix}i_x & j_x \\ i_y & j_y\end{vmatrix}>0$. Similarly, we have $\begin{vmatrix}k_x & m_x \\ k_y & m_y\end{vmatrix}>0$. In $\mathscr{A}'$, one can check from the blue arrows that $\begin{vmatrix}i'_x & k_x \\ i'_y & k_y\end{vmatrix}<0$ and $\begin{vmatrix}j_x & m_x \\ j_y & m_y\end{vmatrix}>0$. Therefore, $\overline{\mathcal{O}}$ and $\overline{\mathcal{O}'}$ are on different sides of $\sigma_i$ by Lemma \ref{lem6}.

Now let $\mathcal{O}$ be a space of Type 1, 2, 3 or 4 and $\overline{\mathcal{O}}$ be its closure. For every face $\sigma$ of $\overline{\mathcal{O}}$, we will apply Lemma \ref{lem5} and Lemma \ref{lem6} to show that there exists a space $\mathcal{O}'$ of Type 1, 2, 3 or 4 whose closure  $\overline{\mathcal{O}'}$ is on the other side of $\sigma$ with respect to $\overline{\mathcal{O}}$.

\subsection*{Type 1 Spaces}

Without loss of generality, we can assume that $\mathcal{O}$ is ${\mathcal{O}_1}$. Due to the symmetry of $\mathscr{A}_1$, we can also assume that the face $\sigma$ is $\sigma_b$. 

\begin{lemma} \label{lem7}
    The closures $\overline{\mathcal{O}_1}$ and $\overline{\mathcal{O}_2}$ are on different sides of $\sigma_b$.
\end{lemma}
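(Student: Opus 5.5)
The plan is to reduce the statement to a sign comparison of two $2\times 2$ determinants, via Lemma \ref{lem5} or, if necessary, Lemma \ref{lem6}.

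First I will check that $\mathscr{A}_1$ and $\mathscr{A}_2$ in Figure \ref{fig17} are adjacent and differ only by $l_b$. I would do this as in the discussion around Figure \ref{fig11}: drag the labeled vertices that $l_b$ separates differently in the two arrangements across $l_b$, and deform $l_b$ within its homotopy class. Then $\overline{\mathcal{O}_1}$ and $\overline{\mathcal{O}_2}$ share the face $\sigma_b$.

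Next I use the local frame $(Z_{2^+3^+}, Z_{3^+4^+}, Z_{4^+5^+}, Z_{5^+2^-})$ on $\mathcal{O}_1$ from Lemma \ref{lem4}. In $\mathscr{A}_1$ it gives $Z_{2^+3^+}=a\begin{pmatrix} a_x\\ a_y\end{pmatrix}+b\begin{pmatrix} b_x\\ b_y\end{pmatrix}$, with block-diagonal matrix $M_1$. I extend this frame through the interior of $\sigma_b$ to $\mathcal{O}_2$ as in the proof of Lemma \ref{lem2}, keeping the homotopy classes of the developing paths fixed. I keep the orientations of all loops other than $l_b$ as in Figure \ref{fig18}, and orient $l_b$ in $\mathscr{A}_2$ compatibly with the extended frame. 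Reading off which frame edges cross $l_b$ in $\mathscr{A}_2$ then gives the matrix $M_2$, which differs from $M_1$ only in the $b$-column.

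This leads to two cases.

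\textbf{Case 1.} In $\mathscr{A}_2$, the loop $l_b$ is still crossed only by the edge $2^+3^+$, with a new unit vector $\begin{pmatrix} b'_x\\ b'_y\end{pmatrix}$. I would apply Lemma \ref{lem5} with $i=b$, $j=a$. It then suffices to compare the signs of $\begin{vmatrix} b_x & a_x\\ b_y & a_y\end{vmatrix}$ and $\begin{vmatrix} b'_x & a_x\\ b'_y & a_y\end{vmatrix}$. Each is $\pm\sin$ of half a sum of cone-deficits, with the half-sum computed from Equation \ref{eq1} using the lune bounded by $l_a$ and $l_b$. The sign is read from the relative rotation of the arrows on $l_a$ and $l_b$, as explained after Lemma \ref{lem5}.

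\textbf{Case 2.} Moving $l_b$ transfers it from $Z_{2^+3^+}$ to another component, say $Z_{3^+4^+}$. Here $Z_{2^+3^+}$ and $Z_{3^+4^+}$ share a loop $l_m$ that enters them with opposite signs, as in the situation $U=V$ of Figure \ref{fig20}. I would then apply Lemma \ref{lem6} instead. This requires computing the signs of the two products of determinants from the arrow configurations.

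The main obstacle is this middle bookkeeping. From the pictures I have to determine exactly which frame components cross $l_b$ after the move, with which signs, and which of the remaining determinant signs change. A mistake in the orientation of $l_b$ in $\mathscr{A}_2$ flips the conclusion. I would first try to pick the frame, via the Remark after Lemma \ref{lem2} which allows any frame, so that Case 1 applies. That makes the argument a single application of Lemma \ref{lem5}, exactly parallel to Lemma \ref{lem2}, ending with $\det M_1>0>\det M_2$ and Cramer's rule.
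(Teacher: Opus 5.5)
Your proposal is correct and follows the paper's proof, which is exactly your Case 1: it uses the frame from Lemma \ref{lem4}, observes that $\mathscr{A}_2$ arises from $\mathscr{A}_1$ by moving $2^{\pm}$ and $3^{\pm}$ across $l_b$, and applies Lemma \ref{lem5} to $Z_{2^+3^+}$ with $i=b$, $j=a$. The paper reads off from Figure \ref{fig21} that $Z_{2^+3^+}$ still involves only $a$ and $b$ (with the same $a$-vector) and that $\begin{vmatrix} b_x & a_x\\ b_y & a_y\end{vmatrix}<0<\begin{vmatrix} b'_x & a_x\\ b'_y & a_y\end{vmatrix}$, so your Case 2 fallback via Lemma \ref{lem6} is never needed.
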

\begin{proof}  

    Note that $\mathscr{A}_2$ is obtained from $\mathscr{A}_1$ by moving the labeled vertices $2^+$, $2^-$, $3^+$, and $3^-$ in $\mathscr{A}_1$ across the loop $l_b$ simultaneously. Thus. similar to $\mathscr{A}$ and $\mathscr{A}'$ in Figure \ref{fig11}, $\mathscr{A}_1$ and $\mathscr{A}_2$ differ only by the loop $l_b$. 
    
    Consider the local frame on $\mathcal{O}_1$ in Lemma \ref{lem4}, which can be extended continuously to $\mathcal{O}_2$ through the interior of $\sigma_b$. In Figure \ref{fig21}, we visualize the component $Z_{2^+3^+}$ of this local frame with $\mathscr{A}_1$ and $\mathscr{A}_2$. Let $\psi_1$ be defined as in Lemma \ref{lem4}, and let $\psi_2$ be the map from $[0,\infty)^8$ to this local frame via surfaces in $\mathcal{O}_2$. We orient the loops $l_b$ and $l_a$ in $\mathscr{A}_1$ and $\mathscr{A}_2$ according to the blue arrows, and observe from Figure \ref{fig21} that there exist unit vectors $\begin{pmatrix} b_x\\ b_y \end{pmatrix}$, $\begin{pmatrix} b'_x\\ b'_y \end{pmatrix}$ and $\begin{pmatrix} a_x\\ a_y \end{pmatrix}$ such that $Z_{2^+3^+}=b\begin{pmatrix} b_x\\ b_y \end{pmatrix}+a\begin{pmatrix} a_x\\ a_y \end{pmatrix}$ in $\psi_1$ and $Z_{2^+3^+}=b\begin{pmatrix} b'_x\\ b'_y \end{pmatrix}+a\begin{pmatrix} a_x\\ a_y \end{pmatrix}$ in $\psi_2$. 

    Analogous to the first example in Figure \ref{fig19}, from the directions of the blue arrows in Figure \ref{fig21}, we see that $\begin{vmatrix}b_x & a_x \\ b_y & a_y\end{vmatrix}<0$ and $\begin{vmatrix}b'_x & a_x \\ b'_y & a_y\end{vmatrix}>0$. Thus, by Lemma \ref{lem5},  $\overline{\mathcal{O}_1}$ and $\overline{\mathcal{O}_2}$ are on different sides of $\sigma_b$.
\end{proof}

\begin{figure}[h]
    \centering
    \includegraphics[width=0.75\linewidth]{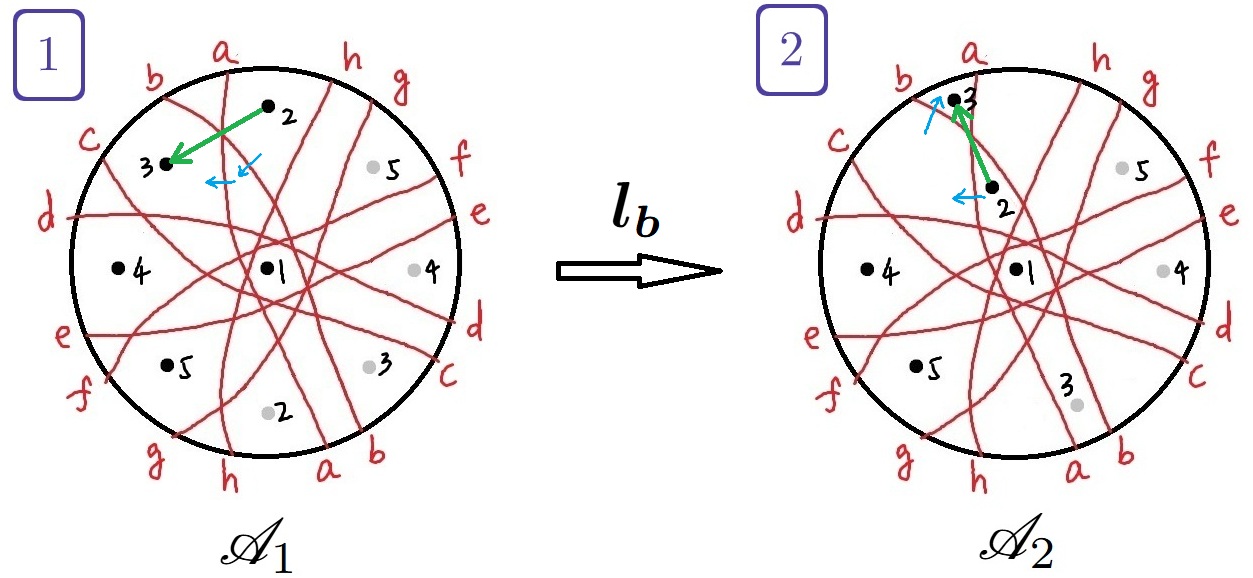}
    \caption{Visualizing $Z_{2^+3^+}$ with $\mathscr{A}_1$ and $\mathscr{A}_2$}
    \label{fig21}
\end{figure}

\begin{cor} \label{cor3}
Every Type 2 space is open in $\mathcal{C}_{10}(\delta_1, \delta_2, \delta_3, \delta_4, \delta_5)$.    
\end{cor}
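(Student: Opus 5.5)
It suffices to prove that $\mathcal{O}_2$ is open, since every Type 2 space is obtained from $\mathcal{O}_2$ by relabeling vertices and loops. The argument will copy the proof of Lemma \ref{lem4}: show that $F_2:(0,\infty)^8\to\mathcal{O}_2$ is continuous and injective, then apply invariance of domain, using that $\mathcal{C}_{10}(\delta_1,\delta_2,\delta_3,\delta_4,\delta_5)$ has real dimension $8$ by Theorem \ref{thm1}. Continuity of $F_2$ is clear from the construction. Injectivity will follow once $F_2$ is composed with a linear map given by an invertible matrix.

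\textbf{Choosing the frame.} I will not build a new unfolding for $\mathcal{O}_2$. Instead I will reuse the local frame $(Z_{2^+3^+}, Z_{3^+4^+}, Z_{4^+5^+}, Z_{5^+2^-})$ from Lemma \ref{lem4}, extended continuously from $\mathcal{O}_1$ into $\mathcal{O}_2$ through the interior of $\sigma_b$. The extension is the one described in Lemma \ref{lem2}: keep the homotopy classes of the developing paths fixed. This is exactly the map $\psi_2$ already used in the proof of Lemma \ref{lem7}, and it is represented by a matrix $M_2$. Each component of the frame is a sum of signed multiples of constant unit vectors, read off from the oriented loops in $\mathscr{A}_2$. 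As in the proof of Lemma \ref{lem7}, only the loop $l_b$ changes between $\mathscr{A}_1$ and $\mathscr{A}_2$. So $M_2$ agrees with $M_1$ except in the column for $b$.

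\textbf{Showing $M_2$ is invertible.} The proof of Lemma \ref{lem7} shows $Z_{2^+3^+}=b\begin{pmatrix} b'_x\\ b'_y\end{pmatrix}+a\begin{pmatrix} a_x\\ a_y\end{pmatrix}$ in $\psi_2$. If the $b$-column of $M_2$ has nonzero entries only in the first two rows, then $M_2$ stays block diagonal, and the reduction in Lemma \ref{lem5} gives
$$\det(M_2)=\begin{vmatrix}b'_x & a_x\\ b'_y & a_y\end{vmatrix}\cdot\det(\text{remaining block}).$$
The first factor is positive, as noted in Lemma \ref{lem7}. The remaining block is the same as in $M_1$, so it is nonzero. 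Hence $\det(M_2)\neq0$, with sign opposite to $\det(M_1)$. The $2\times2$ determinant is a sine of an angle computed from Equation \ref{eq1}, and it is nonzero because every angle of a parallelogram lies strictly between $0$ and $\pi$.

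\textbf{Main obstacle.} The hard step is checking, from the picture of $\mathscr{A}_2$, which frame components actually involve $l_b$. After $2^\pm,3^\pm$ are moved across $l_b$, a path such as $Z_{5^+2^-}$ may also cross $l_b$, as happened for $l_a$ in Lemma \ref{lem2}. Then the $b$-column acquires extra entries like $\pm b'$ in other rows. If so, I would compute the determinant by Cramer-style column reduction, or expand along the $b$-column as in Lemma \ref{lem2}. The other blocks are unchanged and invertible, so the determinant should reduce to a product of sines of parallelogram angles, possibly up to sign, which is still nonzero. Once $\det(M_2)\neq0$ is established, $\psi_2=M_2\circ F_2^{-1}$-type reasoning gives injectivity of $F_2$ on $(0,\infty)^8$. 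Invariance of domain then shows that $\mathcal{O}_2$ is open.
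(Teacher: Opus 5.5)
Your proposal is correct and follows essentially the paper's proof. Like the paper, you reuse the Lemma~\ref{lem4} frame extended through $\sigma_b$ (the map $\psi_2$ of Lemma~\ref{lem7}), get from Lemma~\ref{lem7} (via Lemma~\ref{lem5}) that $\det M_2$ has sign opposite to $\det M_1\neq 0$, and finish with invariance of domain as in Lemma~\ref{lem4}. Your ``main obstacle'' is harmless: the rows of $Z_{2^+3^+}$ are supported only in the $a$- and $b$-columns, so any extra entries of the $b$-column in lower rows leave $M_2$ block lower-triangular, and $\det M_2=\det(A)\det(D)$ with $D$ shared with $M_1$ holds regardless, exactly as in Lemma~\ref{lem2}.
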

\begin{proof}
   
    Without loss of generality, we prove the result for $\mathcal{O}_2$. Let $\psi_1$ and $\psi_2$ be the maps in the proof of Lemma \ref{lem7}. Referring to the proof of Lemma \ref{lem4}, we see that the key step is to show that $\psi_2$ is invertible. Let $M_1$ and $M_2$ be the matrices representing $\psi_1$ and $\psi_2$, respectively. By Lemma \ref{lem4}, $M_1$ is invertible. By Lemma \ref{lem7}, the determinants of $M_1$ and $M_2$ have opposite signs, so $M_2$ is invertible. The rest of proof is identical to that of Lemma \ref{lem4} in principle.
\end{proof}

\subsection*{Type 2 Spaces}
Without loss of generality, we can assume that $\mathcal{O}$ is $\mathcal{O}_2$. Observe from Figure \ref{fig22} that there are some symmetries among loops in $\mathscr{A}_2$. A reflection about the axis on the left interchanges $l_a$ and $l_b$, $l_c$ and $l_g$, $l_d$ and $l_h$. A reflection about the axis on the right interchanges $l_e$ and $l_f$, $l_c$ and $l_h$, $l_d$ and $l_g$. Thus, in summary, we can divide the $8$ loops into three symmetry classes: 1) $l_a$, $l_b$; 2) $l_e$ and $l_f$; 3) $l_c$, $l_d$, $l_g$ and $l_h$. Therefore, it suffices to discuss the cases when the face $\sigma$ is $\sigma_b$, $\sigma_e$ or $\sigma_h$. Since we have shown that $\overline{\mathcal{O}_1}$ and $\overline{\mathcal{O}_2}$ are on different sides of $\sigma_b$, we only need to discuss the cases $\sigma_e$ and $\sigma_h$.

\begin{figure}[h]
    \centering
    \includegraphics[width=0.7\linewidth]{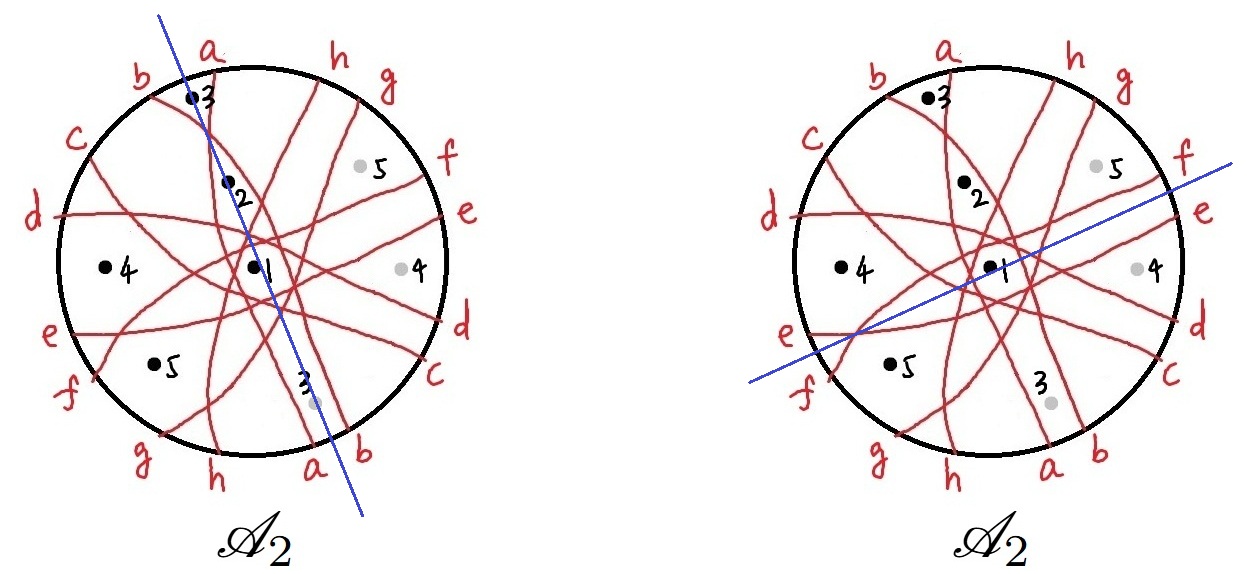}
    \caption{Two reflection symmetries of $\mathscr{A}_2$ about the axes in dark blue.}
    \label{fig22}
\end{figure}

\begin{lemma} \label{lem8}
    The closures $\overline{\mathcal{O}_2}$ and $\overline{\mathcal{O}_3}$ are on different sides of $\sigma_h$. 
\end{lemma}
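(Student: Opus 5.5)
The plan is to follow the template of Lemma \ref{lem7}: first check that $\mathscr{A}_2$ and $\mathscr{A}_3$ are adjacent and differ only in $l_h$, then compare the signs of the determinants of the matrices $M_2$ and $M_3$ that represent $\psi_2$ and $\psi_3$ in one common local frame. Adjacency is checked as in Figure \ref{fig11}. I expect $\mathscr{A}_3$ to arise from $\mathscr{A}_2$ by moving a set of labeled vertices, together with their antipodes, across $l_h$. I would then drag those vertices back to their positions in $\mathscr{A}_2$ and deform $l_h$ in $\mathscr{A}_3$ within its homotopy class. This shows that every other loop keeps its position relative to all labeled vertices, so $\sigma_h$ is a common face of $\overline{\mathcal{O}_2}$ and $\overline{\mathcal{O}_3}$.

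Next I would fix a local frame on $\mathcal{O}_2$. A natural candidate is the frame $(Z_{2^+3^+}, Z_{3^+4^+}, Z_{4^+5^+}, Z_{5^+2^-})$ from Lemma \ref{lem4}, extended to $\mathcal{O}_2$ through $\sigma_b$ as in Lemma \ref{lem7}. By the Remark after Lemma \ref{lem2}, I may instead pick whatever frame makes the bookkeeping simplest, since the sign comparison does not depend on this choice. I would extend this frame continuously through the interior of $\sigma_h$ into $\mathcal{O}_3$, following the developing-map argument of Lemma \ref{lem2}. Then, with the loops oriented by arrows on both $\mathscr{A}_2$ and $\mathscr{A}_3$, I would read off how each component is written in terms of $a,\dots,h$. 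Only the columns of $M_2$ and $M_3$ that correspond to $h$ can differ.

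The decisive step is to identify which components involve $h$, and with what signs, in each arrangement. If $l_h$ enters only one component $Z_{TU}=h\,\mathbf{h}+j\,\mathbf{j}$ in both formulas, I would apply Lemma \ref{lem5}. In that case I only need to check, from the arrows, that the rotation from $l_j$ to $l_h$ is counterclockwise in one arrangement and clockwise in the other, as in the first example of Figure \ref{fig19}. If instead moving the vertices across $l_h$ transfers $h$ from one component to another, I expect the pattern of Lemma \ref{lem6}: $h$ is removed from some $Z_{TU}$ and added to $Z_{VW}$, with a shared loop $l_m$ appearing with opposite signs in the two components. I would then compute the two products of $2\times 2$ determinants from the arrow orientations, as in Figure \ref{fig20}, and check that they have opposite signs. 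Each $2\times2$ determinant is nonzero with a sign forced by Equation \ref{eq1}, because the corresponding lune angle lies strictly between $0$ and $\pi$.

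The main obstacle is this visualization step. I have to choose the cutting edges, or the frame components, so that the expressions for $\psi_2$ and $\psi_3$ fit exactly the hypotheses of Lemma \ref{lem5} or Lemma \ref{lem6}, meaning that the matrices differ only in the $h$-column with the stated block structure. I also have to orient the loops consistently in both arrangements. If the frame of Lemma \ref{lem4} does not put $h$ into such a form, I would first try replacing it with a frame adapted to $\mathscr{A}_2$, one in which $l_h$ is crossed by a single green edge near the vertices that move. The Remark guarantees that this change of frame does not affect the sign conclusion.
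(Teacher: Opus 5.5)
Your strategy is the paper's. Your fallback is what the paper actually does: it sets aside the frame of Lemma \ref{lem4}, builds a frame adapted to $\mathscr{A}_2$, and ends up in the transfer case of Lemma \ref{lem6}.

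\textbf{The gap.} The proposal stops exactly where the content of the lemma begins, so as written it does not prove the statement. Nothing in it uses a specific feature of $\mathscr{A}_2$ and $\mathscr{A}_3$. The same text would apply verbatim to adjacent arrangements lying on the \emph{same} side of their common face (the second example of Figure \ref{fig19}), so it cannot by itself decide between the two outcomes. Equation \ref{eq1} only guarantees that each $2\times 2$ determinant is nonzero. Its sign depends on how the loops are oriented in these particular arrangements, and that is precisely the check you defer with ``I would check''.

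\textbf{The missing data (Figure \ref{fig23}).}
\begin{enumerate}
\item[(i)] Which vertices move: $\mathscr{A}_3$ is obtained from $\mathscr{A}_2$ by moving $1^\pm$ and $2^\pm$ across $l_h$.
\item[(ii)] The frame: the paper does not carry the Lemma \ref{lem4} frame through $\sigma_b$. Instead it cuts and unfolds the surfaces in $\mathcal{O}_2$ afresh, giving $(Z_{5^-2^+}, Z_{2^+3^+}, Z_{3^+4^+}, Z_{4^+5^+})$, and extends this frame into $\mathcal{O}_3$ through the interior of $\sigma_h$.
\item[(iii)] The expressions: write $\mathbf{h}$ for the constant unit vector $(h_x,h_y)^T$, and similarly for the other loops.
\begin{itemize}
\item In $\psi_2$: $Z_{5^-2^+}=h\mathbf{h}+g\mathbf{g}+b\mathbf{b}$ and $Z_{2^+3^+}=a\mathbf{a}-b\mathbf{b}$.
\item In $\psi_3$: $Z_{5^-2^+}=g\mathbf{g}+b\mathbf{b}$ and $Z_{2^+3^+}=h\mathbf{h}'+a\mathbf{a}-b\mathbf{b}$.
\end{itemize}
So the $h$-term migrates across the moved vertex $2^+$. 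Lemma \ref{lem6} applies with $T=5^-$, $U=V=2^+$, $W=3^+$, $i=h$, $j=g$, $k=a$, $m=b$. The displayed formulas force this assignment; the paper's text lists $j$ and $k$ the other way round.
\item[(iv)] The sign check: the paper reads off from the arrows that this is the configuration of Figure \ref{fig20}. Hence $\det(\mathbf{h},\mathbf{g})\det(\mathbf{a},\mathbf{b})>0$ while $\det(\mathbf{h}',\mathbf{a})\det(\mathbf{g},\mathbf{b})<0$. The products have opposite signs, so the two closures lie on different sides of $\sigma_h$.
\end{enumerate}

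To complete your proof, commit to this frame (or an equivalent one), derive these expressions, and carry out the sign check explicitly.
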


\begin{proof}
    Observe from Figure \ref{fig23} that $\mathscr{A}_3$ is obtained from $\mathscr{A}_2$ by moving the labeled vertices $1^+$, $2^+$, $1^-$ and $2^-$ across $l_h$. Thus, $\mathscr{A}_2$ and $\mathscr{A}_3$ differ only by $l_h$, so $\sigma_h$ is a face of both $\overline{\mathcal{O}_2}$ and $\overline{\mathcal{O}_3}$.

    \begin{figure}[h]
    \centering
    \includegraphics[width=0.95\linewidth]{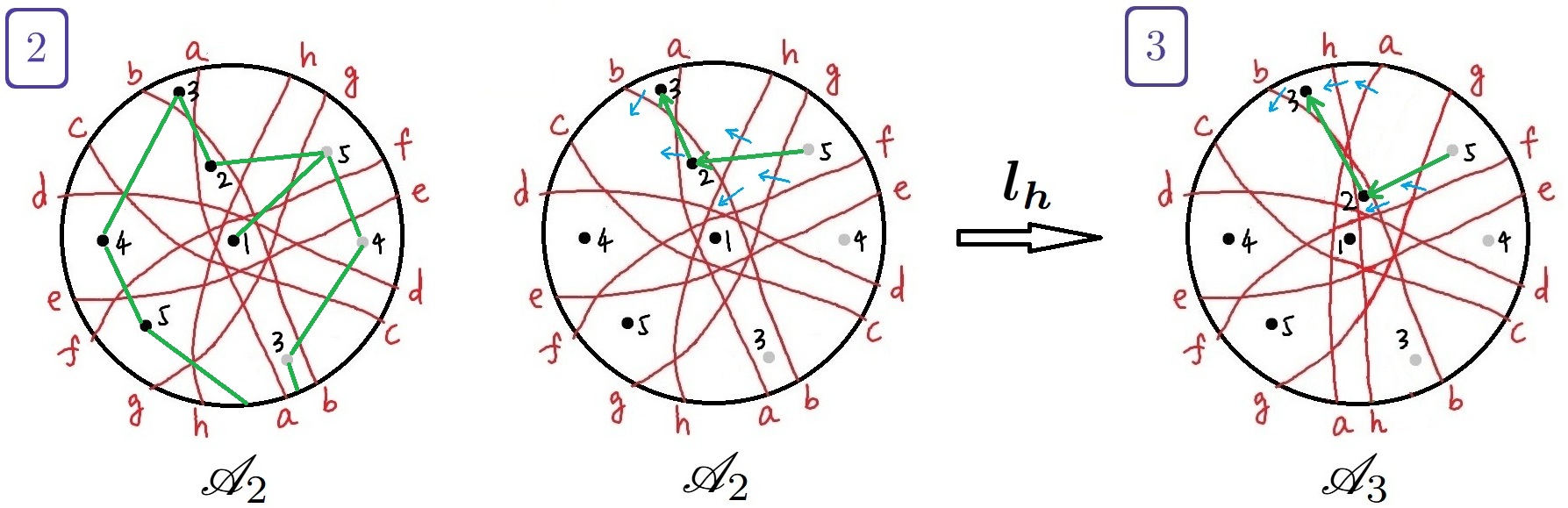}
    \caption{The unfolding demonstrated in the first picture gives rise to a local frame, whose components $Z_{5^-2^+}$ and $Z_{2^+3^+}$ are visualized with $\mathscr{A}_2$ and $\mathscr{A}_3$.}
    \label{fig23}
\end{figure}

    We consider the consistent way of cutting and unfolding surfaces in $\mathcal{O}_2$ demonstrated in the first picture of Figure \ref{fig23}. The green segments connecting $5^+$ to $2^-$ and $2^-$ to $3^-$ are not fully visible, but their antipodal images are drawn. This unfolding gives rise to a local frame $(Z_{5^-2^+}$, $Z_{2^+3^+}$, $Z_{3^+4^+}$, $Z_{4^+5^+})$ on $\mathcal{O}_2$, and can be extended to $\mathcal{O}_3$ through the interior of $\sigma_h$. Let $\psi_2$ and $\psi_3$ be the maps from $[0,\infty)^8$ to this local frame via surfaces in $\mathcal{O}_2$ and $\mathcal{O}_3$, respectively.

    In the second and the third pictures of Figure \ref{fig23}, we visualize the components $Z_{5^-2^+}$ and $Z_{2^+3^+}$ in this local frame with $\mathscr{A}_2$ and $\mathscr{A}_3$. By orienting the loops $l_h$, $l_a$, $l_g$ and $l_b$ in $\mathscr{A}_2$ and $\mathscr{A}_3$ according to the blue arrows, we observe that there exist unit vectors $\begin{pmatrix} h_x\\ h_y \end{pmatrix}$, $\begin{pmatrix} h'_x\\ h'_y \end{pmatrix}$, $\begin{pmatrix} a_x\\ a_y \end{pmatrix}$, $\begin{pmatrix} g_x\\ g_y \end{pmatrix}$ and $\begin{pmatrix} b_x\\ b_y \end{pmatrix}$ such that $Z_{5^-2^+}=h\begin{pmatrix} h_x\\ h_y \end{pmatrix}+g\begin{pmatrix} g_x\\ g_y \end{pmatrix}+b\begin{pmatrix} b_x\\ b_y \end{pmatrix}$ and $Z_{2^+3^+}=a\begin{pmatrix} a_x\\ a_y \end{pmatrix}-b\begin{pmatrix} b_x\\ b_y \end{pmatrix}$ in $\psi_2$. In addition, $Z_{5^-2^+}=g\begin{pmatrix} g_x\\ g_y \end{pmatrix}+b\begin{pmatrix} b_x\\ b_y \end{pmatrix}$ and $Z_{2^+3^+}=h\begin{pmatrix} h'_x\\ h'_y \end{pmatrix}+a\begin{pmatrix} a_x\\ a_y \end{pmatrix}-b\begin{pmatrix} b_x\\ b_y \end{pmatrix}$ in $\psi_3$. Thus, we can apply Lemma \ref{lem6} by taking $T=5^-$, $U=V=2^+$, $W=3^+$, $i=h$, $j=a$, $k=g$, and $m=b$. 

     We can compute the products of the determinants in Lemma \ref{lem6} by the directions of the blue arrows in Figure \ref{fig23}. But we can also compare Figure \ref{fig23} with Figure \ref{fig20} directly to see that they are in the same situation. Therefore, $\overline{\mathcal{O}_2}$ and $\overline{\mathcal{O}_3}$ are on different sides of $\sigma_h$ by Lemma \ref{lem6}.
    \end{proof}

\begin{cor} \label{cor4}
The interior of a Type 3 space is open in $\mathcal{C}_{10}(\delta_1, \delta_2, \delta_3, \delta_4, \delta_5)$.      
\end{cor}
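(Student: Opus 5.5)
The plan is to copy the proof of Corollary \ref{cor3}, with $\mathcal{O}_3$ in place of $\mathcal{O}_2$ and Lemma \ref{lem8} in place of Lemma \ref{lem7}. By symmetry (relabeling vertices and loops), it suffices to prove the claim for $\mathcal{O}_3$. As in Lemma \ref{lem4}, let $F_3:(0,\infty)^8\to\mathcal{O}_3$ send $(a,b,\dots,h)$ to the surface obtained by assigning these edge-lengths to the parallelograms of $\mathscr{D}_3^*$. By Theorem \ref{thm3}, $F_3$ lands in $\mathcal{C}_{10}(\delta_1,\delta_2,\delta_3,\delta_4,\delta_5)$, and it is clearly continuous.

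To get injectivity, take the local frame $(Z_{5^-2^+}, Z_{2^+3^+}, Z_{3^+4^+}, Z_{4^+5^+})$ from the proof of Lemma \ref{lem8}. It is defined on $\mathcal{O}_2$ and extends continuously to $\mathcal{O}_3$ through the interior of $\sigma_h$. Let $\psi_2,\psi_3$ and their matrices $M_2,M_3$ be as in that proof. Then $\psi_3$ factors through $F_3$ followed by the development map. Next, $M_2$ is invertible: the frame is a valid local frame on $\mathcal{O}_2$, which is open by Corollary \ref{cor3}, and by the Remark after Lemma \ref{lem2}, $M_2$ is the product of the invertible change-of-frame matrix from the frame of Lemma \ref{lem7} with an invertible matrix. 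The proof of Lemma \ref{lem8}, through Lemma \ref{lem6}, shows that $\det M_2$ and $\det M_3$ have opposite signs. Since $M_2$ and $M_3$ differ only in their first column and $\det M_2\neq 0$, this gives $\det M_3\neq 0$. Hence $\psi_3$ is an injective linear map, so $F_3$ is injective on $(0,\infty)^8$.

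Since $\mathcal{C}_{10}$ has real dimension $8$ by Theorem \ref{thm1}, invariance of domain shows that $F_3((0,\infty)^8)$ is open in $\mathcal{C}_{10}$. This image is exactly the set of surfaces with all coordinates positive, which is the interior of the Type 3 space in the sense of the statement. The wording ``interior'' presumably reflects that $\overline{\mathcal{O}_3}$ contains face points, and only $F_3$ of the open orthant is claimed to be open.

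The step I expect to need care is the identification of $\psi_3$ as a genuine linear map whose matrix differs from $M_2$ only in the $h$-column, with the column coefficients in the form stated in Lemma \ref{lem8}. That is, I must check that the extended developing-map description of the frame on $\mathcal{O}_3$ is consistent and that the vectors $h'$, $a$, $g$, $b$ and the other unit vectors are constant on all of $\mathcal{O}_3$, not just near $\sigma_h$. I would handle this by drawing the extended cutting paths directly on $\mathscr{A}_3$, as in Figure \ref{fig23}. Each component $Z$ is then a sum of signed edge-vectors of parallelograms crossed by the corresponding path. These edge directions depend only on the angles fixed by Equation \ref{eq1}, so they are constant throughout $\mathcal{O}_3$.
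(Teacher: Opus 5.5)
Your proposal is correct and is essentially the paper's own proof. You reduce to $\mathcal{O}_3$, take the frame and matrices $M_2$, $M_3$ from Lemma \ref{lem8}, get $\det M_2\neq 0$ from Corollary \ref{cor3}, transfer invertibility to $M_3$ through the opposite-sign determinants established via Lemma \ref{lem6}, and conclude by invariance of domain as in Lemma \ref{lem4}; your use of the Remark after Lemma \ref{lem2} to justify $\det M_2\neq 0$ just makes explicit a step the paper states tersely.
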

\begin{proof}
    Without loss of generality, we prove the result for $\mathcal{O}_3$. Consider the maps $\psi_2$ and $\psi_3$ in the proof of Lemma \ref{lem8}. Let $M_2$ and $M_3$ be the matrices that represent $\psi_2$ and $\psi_3$, respectively. By Corollary \ref{cor3}, $\mathcal{O}_2$ is open in $\mathcal{C}_{10}(\delta_1, \delta_2, \delta_3, \delta_4, \delta_5)$, so $M_2$ is invertible. By Lemma \ref{lem8}, the determinants of $M_2$ and $M_3$ have opposite signs, so $M_3$ is invertible. The rest of the proof is identical to that of Lemma \ref{lem4} in principle.    
\end{proof}

\begin{lemma} \label{lem9}
    The closures $\overline{\mathcal{O}_2}$ and $\overline{\mathcal{O}_4}$ are on different sides of $\sigma_e$.
\end{lemma}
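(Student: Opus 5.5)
I will follow the pattern of Lemma \ref{lem7} and Lemma \ref{lem8}. First I will confirm that $\mathscr{A}_2$ and $\mathscr{A}_4$ differ only by $l_e$. From Figure \ref{fig17}, $\mathscr{A}_4$ should arise from $\mathscr{A}_2$ by moving a pair of antipodal labeled vertices (or two pairs) across $l_e$ simultaneously, exactly as in Figure \ref{fig11}. Deforming $l_e$ back within its homotopy class then shows that $\sigma_e$ is a common face of $\overline{\mathcal{O}_2}$ and $\overline{\mathcal{O}_4}$.

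Next I will fix a consistent cutting and unfolding of the surfaces in $\mathcal{O}_2$, chosen so that the edges transverse to $l_e$ appear in as few components of the resulting local frame as possible. This frame has four components, for instance $(Z_{2^+3^+}, Z_{3^+4^+}, Z_{4^+5^+}, Z_{5^+2^-})$ or a cyclic variant as in Lemma \ref{lem8}. Ideally, the cut edge crossing $l_e$ crosses only $l_e$ and one other loop $l_j$, probably $l_f$ by the reflection symmetry of Figure \ref{fig22}. That component then reads $Z_{TU}=e\binom{e_x}{e_y}+j\binom{j_x}{j_y}$ in $\psi_2$. After the frame is extended continuously through the interior of $\sigma_e$ to $\mathcal{O}_4$, it reads $Z_{TU}=e\binom{e'_x}{e'_y}+j\binom{j_x}{j_y}$ in $\psi_4$, and $l_e$ does not appear in any other component. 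In that case Lemma \ref{lem5} applies directly. I only need to orient $l_e$ and $l_j$ in both arrangements and compare the rotation direction of the arrows, as in the first picture of Figure \ref{fig19}, to see that the two $2\times2$ determinants have opposite signs.

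If no such cut exists, because moving the vertices across $l_e$ forces the $e$-edge to migrate from one component to another, I will use Lemma \ref{lem6} instead. For that I need two components $Z_{TU}, Z_{VW}$ sharing a loop $l_m$ with opposite signs, with $e$ appearing in $Z_{TU}$ for $\mathscr{A}_2$ and in $Z_{VW}$ for $\mathscr{A}_4$. I will then read off the four determinant signs from the arrow orientations, as was done in Figure \ref{fig20} and Figure \ref{fig23}.

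I expect the main obstacle to be choosing the unfolding and tracking how the cutting edges pass through the loops after $l_e$ is moved. This is where the precise formulas for $\psi_4$ must be read off correctly, including which unit vectors stay constant and the sign with which each length enters. Once the frame formulas are established, the sign comparison is mechanical. As a byproduct, the argument of Corollary \ref{cor4} will then show that Type 4 spaces are open.
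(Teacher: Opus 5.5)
Your plan is correct and is essentially the paper's proof. The paper obtains $\mathscr{A}_4$ from $\mathscr{A}_2$ by moving $4^\pm$ and $5^\pm$ across $l_e$, and reuses the frame $(Z_{5^-2^+}, Z_{2^+3^+}, Z_{3^+4^+}, Z_{4^+5^+})$ from Lemma \ref{lem8}. In that frame the component $Z_{4^+5^+}$ involves only $e$ and $f$, so Lemma \ref{lem5} applies directly with $i=e$, $j=f$, the arrow orientations give determinants of opposite signs, and your Lemma \ref{lem6} fallback is never needed.
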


\begin{proof}
    Observe from Figure \ref{fig24} that $\mathscr{A}_4$ is obtained from $\mathscr{A}_2$ by moving the labeled vertices $4^+$, $4^-$, $5^+$, and $5^-$  across the loop $l_e$. Therefore, $\mathscr{A}_2$ and $\mathscr{A}_4$ differ only by $l_e$, so $\sigma_e$ is a face of both $\overline{\mathcal{O}_2}$ and $\overline{\mathcal{O}_4}$. 
    
    \begin{figure}[h]
    \centering
    \includegraphics[width=0.75\linewidth]{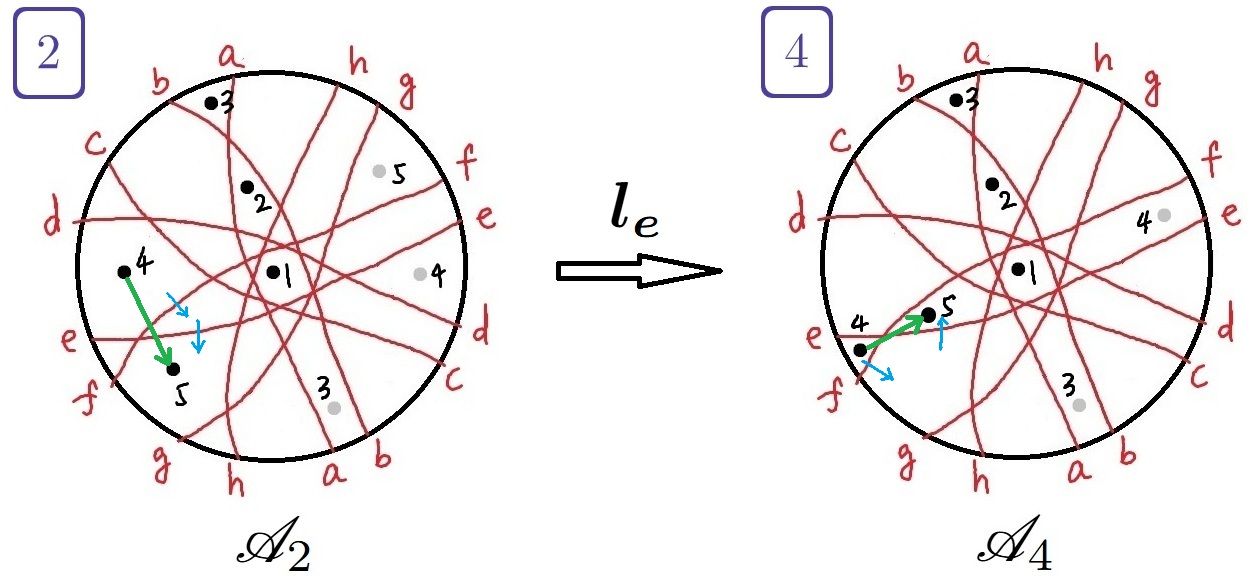}
    \caption{Visualizing the component $Z_{4^+5^+}$ with $\mathscr{A}_2$ and $\mathscr{A}_4$.}
    \label{fig24}
    \end{figure}
    
    To see that $\overline{\mathcal{O}_4}$ are on different sides of $\sigma_e$, we use the same local frame on $\mathcal{O}_2$ in the proof of Lemma \ref{lem8}, and extend it to $\mathcal{O}_4$ through the interior of $\sigma_e$. Let $\psi_2$ and $\psi_4$ be the maps from $[0,\infty)^8$ to this local frame via surfaces in $\mathcal{O}_2$ and $\mathcal{O}_4$, respectively. By visualizing the component $Z_{4^+5^+}$ of this local frame with $\mathscr{A}_2$ and $\mathscr{A}_4$ in Figure \ref{fig24}, we can find unit vectors $\begin{pmatrix} e_x \\ e_y \end{pmatrix}$, $\begin{pmatrix} e'_x \\ e'_y \end{pmatrix}$ and $\begin{pmatrix} f_x \\ f_y \end{pmatrix}$ such that $Z_{4^+5^+}=e\begin{pmatrix} e_x\\ e_y \end{pmatrix}+f\begin{pmatrix} f_x\\ f_y \end{pmatrix}$ in $\psi_2$ and $Z_{4^+5^+}=e\begin{pmatrix} e'_x\\ e'_y \end{pmatrix}+f\begin{pmatrix} f_x\\ f_y \end{pmatrix}$ in $\psi_4$. 

    Then we orient $l_e$ and $l_f$ based on the formulas of $Z_{4^+5^+}$ above, and mark their orientations by blue arrows in Figure \ref{fig24}. We can see that $\begin{vmatrix}e_x & f_x \\ e_y & f_y\end{vmatrix}>0$ and $\begin{vmatrix}e'_x & f_x \\ e'_y & f_y\end{vmatrix}<0$, so the result follows from Lemma \ref{lem5} by taking $T=4^+$, $U=5^+$, $i=e$ and $j=f$.
\end{proof}

By an argument similar to Corollary \ref{cor3} and \ref{cor4}, we obtain the following result:

\begin{cor} \label{cor5} 
    The interior of a Type 4 space is open in $\mathcal{C}_{10}(\delta_1, \delta_2, \delta_3, \delta_4, \delta_5)$.  
\end{cor}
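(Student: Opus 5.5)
The plan mirrors the proofs of Corollary \ref{cor3} and Corollary \ref{cor4}, with Lemma \ref{lem9} playing the role of Lemma \ref{lem7} (resp. Lemma \ref{lem8}). It suffices to treat $\mathcal{O}_4$, since any Type 4 space arises from a loop arrangement combinatorically equivalent to $\mathscr{A}_4$. Relabeling vertices and loops transports the whole argument.

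\textbf{Step 1 (the frame on $\mathcal{O}_2$).} Take the local frame $(Z_{5^-2^+}, Z_{2^+3^+}, Z_{3^+4^+}, Z_{4^+5^+})$ on $\mathcal{O}_2$ used in the proofs of Lemma \ref{lem8} and Lemma \ref{lem9}. Let $\psi_2$ and $\psi_4$ be the maps from $[0,\infty)^8$ to this frame via surfaces in $\mathcal{O}_2$ and $\mathcal{O}_4$, as in Lemma \ref{lem9}, and let $M_2$ and $M_4$ be the representing matrices. By Corollary \ref{cor3}, $\mathcal{O}_2$ is open in $\mathcal{C}_{10}(\delta_1,\dots,\delta_5)$. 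I would make this say precisely that $M_2$ is invertible. The proof of Corollary \ref{cor3} gave invertibility for the frame inherited from $\mathcal{O}_1$. By the Remark after Lemma \ref{lem2}, changing to another local frame multiplies both matrices on the left by a fixed invertible matrix, so $M_2$ remains invertible in the frame of Lemma \ref{lem8}.

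\textbf{Step 2 (sign comparison).} $\mathscr{A}_2$ and $\mathscr{A}_4$ differ only by $l_e$, so $M_2$ and $M_4$ differ only in the column corresponding to $e$. The proof of Lemma \ref{lem9} shows, through Lemma \ref{lem5}, that $\det M_2$ and $\det M_4$ have opposite signs. Since $\det M_2\neq 0$, it follows that $\det M_4\neq 0$, so $\psi_4$ is a continuous linear bijection.

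\textbf{Step 3 (openness).} As in Lemma \ref{lem4}, $\psi_4$ factors through $F_4:(0,\infty)^8\to\mathcal{O}_4$ followed by the frame map. Hence $F_4$ is continuous and injective. Since $\mathcal{C}_{10}(\delta_1,\dots,\delta_5)$ has real dimension $8$ by Theorem \ref{thm1}, invariance of domain gives that $F_4((0,\infty)^8)$, the interior of $\mathcal{O}_4$, is open.

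\textbf{Main obstacle.} The delicate point is the legitimacy of $\psi_4$. The frame on $\mathcal{O}_4$ is defined by analytic continuation through the interior of $\sigma_e$, not by an actual cutting of the surfaces. One must check that this continuation is single-valued on all of $\mathcal{O}_4$ and that $\psi_4$ is genuinely linear in the coordinates $(a,\dots,h)$. I would argue this exactly as in Lemma \ref{lem2}. The developing paths vary continuously without changing their homotopy class, and $\mathcal{O}_4\cong(0,\infty)^8$ is simply connected, so the extended frame is well defined. Its components are then fixed combinations of constant unit vectors weighted by the edge-lengths, read off from the oriented loops in $\mathscr{A}_4$ as in Figure \ref{fig24}.
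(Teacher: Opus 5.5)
Your proposal is correct and is essentially the paper's argument. The paper proves this corollary by repeating the proofs of Corollaries \ref{cor3} and \ref{cor4}: $M_2$ is invertible by Corollary \ref{cor3}, Lemma \ref{lem9} forces $\det M_4$ to have the opposite sign, and invariance of domain finishes. You also transport the invertibility of $M_2$ from the frame of Lemma \ref{lem7} to the frame used in Lemmas \ref{lem8} and \ref{lem9} via the Remark after Lemma \ref{lem2}; the paper's proof of Corollary \ref{cor4} passes over this frame mismatch silently, and you handle it correctly.
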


\subsection*{Type 3 Spaces}

Without loss of generality, we can assume that $\mathcal{O}$ is the space $\mathcal{O}_3$ arising from $\mathscr{A}_3$. Symmetries among loops in $\mathscr{A}_3$ are less obvious. In Figure \ref{fig25}, we sketch $\mathscr{A}_3$ in the first picture. Then we push all the labeled vertices in $\mathscr{A}_3$ to the equator of the sphere as the second picture shows. 

    \begin{figure}[h]
    \centering    \includegraphics[width=0.85\linewidth]{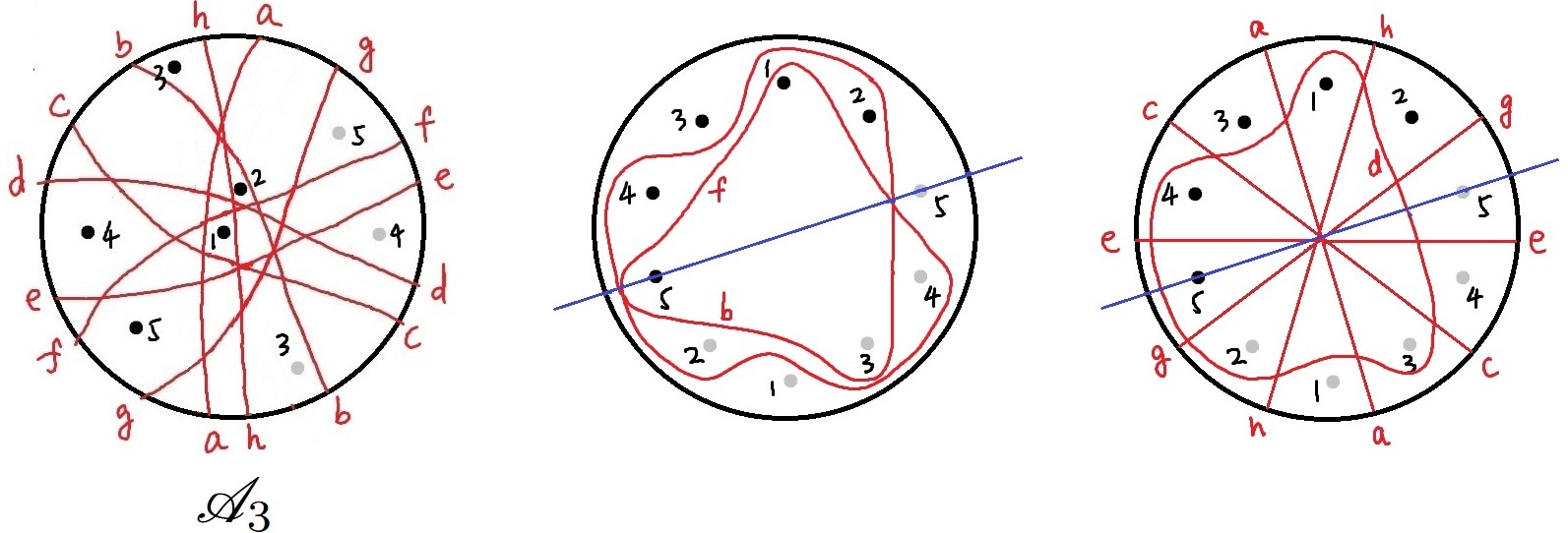}
    \caption{When pushing all the labeled vertices to the equator, loops in $\mathscr{A}_3$ are moved or distorted so that their homotopy classes remain unchanged.}
    \label{fig25}
    \end{figure}

When pushing the vertices, we also move or distort some loops in $\mathscr{A}_3$ so that their homotopy classes remain the same. After that, we draw the loops $l_b$ and $l_f$ in the second picture (in which both loops are distorted), and the remaining loops in the third picture (in which $l_d$ is distorted). To verify both pictures, it would be helpful to consider which vertices are on the same side of each loop in $\mathscr{A}_3$.

The second and the third pictures in Figure \ref{fig25} has a reflection symmetry about the dark blue axis. We can thus divide the loops in $\mathscr{A}_3$ into five symmetry classes: 1) $l_a$; 2) $l_b$ and $l_f$; 3) $l_c$ and $l_h$; 4) $l_d$; 5) $l_e$ and $l_g$. Since $\overline{\mathcal{O}_2}$ is on the other side of $\sigma_h$ with respect to $\overline{\mathcal{O}_3}$ by Lemma \ref{lem9}, it suffices to discuss the cases when $\sigma$ is $\sigma_a$, $\sigma_b$, $\sigma_d$ or $\sigma_e$

\begin{lemma} \label{lem10} 
    There is a Type 3 space whose closure is on the other side of $\sigma_a$ with respect to $\overline{\mathcal{O}_3}$.
\end{lemma}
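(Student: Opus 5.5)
The idea is to produce $\mathscr{A}_3'$ by pushing a suitable set of labeled vertices across $l_a$, exactly as $\mathscr{A}_2$ was obtained from $\mathscr{A}_1$ in Figure \ref{fig11}. This yields an adjacent arrangement that differs from $\mathscr{A}_3$ only in $l_a$. I would then check that it is combinatorically equivalent to $\mathscr{A}_3$ and use Lemma \ref{lem5} or Lemma \ref{lem6} to show that the two closures lie on different sides of $\sigma_a$.

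\textbf{Step 1: build the neighbor.} In the equatorial picture of Figure \ref{fig25}, $l_a$ is the loop fixed by the reflection symmetry. It cuts the $10$ labeled vertices into two antipodal halves. I would take the pair of antipodal vertex pairs lying in the lune between $l_a$ and a neighbouring loop, most likely the pairs adjacent to $l_a$ along the equator, and move them simultaneously across $l_a$. The resulting arrangement $\mathscr{A}_3'$ differs from $\mathscr{A}_3$ only in $l_a$, by the same dragging argument as in the paragraph before Figure \ref{fig11}.

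\textbf{Step 2: identify its type.} Next I would verify that $\mathscr{A}_3'$ is Type 3. For each loop, list the partition of the labeled vertices into its two sides, and compare with the partitions for $\mathscr{A}_3$ after a relabeling of vertices (for instance, a rotation of the equatorial picture that exchanges the moved vertices with their mirror images). Since $l_a$ is self-symmetric, I expect $\mathscr{A}_3'$ to be the mirror of $\mathscr{A}_3$ under a relabeling that sends $l_a$ to itself. Lemma \ref{lem10} itself does not require openness; that would follow afterwards as in Corollary \ref{cor4}.

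\textbf{Step 3: the sign computation.} Choose a consistent unfolding of surfaces in $\mathcal{O}_3$. I would take the frame used for $\mathcal{O}_2$ in Lemma \ref{lem8}, which extends across $\sigma_h$ to $\mathcal{O}_3$, and if that is awkward, build a new frame from green segments along the equator. Extend the frame across the interior of $\sigma_a$ to $\mathcal{O}_3'$. Then find a component $Z_{TU}$ whose expression involves $a$ and exactly one other length $j$ in both formulas; in that case Lemma \ref{lem5} applies. If $a$ migrates between two components, as $h$ did in Lemma \ref{lem8}, Lemma \ref{lem6} applies instead. Finally, orient $l_a$ and the relevant loops by the blue-arrow convention and read off the signs of the determinants, comparing with Figures \ref{fig19} and \ref{fig20}.

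\textbf{Main obstacle.} I expect the hard part to be choosing the frame so that only one or two components involve $a$, with the others unchanged. A generic frame gives matrices differing in a column that meets many rows, and then neither lemma applies directly. If that happens, I would use the Remark after Lemma \ref{lem2} to switch to a better frame, choosing green cuts that cross $l_a$ as few times as possible: ideally one edge crossing it once in $\mathscr{A}_3$, and at most two edges in $\mathscr{A}_3'$.
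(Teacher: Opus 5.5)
Your skeleton matches the paper's: find an arrangement adjacent to $\mathscr{A}_3$ across $\sigma_a$, check that it is Type 3, and compare determinant signs via Lemma~\ref{lem5} or~\ref{lem6} in a suitable frame. However, the one concrete construction you commit to does not work.

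A lune cut out by $l_a$ and another loop $l_j$ contains exactly one vertex from each antipodal pair on which $l_a$ and $l_j$ disagree (for the other lune type, agree), and no other labeled vertices. Suppose you push those vertices and their antipodes across $l_a$. This flips the side of exactly those pairs. Afterwards $l_a$ and $l_j$ induce the same partition of the labeled vertices, so $l_a$ becomes homotopic to $l_j$ and the result is not a loop arrangement at all. The same reasoning shows that the pairs $2^\pm,3^\pm$ moved in Figure~\ref{fig11} are not the contents of such a lune.

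What is needed is to send $l_a$ to a homotopy class not used by the other seven loops, and choosing that class is the substance of the lemma. The paper moves $l_a$ past the pair $5^\pm$, perturbing $l_g$ together with $5^\pm$. Afterwards $5^+$ lies on the same side of $l_a$ as $1^+$ and on the opposite side from $4^+$.

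Your Steps 2 and 3 are left as expectations, yet they are the entire proof, and neither outcome is automatic. The second example in Figure~\ref{fig19} is an adjacent arrangement on the \emph{same} side. The neighbours the paper finds for $\overline{\mathcal{O}_3}$ across $\sigma_b$, $\sigma_d$, $\sigma_h$ are of Types 1, 4 and 2, not 3.

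The paper also does not reuse the Lemma~\ref{lem8} frame. It builds a new unfolding of $\mathcal{O}_3$ giving the frame $(Z_{4^-2^+},Z_{2^+1^+},Z_{1^+5^+},Z_{5^+4^+})$. Suppressing unit vectors, $Z_{1^+5^+}=a+c+e$ and $Z_{5^+4^+}=f-e$ on $\mathcal{O}_3$ become $c+e$ and $a+f-e$ on the neighbour, with a new unit vector for the $a$-term. So $a$ migrates between two components, and Lemma~\ref{lem6} applies with $T=1^+$, $U=V=5^+$, $W=4^+$ and $(i,j,k,m)=(a,c,f,e)$. The orientations reproduce Figure~\ref{fig20}, which gives opposite sides.

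Type 3 is then checked by pushing the labeled vertices of the new arrangement to the equator. Ignoring labels, the resulting pictures are those of Figure~\ref{fig25} rotated by $\pi$.
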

\begin{proof}
    Consider the loop arrangement $\mathscr{A}'$ in the third picture of Figure \ref{fig26}. It is obtained from $\mathscr{A}_3$ by moving the loop $l_a$ and perturbing the loop $l_g$ with the labeled vertices $5^+$ and $5^-$. Note that this does not change the homotopy class of $l_g$, so $\mathscr{A}'$ and $\mathscr{A}_3$ differ only by $l_a$. Let $\mathcal{O}'$ be the space arising from $\mathscr{A}'$ and $\overline{\mathcal{O}'}$ be its closure. Then $\sigma_a$ is a common face of $\overline{\mathcal{O}_3}$ and $\overline{\mathcal{O}'}$.

    \begin{figure}[h]
    \centering
    \includegraphics[width=0.95\linewidth]{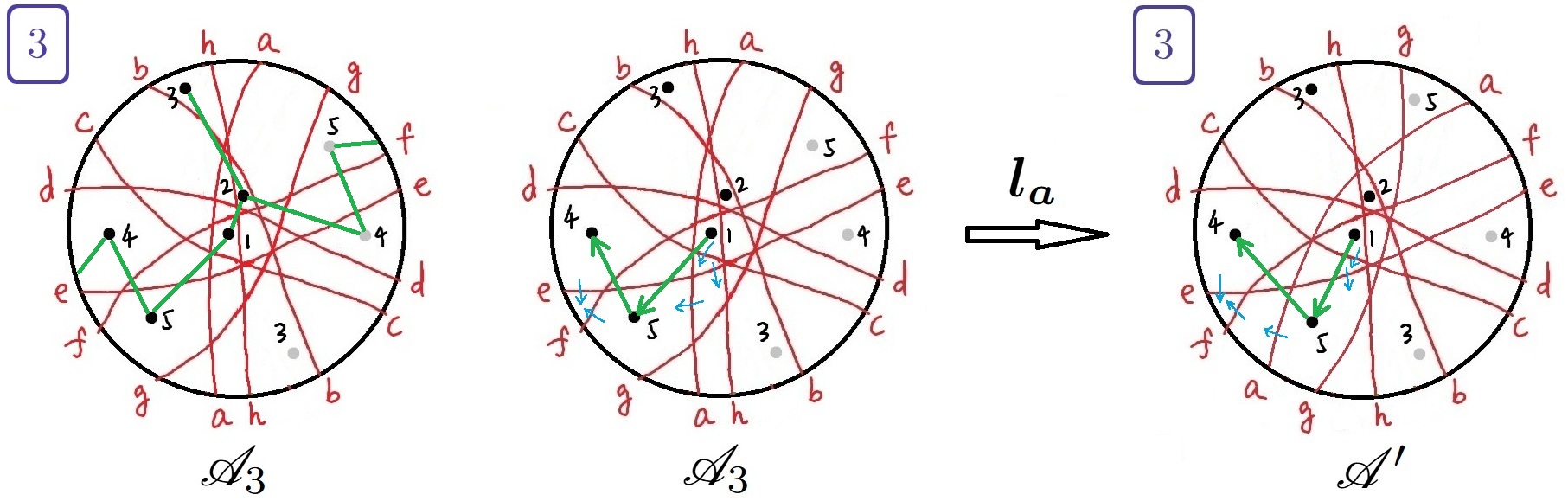}
    \caption{The unfolding demonstrated in the first picture gives rise to a local frame, whose components $Z_{1^+5^+}$ and $Z_{5^+4^+}$ are visualized with $\mathscr{A}_3$ and $\mathscr{A}'$.}
    \label{fig26}
    \end{figure}

    Consider the way of cutting and unfolding surfaces in $\mathcal{O}_3$ demonstrated in the first picture of Figure \ref{fig26}. The green segments connecting $4^+$ to $2^-$, $2^-$ to $1^-$, and $1^-$ to $5^-$ are not fully visible, but we can see their antipodal images. This unfolding gives rise to a local frame $(Z_{4^-2^+}$, $Z_{2^+1^+}$, $Z_{1^+5^+}$, $Z_{5^+4^+})$ on $\mathcal{O}_3$, and is extended to $\mathcal{O}'$ through the interior of $\sigma_a$. In Figure \ref{fig26}, we visualize the components $Z_{1^+5^+}$ and $Z_{5^+4^+}$ of this local frame with $\mathscr{A}_3$ and $\mathscr{A}'$.
    
    To show that $\overline{\mathcal{O}_3}$ and $\overline{\mathcal{O}'}$ are on different sides of $\sigma_3$, we orient the loops $l_a$, $l_c$, $l_f$ and $l_e$ in $\mathscr{A}_3$ and $\mathscr{A}'$ according to the blue arrows in Figure \ref{fig26}. Let $\psi_3$ and $\psi'$ be the maps from $[0,\infty)^8$ to the local frame above via surfaces in $\mathcal{O}_3$ and $\mathcal{O}'$, respectively. Then we can see that there exist unit vectors $\begin{pmatrix} a_x\\ a_y \end{pmatrix}$, $\begin{pmatrix} a'_x\\ a'_y \end{pmatrix}$, $\begin{pmatrix} c_x\\ c_y \end{pmatrix}$, $\begin{pmatrix} f_x\\ f_y \end{pmatrix}$ and $\begin{pmatrix} e_x\\ e_y \end{pmatrix}$ such that $Z_{1^+5^+}=a\begin{pmatrix} a_x\\ a_y \end{pmatrix}+c\begin{pmatrix} c_x\\ c_y \end{pmatrix}+e\begin{pmatrix} e_x\\ e_y \end{pmatrix}$ and $Z_{5^+4^+}=f\begin{pmatrix} f_x\\ f_y \end{pmatrix}-e\begin{pmatrix} e_x\\ e_y \end{pmatrix}$ in $\psi_3$. In addition, $Z_{1^+5^+}=c\begin{pmatrix} c_x\\ c_y \end{pmatrix}+e\begin{pmatrix} e_x\\ e_y \end{pmatrix}$ and  $Z_{5^+4^+}=a\begin{pmatrix} a'_x\\ a'_y \end{pmatrix}+f\begin{pmatrix} f_x\\ f_y \end{pmatrix}-e\begin{pmatrix} e_x\\ e_y \end{pmatrix}$ in $\psi'$.

    Therefore, we can apply Lemma \ref{lem6} by taking $T=1^+$, $U=V=5^+$, $W=4^+$, $l_i=l_a$, $l_j=l_c$, $l_k=l_f$ and $l_m=l_e$. This is exactly the same situation as Figure \ref{fig20}, so $\overline{\mathcal{O}_3}$ and $\overline{\mathcal{O}'}$ are on different sides of $\sigma_a$.

    Finally, we show that $\mathcal{O}'$ is a Type 3 space by showing that $\mathscr{A}'$ is combinatorically equivalent to $\mathscr{A}_3$. We move the labeled vertices in $\mathscr{A}'$ to the equator of the sphere as Figure \ref{fig27} shows, and move or distort loops if necessary so that their homotopy classes remain unchanged. One can verify the second and the third pictures of Figure \ref{fig27} by comparing the vertices on the same side of each loop in $\mathscr{A}'$. Then we compare these two pictures with the last two pictures in Figure \ref{fig25}, we see that they differ by a rotation by $\pi$ when we ignore all the labels. 
\end{proof}

\begin{figure}[h]
    \centering
    \includegraphics[width=0.9\linewidth]{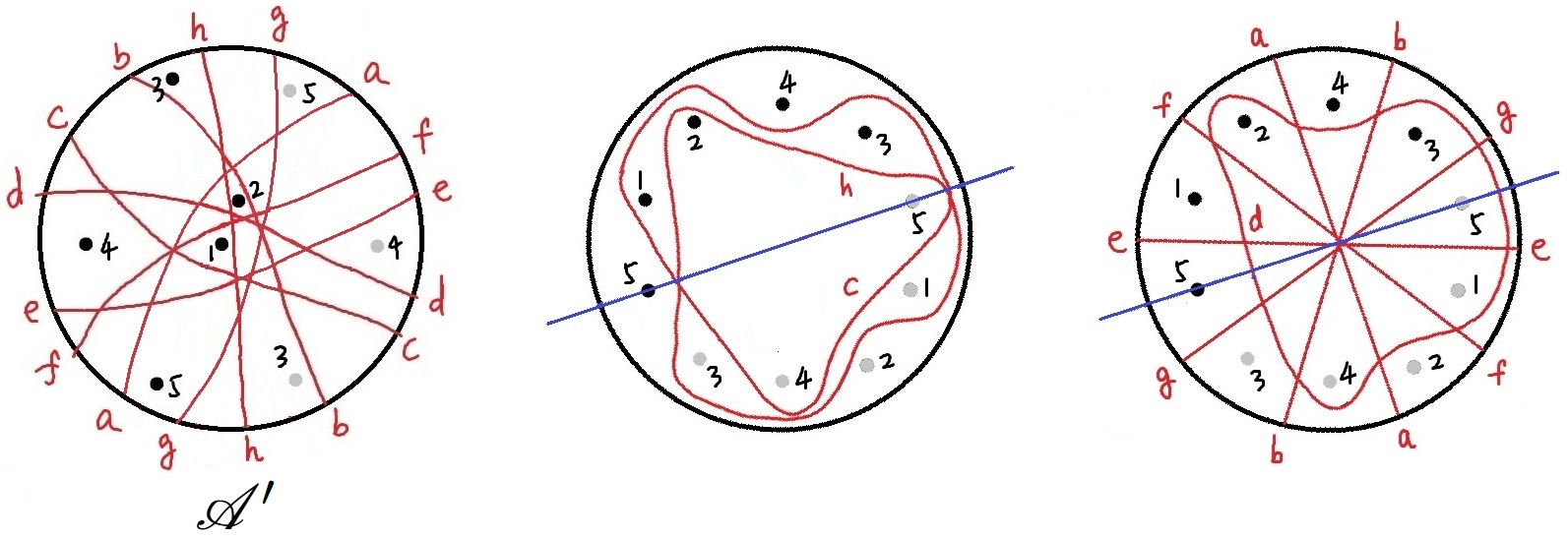}
    \caption{The loop arrangement $\mathscr{A}'$ is combinatorically equivalent to $\mathscr{A}_3$.}
    \label{fig27}
\end{figure}

\begin{lemma} \label{lem11}
    There is a Type 1 space whose closure is on the other side of $\sigma_b$ with respect to $\overline{\mathcal{O}_3}$.
\end{lemma}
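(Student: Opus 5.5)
The plan follows the pattern of Lemma \ref{lem10}. First I will construct an explicit loop arrangement $\mathscr{A}'$ adjacent to $\mathscr{A}_3$ that differs only in $l_b$. The natural choice is to push $l_b$ across a pair of antipodal clusters of labeled vertices on the side where $l_b$ currently separates them. Working in the equatorial picture of Figure \ref{fig25}, where $l_b$ is drawn distorted together with $l_f$, I will move $l_b$ across the two antipodal pairs it borders most closely, for instance $\{1^\pm,2^\pm\}$ or whichever pair is adjacent to $l_b$ there. If needed, I will also perturb one other loop without changing its homotopy class, as was done with $l_g$ in Lemma \ref{lem10}. Once the new homotopy class of $l_b$ is fixed, $\sigma_b$ is a common face of $\overline{\mathcal{O}_3}$ and $\overline{\mathcal{O}'}$.

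Second, I will show that $\mathscr{A}'$ is combinatorially equivalent to $\mathscr{A}_1$. For each loop, I will list which labeled vertices lie on each of its sides and compare these lists with those of $\mathscr{A}_1$. The target is the structure of $\mathscr{A}_1$: four pairs of loops, each pair "bracketing" a pair of consecutive vertices in a cyclic order $2^+,3^+,4^+,5^+,2^-,\dots$, with $1^\pm$ sitting at the centre. I expect that moving $l_b$ destroys the nested structure characteristic of Type 3 and recovers this cyclic bracket pattern. If the first choice of move does not give Type 1, I will move $l_b$ across the complementary cluster instead.

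Third, I will prove the two closures lie on different sides of $\sigma_b$. I will take a consistent cutting and unfolding of surfaces in $\mathcal{O}_3$, for example the one in Figure \ref{fig26} giving the frame $(Z_{4^-2^+},Z_{2^+1^+},Z_{1^+5^+},Z_{5^+4^+})$, or a variant chosen so that $l_b$ enters as few components as possible. I will extend this frame through the interior of $\sigma_b$ to $\mathcal{O}'$, and orient $l_b$ together with the loops it pairs with. There are two cases:
\begin{itemize}
\item If $l_b$ enters only one component, and with the same companion loop on both sides, I will apply Lemma \ref{lem5} and compare the signs of the $2\times2$ determinants by reading the blue arrows, as in Figure \ref{fig19}.
\item If moving $l_b$ transfers it from one component to another, as happened with $l_a$ in Lemma \ref{lem10}, I will apply Lemma \ref{lem6}. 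Here I will identify $T,U,V,W$ and the roles $l_i=l_b$, $l_j,l_k,l_m$, then check that the configuration matches Figure \ref{fig20}, or else compute the two products of determinants directly.
\end{itemize}

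The main obstacle is the first two steps: finding the right move of $l_b$ and verifying combinatorial equivalence with $\mathscr{A}_1$, since the symmetries of $\mathscr{A}_3$ are hidden. For that check I will rely on the side-partition data of each loop rather than on pictures. The determinant-sign computation should be routine once the frame and orientations are fixed. Finally, this lemma will imply, as in Corollaries \ref{cor3}–\ref{cor4}, consistency of the openness of the Type 3 space with that of Type 1 spaces.
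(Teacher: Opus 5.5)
Your outline follows the paper's route: an arrangement $\mathscr{A}'$ differing from $\mathscr{A}_3$ only in $l_b$, a check that it is of Type 1, and a sign comparison via Lemma~\ref{lem5} in a frame extended across $\sigma_b$. There is a gap, however. The proposal leaves open the two concrete ingredients that make up the proof, and the choices you suggest for them do not work.

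\emph{The move.} The side data in $\mathscr{A}_3$ can be read off from the paper's frame formulas. The relevant ones are $Z_{2^+3^+}$ in Lemma~\ref{lem8}, $Z_{2^+1^+}=d(\cdot)+h(\cdot)+f(\cdot)$, $Z_{1^+5^+}=a(\cdot)+c(\cdot)+e(\cdot)$, $Z_{5^+4^+}$ (which involves only $e,f$), and $Z_{2^+5^-}=b(\cdot)+g(\cdot)$. They show that $l_b$ has $\{1^+,2^+,3^-,4^+,5^+\}$ on one side, while $l_c$ has $\{1^+,2^+,3^+,4^-,5^-\}$. Pushing $l_b$ across $1^\pm,2^\pm$ therefore makes it homotopic to $l_c$, so the result is not a loop arrangement. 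Moving it across the complementary pairs $3^\pm,4^\pm,5^\pm$ instead gives the same homotopy class: exchanging the sides of a set of antipodal pairs has the same effect on a great circle's class as exchanging the complementary set.

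What is needed is to exchange the sides of $1^\pm$ and $3^\pm$ (equivalently of $2^\pm,4^\pm,5^\pm$), so that the new $l_b$ separates $1^+$ from $2^+,3^+,4^+,5^+$. The eight side partitions are then exactly those of $\mathscr{A}_1$, with the labels $b$ and $h$ interchanged. (In $\mathscr{A}_1$ each loop cuts the cyclic sequence $2^+,3^+,4^+,5^+,2^-,\dots$ in one gap, with $1^+$ on either side.) So the Type 1 check becomes immediate.

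\emph{The frame.} Take the frame $(Z_{4^-2^+},Z_{2^+1^+},Z_{1^+5^+},Z_{5^+4^+})$ of Lemma~\ref{lem10}. In $\psi_3$, $l_b$ enters only through $Z_{4^-2^+}=-(Z_{2^+5^-}+Z_{5^-4^-})$, which involves four loops: $l_b,l_e,l_f,l_g$. After the move, $l_b$ also separates $1^+$ from $2^+$ and from $5^+$. So neither the pattern of Lemma~\ref{lem5} nor that of Lemma~\ref{lem6} is available in this frame.

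The missing idea is the paper's change of frame. Replace $Z_{5^+4^+}$ by $Z_{2^+5^-}=-(Z_{5^-4^-}+Z_{4^-2^+})$, which equals $b(\cdot)+g(\cdot)$ on both sides of $\sigma_b$. Then apply Lemma~\ref{lem5} with $i=b$, $j=g$, reading the opposite signs of the two $2\times 2$ determinants from the orientations.

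This works even though $l_b$ still appears in other components (in $\psi'$ it appears in all four). Lemma~\ref{lem5} only needs two things: the rows of $Z_{TU}$ vanish outside the columns of $i$ and $j$, so the determinant factors block-triangularly; and $M,M'$ agree off the $i$-column. Your criterion that $l_b$ enter only one component is therefore stronger than necessary, and it would have discarded the paper's frame.
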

\begin{proof}
    Consider the loop arrangement $\mathscr{A}'$ obtained from $\mathscr{A}_3$ by moving the loops $l_b$, $l_g$ and the labeled vertices $2^+$ and $2^-$ to their positions in the second picture of Figure \ref{fig28}. Note that this only changes the homotopy class of $l_b$. Let $\mathcal{O}'$ be the space arising from $\mathscr{A}'$ and $\overline{\mathcal{O}'}$ be its closure. Then $\sigma_b$ is a common face of $\overline{\mathcal{O}_3}$ and $\overline{\mathcal{O}'}$.

    \begin{figure}[h]
    \centering
    \includegraphics[width=0.95\linewidth]{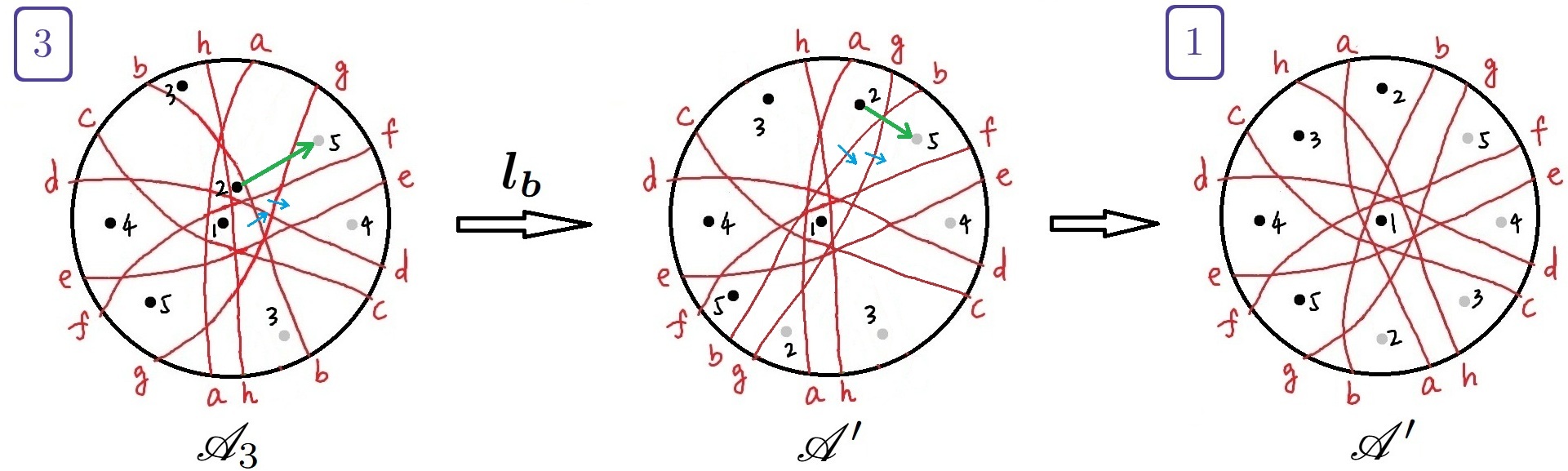}
    \caption{Visualizing $Z_{2^+5^-}$ on $\mathscr{A}_3$ and $\mathscr{A}'$, which is combinatorically equivalent to $\mathscr{A}_3$.}
    \label{fig28}
\end{figure}


    Consider the local frame $(Z_{4^-2^+}$, $Z_{2^+1^+}$, $Z_{1^+5^+}$, $Z_{5^+4^+})$ on $\mathcal{O}_3$ in Lemma \ref{lem10}. If we replace the component $Z_{5^+4^+}$ by $Z_{5^-4^-}$ and then by $-(Z_{5^-4^-}+Z_{4^-2^+})=Z_{2^+5^-}$, we get another frame on $\mathcal{O}_3$. We extend this local frame to $\mathcal{O}'$ through the interior of $\sigma_b$, and visualize the component $Z_{2^+5^-}$ with $\mathscr{A}_3$ and $\mathscr{A}'$ in Figure \ref{fig28}.

    To see that $\overline{\mathcal{O}_3}$ and $\overline{\mathcal{O}'}$ are on different sides of $\sigma_b$, we orient the loops $l_b$ and $l_g$ in $\mathscr{A}_3$ and $\mathscr{A}'$ according to the blue arrows in Figure \ref{fig28}. Let $\psi_3$ and $\psi'$ be the maps from $[0,\infty)^8$ to the above local frame via surfaces in $\mathscr{A}_3$ and $\mathscr{A}'$, respectively. Then we can observe from Figure \ref{fig28} that there exist unit vectors $\begin{pmatrix} b_x\\ b_y \end{pmatrix}$, $\begin{pmatrix} b'_x\\ b'_y \end{pmatrix}$ and $\begin{pmatrix} g_x\\ g_y \end{pmatrix}$ such that $Z_{2^+5^-}=b\begin{pmatrix} b_x\\ b_y \end{pmatrix}+g\begin{pmatrix} g_x\\ g_y \end{pmatrix}$ in $\psi_3$ and $Z_{2^+5^-}=b\begin{pmatrix} b'_x\\ b'_y \end{pmatrix}+g\begin{pmatrix} g_x\\ g_y \end{pmatrix}$ in $\psi'$. Thus, we can apply Lemma \ref{lem5} by taking $U=2^+$, $V=5^-$, $i=b$ and $j=g$.

    According to the directions of the blue arrows in Figure \ref{fig28}, we can see that $\begin{vmatrix}b_x & g_x \\ b_y & g_y\end{vmatrix}<0$ and $\begin{vmatrix}b'_x & g_x \\ b'_y & g_y\end{vmatrix}>0$, so $\overline{\mathcal{O}_3}$ and $\overline{\mathcal{O}'}$ are on different sides of $\sigma_b$. 

    Finally, we show that $\mathcal{O}'$ is a Type 1 space. We just perturb some loops and labeled vertices in $\mathscr{A}'$ to their positions in the third picture of Figure \ref{fig28}. This does not change the homotopy class of any loop, so $\mathscr{A}'$ is combinatorically equivalent to $\mathscr{A}_1$. 
\end{proof}

\begin{lemma} \label{lem12}
    There is a Type $4$ space whose closure is on the other side of $\sigma_d$ with respect to $\overline{\mathcal{O}_3}$.
\end{lemma}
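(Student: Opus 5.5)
The plan follows the template of Lemmas \ref{lem10} and \ref{lem11}. I construct an explicit loop arrangement $\mathscr{A}'$ adjacent to $\mathscr{A}_3$ along $l_d$, show that the two closures lie on different sides of $\sigma_d$ via Lemma \ref{lem5} or Lemma \ref{lem6}, and finally check that $\mathscr{A}'$ is combinatorically equivalent to $\mathscr{A}_4$.

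First, I work in the equatorial picture of $\mathscr{A}_3$ (the third picture of Figure \ref{fig25}), where $l_d$ is distorted. In $\mathscr{A}_3$, the loop $l_d$ separates the labeled vertices into two antipodally symmetric groups. I form $\mathscr{A}'$ by moving one antipodal pair of labeled vertices (the pair adjacent to $l_d$ on the equator, which I expect to be $\pm 3$ or $\pm 4$) across $l_d$, together with its antipode. At the same time I perturb any neighboring loop, such as $l_c$ or $l_h$, whose homotopy class would otherwise change, exactly as $l_g$ was perturbed in Lemma \ref{lem10}. This changes only the homotopy class of $l_d$, so $\mathscr{A}_3$ and $\mathscr{A}'$ are adjacent, and $\sigma_d$ is a common face of $\overline{\mathcal{O}_3}$ and $\overline{\mathcal{O}'}$.

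Second, I take the local frame $(Z_{4^-2^+}, Z_{2^+1^+}, Z_{1^+5^+}, Z_{5^+4^+})$ on $\mathcal{O}_3$ from Lemma \ref{lem10}. If necessary, I modify it by invertible substitutions as in Lemma \ref{lem11}, for instance replacing a component by its antipodal version or by a sum of components. The goal is a component $Z_{TU}$ whose developed segment crosses $l_d$ and exactly one other loop $l_j$ in both arrangements. I then extend this frame to $\mathcal{O}'$ through the interior of $\sigma_d$, orient $l_d$ and $l_j$ by blue arrows, and read off the signs of $\begin{vmatrix} d_x & j_x\\ d_y & j_y\end{vmatrix}$ and $\begin{vmatrix} d'_x & j_x\\ d'_y & j_y\end{vmatrix}$, as in Figure \ref{fig19}. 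If they are opposite, Lemma \ref{lem5} gives the claim. If the crossing pattern instead matches Figure \ref{fig20} (the segment of $Z_{TU}$ ceases to cross $l_d$ while that of $Z_{VW}$ begins to), I apply Lemma \ref{lem6} with $l_i=l_d$ and compare directly with Figure \ref{fig20}.

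Third, I verify that $\mathscr{A}'$ is equivalent to $\mathscr{A}_4$ up to relabeling. To do this, I push all labeled vertices of $\mathscr{A}'$ to the equator, distort the loops within their homotopy classes, and record for each loop which vertices lie on each side. I then compare the result with the corresponding equatorial picture of $\mathscr{A}_4$, which is obtained from $\mathscr{A}_2$ by moving $\pm4,\pm5$ across $l_e$ (Lemma \ref{lem9}). Once this holds, Corollary \ref{cor5} confirms that the new space is a genuine open Type 4 space.

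The main obstacle is finding the correct move of $l_d$ for which the resulting arrangement is of Type 4 rather than Type 1, 2 or 3, together with a frame component whose crossing pattern makes the determinant signs readable. To find the move, I would first try the partition of vertices induced by $l_d$ and match it against the partitions induced by the loops of $\mathscr{A}_4$.
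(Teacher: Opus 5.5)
Your outline (an arrangement $\mathscr{A}'$ adjacent to $\mathscr{A}_3$ along $l_d$, a determinant-sign comparison via Lemma \ref{lem5} or Lemma \ref{lem6}, and an identification of $\mathscr{A}'$ with $\mathscr{A}_4$) is the same skeleton the paper uses. However, the proposal stops exactly where the content of the lemma begins. The statement is an existence claim, and you never exhibit the witness. You do not fix the new position of $l_d$, and you do not write the coefficient formulas of any frame component in $\psi_3$ and $\psi'$. You do not determine a single sign. You also do not check that $\mathscr{A}'$ is a genuine loop arrangement, i.e.\ eight great circles in the prescribed classes, pairwise non-homotopic. The paper's closing discussion identifies this as the obstruction to such moves, so ``perturb any neighboring loop'' is not automatic. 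Nor do you check that $\mathscr{A}'$ is equivalent to $\mathscr{A}_4$. You yourself flag finding the move as ``the main obstacle.'' As written, this is a search strategy. Nothing in it shows that some position of $l_d$ lands in a Type 4 space on the other side of $\sigma_d$, rather than in a Type 1, 2 or 3 space or on the same side.

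The one concrete guess you offer also differs from the move that works in the paper. There the Lemma \ref{lem10} frame is used without modification. Write $u_d,u_h,u_e,u_f$ for the constant unit vectors. In $\psi_3$ one has $Z_{2^+1^+}=d\,u_d+h\,u_h+f\,u_f$ and $Z_{5^+4^+}=e\,u_e-f\,u_f$. In $\psi'$ one has $Z_{2^+1^+}=h\,u_h+f\,u_f$ and $Z_{5^+4^+}=d\,u'_d+e\,u_e-f\,u_f$.

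A path crosses $l_d$ an odd number of times exactly when its endpoints lie on opposite sides. So $1^+,2^+$ pass from opposite sides of $l_d$ to the same side, while $4^+,5^+$ pass from the same side to opposite sides. Hence one pair from $\{\pm1,\pm2\}$ and one from $\{\pm4,\pm5\}$ must switch sides of $l_d$ simultaneously. Moving a single antipodal pair cannot produce this pattern. In particular, moving $\pm3$ changes no frame component's crossing parity with $l_d$, since $3^{\pm}$ is not an endpoint of any component.

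With this move the paper applies Lemma \ref{lem6} with $T=2^+$, $U=1^+$, $V=5^+$, $W=4^+$, $i=d$, $j=h$, $k=e$, $m=f$. Note that $U\neq V$, so this is not literally the configuration of Figure \ref{fig20}. The paper reads off $\det(u_d,u_h)\det(u_e,u_f)>0$ and $\det(u'_d,u_e)\det(u_h,u_f)<0$ from the orientations. It then identifies $\mathscr{A}'$ with $\mathscr{A}_4$ by rotating $5^+$ to the center and comparing, loop by loop, which vertices lie on each side. These specific choices are what your proposal would have to supply.
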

\begin{proof}
    Consider the loop arrangement $\mathscr{A}'$ obtained by moving the loop $l_d$ to its position in the second picture of Figure \ref{fig29}. Let $\mathcal{O}'$ be the space arising from $\mathscr{A}'$ and $\overline{\mathcal{O}'}$ be its closure. Then $\sigma_d$ is a common face of $\overline{\mathcal{O}_3}$ and $\overline{\mathcal{O}'}$. 

    \begin{figure}[h]
    \centering
    \includegraphics[width=0.95\linewidth]{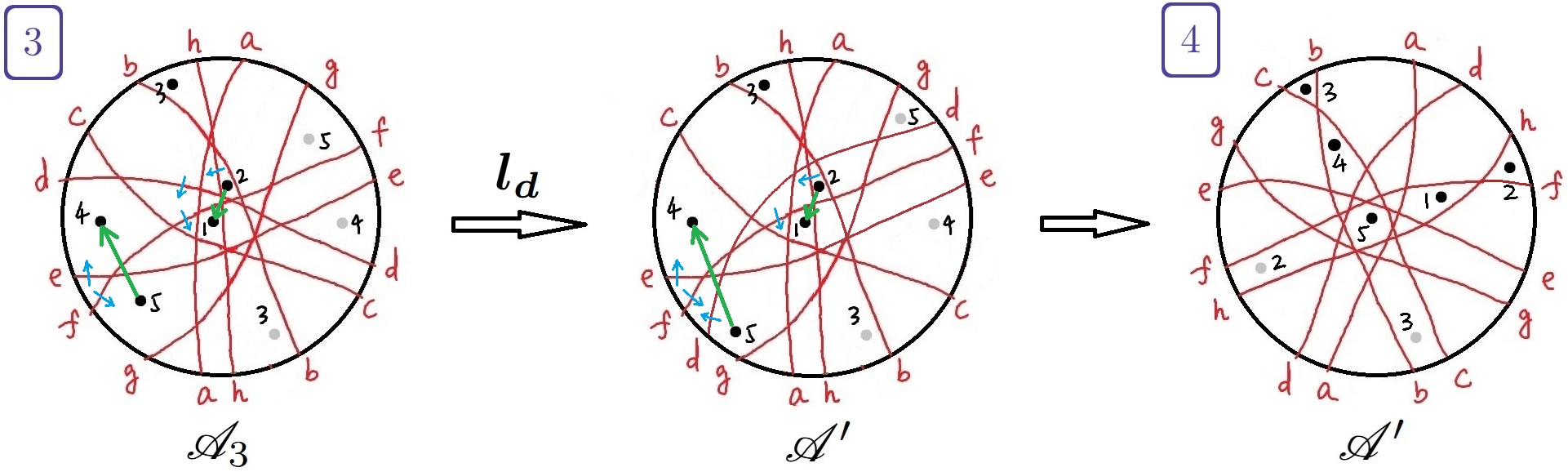}
    \caption{Visualizing $Z_{2^+1^+}$ and $Z_{5^+4^+}$ with $\mathscr{A}_3$ and $\mathscr{A}'$ (combinatorically equivalent to $\mathscr{A}_4$).}
    \label{fig29}
\end{figure}

    Consider the local frame in Lemma \ref{lem10} on $\mathcal{O}_3$ (and extended to $\mathcal{O}'$ through the interior of $\sigma_d$). The components $Z_{2^+1^+}$ and $Z_{5^+4^+}$ of this local frame are visualized with $\mathscr{A}_3$ and $\mathscr{A}'$ in Figure \ref{fig29}. Then we orient the loops $l_d$, $l_h$, $l_e$ and $l_f$ according to the blue arrows. Let $\psi_3$ and $\psi'$ be the maps from $[0,\infty)^8$ to the above local frame via surfaces in $\mathscr{A}_3$ and $\mathscr{A}'$, respectively. From Figure \ref{fig29}, we can see that there exist unit vectors $\begin{pmatrix} d_x\\ d_y \end{pmatrix}$, $\begin{pmatrix} d'_x\\ d'_y \end{pmatrix}$, $\begin{pmatrix} h_x\\ h_y \end{pmatrix}$, $\begin{pmatrix} e_x\\ e_y \end{pmatrix}$ and $\begin{pmatrix} f_x\\ f_y \end{pmatrix}$ such that $Z_{2^+1^+}=d\begin{pmatrix} d_x\\ d_y \end{pmatrix}+h\begin{pmatrix} h_x\\ h_y \end{pmatrix}+f\begin{pmatrix} f_x\\ f_y \end{pmatrix}$ and $Z_{5^+4^+}=e\begin{pmatrix} e_x\\ e_y \end{pmatrix}-f\begin{pmatrix} f_x\\ f_y \end{pmatrix}$ in $\psi_3$. In addition, $Z_{2^+1^+}=h\begin{pmatrix} h_x\\ h_y \end{pmatrix}+f\begin{pmatrix} f_x\\ f_y \end{pmatrix}$ and $Z_{5^+4^+}=d\begin{pmatrix} d'_x\\ d'_y \end{pmatrix}+e\begin{pmatrix} e_x\\ e_y \end{pmatrix}-f\begin{pmatrix} f_x\\ f_y \end{pmatrix}$  in $\psi'$. Therefore, we can apply Lemma \ref{lem6} by taking $T=2^+$, $U=1^+$, $V=5^+$, $W=4^+$, $i=d$, $j=h$, $k=e$ and $m=f$.

    According to the directions of the blue arrows in Figure \ref{fig29}, we conclude that $\begin{vmatrix}d_x & h_x \\ d_y & h_y\end{vmatrix}\cdot \begin{vmatrix}e_x & f_x \\ e_y & f_y\end{vmatrix}>0$ and $\begin{vmatrix}d'_x & e_x \\ d'_y & e_y\end{vmatrix}\cdot\begin{vmatrix}h_x & f_x \\ h_y & f_y\end{vmatrix}<0$. Thus $\overline{\mathcal{O}_3}$ and $\overline{\mathcal{O}'}$ are on different sides of $\sigma_d$.     
    
    Finally, $\mathscr{A}'$ is combinatorically equivalent to $\mathscr{A}_4$. We may rotate the sphere so that the vertex $5^+$ is at the center of our view, and then perturb the loops in $\mathscr{A}'$ to their positions in the third picture of Figure \ref{fig29}. In fact, this does not change the homotopy class of each loop. Although we have not found a way to demonstrate this easily without a 3D model, it is still helpful to compare the vertices on the same side of each loop in the second and the third pictures. Finally, the third picture is equivalent to $\mathscr{A}_4$ up to a rotation if we ignore all the labels. 
\end{proof}

\begin{lemma} \label{lem13}
    There is a Type 3 space whose closure is on the other side of $\sigma_e$ with respect to $\overline{\mathcal{O}_3}$.
\end{lemma}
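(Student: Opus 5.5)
The plan follows the template of Lemma \ref{lem10} and Lemma \ref{lem11}: first construct an adjacent loop arrangement $\mathscr{A}'$ that differs from $\mathscr{A}_3$ only by $l_e$; then check the sign condition of Lemma \ref{lem5} or Lemma \ref{lem6}; finally show that $\mathscr{A}'$ is combinatorically equivalent to $\mathscr{A}_3$.

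\textbf{Constructing $\mathscr{A}'$.} I would work in the equatorial picture of Figure \ref{fig25}. Here $l_e$ separates the labeled vertices into two antipodal groups. The natural move is to push the small cluster of labeled vertices on one side of $l_e$ (I expect $4^\pm$ and $5^\pm$, or by the reflection symmetry exchanging $l_e$ and $l_g$, the mirror cluster) across $l_e$ simultaneously. If necessary, I would also perturb a neighbouring loop such as $l_g$ or $l_f$ without changing its homotopy class, as was done for $l_g$ in Lemma \ref{lem10}. This changes the homotopy class of $l_e$ only. Hence $\sigma_e$ is a common face of $\overline{\mathcal{O}_3}$ and $\overline{\mathcal{O}'}$, where $\mathcal{O}'$ is the space arising from $\mathscr{A}'$.

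\textbf{Checking the sign condition.} I would use the local frame $(Z_{4^-2^+}, Z_{2^+1^+}, Z_{1^+5^+}, Z_{5^+4^+})$ from Lemma \ref{lem10}, extended to $\mathcal{O}'$ through the interior of $\sigma_e$. If needed, I would replace one component by an antipodal or summed version as in Lemma \ref{lem11}. In $\psi_3$ the loop $l_e$ appears in both $Z_{1^+5^+}=a(\cdot)+c(\cdot)+e(\cdot)$ and $Z_{5^+4^+}=f(\cdot)-e(\cdot)$.
\begin{itemize}
\item If, after crossing, $l_e$ is transverse to a segment of only one component, I would rewrite that component so that Lemma \ref{lem5} applies, with $i=e$ and $j$ the partner loop in that component.
\item Otherwise, $e$ plays the role of $i$ or $m$ in Lemma \ref{lem6}, and I would identify which loop ($l_c$ or $l_f$) pairs with it.
\end{itemize}
I would then orient the relevant loops by blue arrows in both arrangements. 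Reading off the rotation directions, I expect the determinant product to flip sign, exactly as in Figure \ref{fig19} or Figure \ref{fig20}.

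\textbf{Combinatorial equivalence.} I would push all labeled vertices of $\mathscr{A}'$ to the equator and redraw each loop by recording which vertices lie on each side of it. I would then compare the result with the last two pictures of Figure \ref{fig25}, hoping to see agreement up to a rotation or reflection once labels are ignored.

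\textbf{Main obstacle.} I expect two difficulties. The first is choosing the crossing move so that $\mathscr{A}'$ lands in Type 3 rather than Type 1 or 4. The second is finding a frame in which only one column of the matrix changes, so that the clean determinant comparison of Lemma \ref{lem5} or Lemma \ref{lem6} applies. If the first candidate move yields another type, I would try moving the complementary cluster across $l_e$ instead.
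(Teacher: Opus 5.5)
Your outline follows the paper's proof step for step, but it stops where the proof actually begins. Lemma~\ref{lem13} is an existence claim, and its entire content is three concrete facts:
a specific $\mathscr{A}'$ differing from $\mathscr{A}_3$ only in $l_e$;
a sign flip showing the two closures lie on opposite sides of $\sigma_e$;
and the combinatorial equivalence of $\mathscr{A}'$ with $\mathscr{A}_3$.
You leave all three as expectations (``I expect'', ``hoping to see agreement''), and you name the choice of move as the unresolved main obstacle. This matters. Adjacent arrangements can lie on the same side of a face (second picture of Figure~\ref{fig19}), and a move of $l_e$ could a priori land in Type 1, 2 or 4.

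The paper settles this by exhibiting $\mathscr{A}'$ in Figure~\ref{fig30}. It keeps the frame of Lemma~\ref{lem10} and applies Lemma~\ref{lem5} to the single component $Z_{5^+4^+}$ with $i=e$, $j=f$. From the orientations it reads off that the $2\times 2$ determinant of the $e$- and $f$-directions is positive for $\mathscr{A}_3$ and negative for $\mathscr{A}'$. It then obtains Type 3 by rotating the picture of $\mathscr{A}'$ by $\pi$.

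Several of your hedges also come from misreading the two technical lemmas.

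Lemma~\ref{lem5} does not need $l_e$ to cross only one frame component. It needs one component to be a combination of $e$ and a single other loop, both before and after the move. If you order that component's rows and the $e$, $f$ columns first, $M$ becomes block lower triangular. The $e$-entries in the $Z_{1^+5^+}$ rows then sit below the $2\times 2$ block and play no role. You already have $Z_{5^+4^+}=f(\cdot)-e(\cdot)$, so $j=f$ works with no rewriting. What remains is to check that $Z_{5^+4^+}$ still crosses only $l_e$ and $l_f$ in your $\mathscr{A}'$.

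In Lemma~\ref{lem6}, the loop of the crossed face must be $l_i$, because only its column may differ between $M$ and $M'$. So $e$ can never play the role of $m$.

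Finally, your fallback is not a second candidate. A great circle separates every antipodal pair, so its side pattern is a sign vector in $\{\pm\}^5$ up to a global sign. Pushing $4^\pm,5^\pm$ across $l_e$ and pushing the complementary $1^\pm,2^\pm,3^\pm$ across it give the same pattern. Likewise, the reflection exchanging $l_e$ and $l_g$ turns a move of $l_e$ into a move of $l_g$, so it offers no alternative move of $l_e$.
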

\begin{proof}
    Consider the loop arrangement $\mathscr{A}'$ obtained by moving the loops in $\mathscr{A}_3$ to their positions in the second picture of Figure \ref{fig30}. Note that this does not change the homotopy classes of any loop except for $l_e$. Let $\mathcal{O}'$ be the space arising from $\mathscr{A}'$ and $\overline{\mathcal{O}'}$ be its closure. Then $\sigma_e$ is a common face of $\overline{\mathcal{O}_3}$ and $\overline{\mathcal{O}'}$.     

    \begin{figure}[h]
    \centering
    \includegraphics[width=0.95\linewidth]{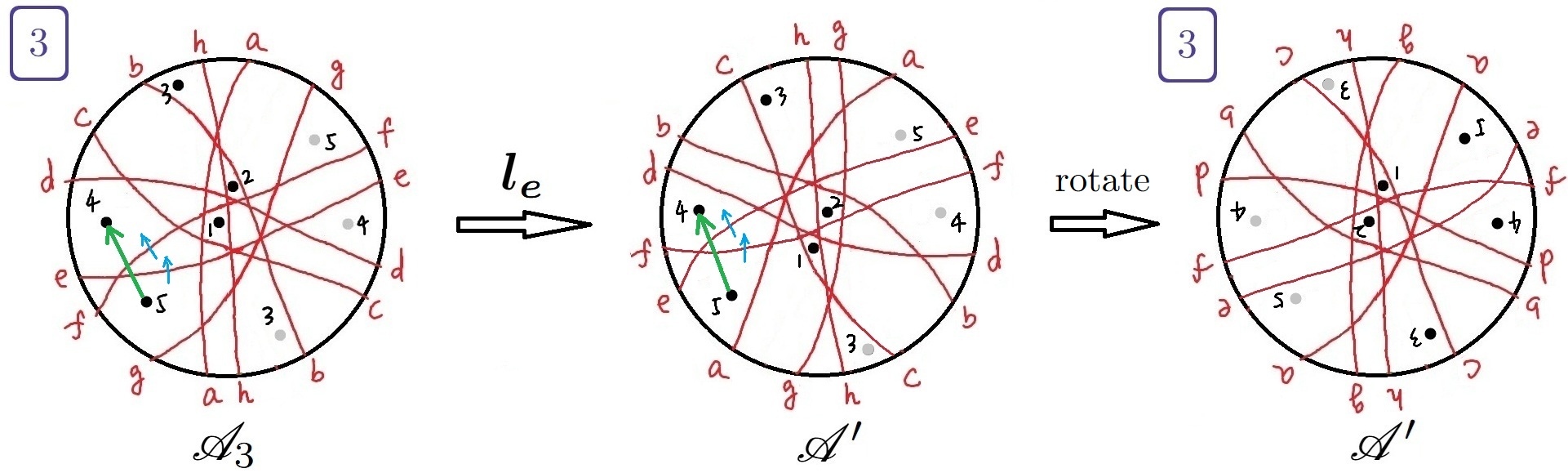}
    \caption{Visualizing $Z_{5^+4^+}$ with $\mathscr{A}_3$ and $\mathscr{A}'$, which is combinatorically equivalent to $\mathscr{A}_3$.}
    \label{fig30}
    \end{figure}

    Consider the local frame in Lemma \ref{lem10} on $\mathcal{O}_3$, and extend it to $\mathcal{O}'$ through the interior of $\sigma_e$. In Figure \ref{fig30}, we visualize the component $Z_{5^+4^+}$ of this local frame with $\mathscr{A}_3$ and $\mathscr{A}'$, and orient the loops $l_e$ and $l_g$ according to the blue arrows. Let $\psi_3$ and $\psi'$ be the maps from $[0,\infty)^8$ to this local frame via surfaces in $\mathscr{A}_3$ and $\mathscr{A}'$, respectively. Then there exist unit vectors $\begin{pmatrix} e_x\\ e_y \end{pmatrix}$, $\begin{pmatrix} e'_x\\ e'_y \end{pmatrix}$, $\begin{pmatrix} f_x\\ f_y \end{pmatrix}$ such that  $Z_{5^+4^+}=e\begin{pmatrix} e_x\\ e_y \end{pmatrix}+f\begin{pmatrix} f_x\\ f_y \end{pmatrix}$ in $\psi_3$ and $Z_{5^+4^+}=e\begin{pmatrix} e'_x\\ e'_y \end{pmatrix}+f\begin{pmatrix} f_x\\ f_y \end{pmatrix}$ in $\psi'$. 
    
    Thus, we can apply Lemma \ref{lem5} by taking $T=4^+$, $U=5^+$, $i=e$ and $j=f$. Based on the directions of the blue arrows in Figure \ref{fig30}, we can see that $\begin{vmatrix}e_x & f_x \\ e_y & f_y\end{vmatrix}>0$ and $\begin{vmatrix}e'_x & f_x \\ e'_y & f_y\end{vmatrix}<0$. Thus, $\overline{\mathcal{O}_3}$ and $\overline{\mathcal{O}'}$ are on different sides of $\sigma_e$.

    Finally, $\mathscr{A}'$ is combinatorically equivalent to $\mathscr{A}_3$. To see this, we rotate the second picture of $\mathscr{A}'$ by $\pi$ to obtain the third picture in Figure \ref{fig30}, which is equivalent to $\mathscr{A}_3$ if we ignore the labels. 
\end{proof}

We have thus finished the discussion with Type $3$ spaces.

\subsection*{Type 4 Spaces}

In the picture of $\mathscr{A}_4$ in Figure \ref{fig17}, we observe that there are essentially two symmetry classes of loops: 1) $l_a$, $l_b$, $l_e$ and $l_f$; 2) $l_c$, $l_d$, $l_g$ and $l_h$. In Lemma \ref{lem9}, we have shown that there exists a Type 2 space whose closure is on the other side of $\sigma_e$ with respect to  $\overline{\mathcal{O}_4}$. In Lemma \ref{lem12}, we have shown that there exists a Type 3 space whose closure is on the other side of $\sigma_d$ with respect to $\overline{\mathcal{O}_4}$. This completes the discussion with Type 4 spaces.

Let us summarize the results from the lemmas and the corollaries in this section so far:

\begin{lemma} \label{lem14}
    Let ${\mathcal{O}}$ be a space of Type 1, 2, 3 or 4 and $\overline{\mathcal{O}}$ be its closure. Then ${\mathcal{O}}$ is open in $\mathcal{C}_{10}(\delta_1, \delta_2, \delta_3, \delta_4, \delta_5)$. In addition, for any face $\sigma$ of $\overline{\mathcal{O}}$, there exists a space of Type 1, 2, 3 or 4 whose closure is on the other side of $\sigma$ with respect to $\overline{\mathcal{O}}$.
\end{lemma}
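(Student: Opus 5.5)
Lemma \ref{lem14} is a summary of the preceding results, so the plan is to assemble them. We split into the openness claim and the adjacency claim, and within each into the four types. Throughout we use that any space of Type $k$ is the image of $\mathcal{O}_k$ under a relabeling of vertices and loops. Such a relabeling is an isomorphism of $\mathcal{C}_{10}(\delta_1,\dots,\delta_5)$ after the corresponding permutation of cone-deficits; it carries open sets to open sets and preserves the relation ``on different sides of $\sigma_i$''. Hence it suffices to treat $\mathcal{O}_1,\mathcal{O}_2,\mathcal{O}_3,\mathcal{O}_4$.

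For openness we proceed as follows.
\begin{itemize}
\item Type 1 is Lemma \ref{lem4}.
\item Type 2 is Corollary \ref{cor3}.
\item Type 3 is Corollary \ref{cor4}.
\item Type 4 is Corollary \ref{cor5}.
\end{itemize}
Corollaries \ref{cor4} and \ref{cor5} are stated for ``the interior'' of the space, but $\mathcal{O}_k$ is by definition the image of the open orthant $(0,\infty)^8$. So it coincides with that interior, and $F_k$ is a continuous injection from $(0,\infty)^8$ into an $8$-manifold. The invariance of domain argument of Lemma \ref{lem4} then applies verbatim, with injectivity coming from $\det M_k\neq 0$. This is inherited along the chain $M_1\to M_2\to M_3$ and $M_2\to M_4$, because adjacent matrices have determinants of opposite sign by Lemmas \ref{lem7}, \ref{lem8} and \ref{lem9}.

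For the adjacency claim we go type by type, reducing to one representative face per symmetry class of loops.
\begin{itemize}
\item \textbf{Type 1.} The symmetry of $\mathscr{A}_1$ acts transitively on the loops, so only $\sigma_b$ needs checking, and Lemma \ref{lem7} supplies $\overline{\mathcal{O}_2}$.
\item \textbf{Type 2.} By the two reflections of Figure \ref{fig22}, the classes are $\{l_a,l_b\}$, $\{l_e,l_f\}$ and $\{l_c,l_d,l_g,l_h\}$. These are handled by Lemma \ref{lem7} (Type 1 across $\sigma_b$), Lemma \ref{lem9} (Type 4 across $\sigma_e$) and Lemma \ref{lem8} (Type 3 across $\sigma_h$).
\item \textbf{Type 3.} The classes from Figure \ref{fig25} are $\{l_a\}$, $\{l_b,l_f\}$, $\{l_c,l_h\}$, $\{l_d\}$ and $\{l_e,l_g\}$. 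They are handled by Lemma \ref{lem10}, Lemma \ref{lem11}, Lemma \ref{lem8} (Type 2 across $\sigma_h$, read symmetrically), Lemma \ref{lem12} and Lemma \ref{lem13}.
\item \textbf{Type 4.} The classes are $\{l_a,l_b,l_e,l_f\}$ and $\{l_c,l_d,l_g,l_h\}$. They are handled by Lemma \ref{lem9} and Lemma \ref{lem12}, each read from the $\mathcal{O}_4$ side.
\end{itemize}

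The one point needing care, and the main obstacle, is that ``on different sides'' is symmetric. Lemmas \ref{lem8}, \ref{lem9} and \ref{lem12} are phrased from the other space's viewpoint, and Lemma \ref{lem12} produces only a space combinatorially equivalent to $\mathscr{A}_4$, not $\mathscr{A}_4$ itself.

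Symmetry holds because the criterion in Lemmas \ref{lem5} and \ref{lem6} is that $\det M$ and $\det M'$ have opposite signs, which is symmetric in the two spaces. Equivalently, by definition, both statements say $\mathcal{O}\cap\mathcal{O}'$ contains no surface near $\sigma_i$. For Lemma \ref{lem12}, we transport the statement by the relabeling $\phi$ taking $\mathscr{A}'$ to $\mathscr{A}_4$. This shows that $\overline{\mathcal{O}_4}$ has the Type 3 space $\phi(\overline{\mathcal{O}_3})$ across $\sigma_{\phi(d)}$. Here $\phi(d)$ lies in the class $\{c,d,g,h\}$, since the relabeling is the identity on the loop labels it fixes and the class is determined by the combinatorics of Figure \ref{fig29}. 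Likewise, Lemma \ref{lem9} covers the class of $l_e$ in $\mathscr{A}_4$. The symmetry classes of $\mathscr{A}_4$ then finish the proof.
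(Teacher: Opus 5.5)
Your proposal is correct and follows the paper's own route. The paper gives Lemma \ref{lem14} purely as a summary: openness comes from Lemma \ref{lem4} and Corollaries \ref{cor3}--\ref{cor5}, with determinant signs propagated along $M_1\to M_2\to M_3$ and $M_2\to M_4$, and the face claim comes from the symmetry-class reductions together with Lemmas \ref{lem7}--\ref{lem13}, exactly as you assemble them. Your remarks on the symmetry of ``on different sides'' and on transporting Lemma \ref{lem12} through the relabeling $\mathscr{A}'\to\mathscr{A}_4$ spell out what the paper leaves implicit, and you rightly cite Lemma \ref{lem8} rather than the paper's Lemma \ref{lem9} for the face $\sigma_h$ of Type 3; however, the claim that the relabeling sends $l_d$ into $\{l_c,l_d,l_g,l_h\}$ is still only read off Figure \ref{fig29}, just as in the paper.
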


Analogous to Section 4, we consider the moduli space $\Delta$ of $\mathcal{O}$, the space of surfaces in $\mathcal{O}$ with area $1$. Let $\overline{\Delta}$ be the completion (with respect to the real hyperbolic metric) of $\Delta$. When $\mathcal{O}$ has Type 1, 2, 3 or 4, a similar argument to Lemma \ref{lem3} shows that $\overline{\Delta}$ is a real hyperbolic ideal $7$-simplex.

Consider the moduli space $\mathcal{M}_{10}(\delta_1, \delta_2, \delta_3, \delta_4, \delta_5)$ and its metric completion $\overline{\mathcal{M}_{10}}(\delta_1, \delta_2, \delta_3, \delta_4, \delta_5)$. Then Lemma \ref{lem14} implies the following result analogous to Corollary \ref{cor1}:

\begin{cor} \label{cor6}
    If $\mathcal{O}$ has Type 1, 2, 3 or 4, then every element in $\Delta$ or in the interior of a face of $\overline{\Delta}$ has an open neighborhood in $\mathcal{M}_{10}(\delta_1, \delta_2, \delta_3, \delta_4, \delta_5)$.
\end{cor}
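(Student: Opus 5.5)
The argument will mirror the proof of Corollary \ref{cor1}, with Lemma \ref{lem14} playing the role that Lemma \ref{lem1} and Lemma \ref{lem2} played in Section 4. Let $x$ be an element of $\overline{\Delta}$, where $\Delta$ is the moduli space arising from a loop arrangement $\mathscr{A}$ whose space $\mathcal{O}$ has Type 1, 2, 3 or 4. I will split into two cases: $x\in\Delta$, and $x$ in the interior of a face $\sigma_i(1)$ of $\overline{\Delta}$.

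\emph{Case $x\in\Delta$.} By Lemma \ref{lem14}, $\mathcal{O}$ is open in $\mathcal{C}_{10}(\delta_1,\dots,\delta_5)$. Area is a continuous (quadratic) function in the local coordinates, and normalizing to area $1$ is compatible with passing to the moduli space. Hence $\Delta$, being the intersection of $\mathcal{O}$ with the area-one slice, is open in $\mathcal{M}_{10}(\delta_1,\dots,\delta_5)$. So $\Delta$ itself is the required neighborhood of $x$.

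\emph{Case $x$ in the interior of $\sigma_i(1)$.} First, $x$ is a genuine surface in $\mathcal{M}_{10}(\delta_1,\dots,\delta_5)$: only the $i$-coordinate vanishes, so $x$ lies off the codimension-two boundary. Consequently the local frame on $\mathcal{O}$ is still defined at $x$. By Lemma \ref{lem14}, there is a space $\mathcal{O}'$ of Type 1, 2, 3 or 4, with moduli space $\Delta'$, such that $\overline{\mathcal{O}'}$ lies on the other side of $\sigma_i$ from $\overline{\mathcal{O}}$. Then $\overline{\Delta}$ and $\overline{\Delta'}$ lie on different sides of $\sigma_i(1)$.

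I will pull both spaces back into the common local frame extended through the interior of $\sigma_i$. There, $\psi$ and $\psi'$ are linear maps with invertible matrices $M$ and $M'$, which differ only in the column corresponding to the coordinate $i$. So the images of small neighborhoods of the coordinate preimage of $x$ under $\psi$ and $\psi'$ are two half-balls sharing the hyperplane image of $\sigma_i$. Because $\det M$ and $\det M'$ have opposite signs, Cramer's rule shows that the $i$-coordinates computed via $M$ and via $M'$ have opposite signs at any point off that hyperplane. Therefore the two half-balls lie on opposite sides of the hyperplane, and their union is a full open neighborhood in $\mathbb{R}^8$.

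Intersecting this union with the area-one slice and using the hyperbolic structure of Theorem \ref{thm2}, I will take $\epsilon$ small. For such $\epsilon$ the $\epsilon$-neighborhoods of $x$ in $\overline{\Delta}$ and $\overline{\Delta'}$ are open half-balls meeting only along $\sigma_i(1)$. Their union is an open ball in $\mathcal{M}_{10}(\delta_1,\dots,\delta_5)$ containing $x$.

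The main obstacle is justifying that the neighborhood of $x$ in $\overline{\Delta}$ is really a half-ball, and that the two half-balls glue to a ball rather than overlap or leave a gap. Both facts rest on the face $\sigma_i$ being a totally geodesic hyperplane in the common frame, as in the proof of Lemma \ref{lem3}, together with the opposite-sign determinant property packaged in Lemma \ref{lem14}. Once these are in place, the rest is identical in principle to Corollary \ref{cor1}.
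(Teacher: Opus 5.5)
Your proposal is correct and takes the same route as the paper. The paper simply asserts that Corollary~\ref{cor6} follows from Lemma~\ref{lem14} exactly as Corollary~\ref{cor1} followed from Lemmas~\ref{lem1} and~\ref{lem2}: openness handles points of $\Delta$, and for a point in a face, the neighbouring simplex on the other side of that face supplies a second half-ball that glues to the first to give a ball. Your Cramer's-rule step shows the two half-balls are complementary, not merely non-overlapping, which makes explicit what the paper leaves implicit in the proofs of Lemmas~\ref{lem2}, \ref{lem5} and~\ref{lem6}.
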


Finally, we are ready to prove the following result, which is an analog of Theorem \ref{thm4}:

\begin{theorem} \label{thm5}
    Every polyhedral surface in $\overline{\mathcal{M}_{10}}(\delta_1, \delta_2, \delta_3, \delta_4, \delta_5)$ can be decomposed into at most $2\binom{8}{2}=56$ parallelograms, and the decomposition is invariant under the antipodal map.
\end{theorem}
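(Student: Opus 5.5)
The plan is to repeat the gluing argument that proved Theorem \ref{thm4}, with the single-type input (Lemma \ref{lem2} and Corollary \ref{cor1}) replaced by the four-type input of Lemma \ref{lem14} and Corollary \ref{cor6}. First I would record that, for a space $\mathcal{O}$ of Type 1, 2, 3 or 4, the metric completion $\overline{\Delta}$ of its area-one slice is a real hyperbolic ideal $7$-simplex, as stated after Lemma \ref{lem14}. Its facets are the faces $\sigma_i(1)$, $i\in\{a,\dots,h\}$. Up to relabeling vertices and loops there are only finitely many loop arrangements of these four types, so only finitely many distinct simplices $\overline{\Delta}\subseteq\overline{\mathcal{M}_{10}}(\delta_1,\dots,\delta_5)$ occur.

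Next I would build an abstract space $\overline{X}$ inductively. Start with a copy of $\overline{\Delta_1}$. Whenever a boundary facet $\sigma_i(1)$ of some simplex $\overline{\Delta}$ in $\overline{X}$ is still free, Lemma \ref{lem14} supplies a space $\mathcal{O}'$ of Type 1--4 whose closure lies on the other side of $\sigma_i$. Glue a fresh copy of the corresponding $\overline{\Delta'}$ along $\sigma_i(1)$, and continue until every facet is shared by exactly two simplices, one on each side. Let $X_2$ be the union of the codimension-two boundaries. Then $\overline{X}\setminus X_2$ is a $7$-manifold, and the inclusions of the individual simplices assemble into a continuous map $\iota:\overline{X}\to\overline{\mathcal{M}_{10}}(\delta_1,\dots,\delta_5)$. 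Because only finitely many distinct simplices appear, $\iota(\overline{X})$ is a finite union of closed sets and hence closed. Corollary \ref{cor6} gives each point of $\iota(\overline{X})\setminus\iota(X_2)$ an open neighborhood that lies inside the image: interior points via openness, and facet points via the two half-balls that meet only along the facet. So $\iota(\overline{X})\setminus\iota(X_2)$ is open as well as closed in $\overline{\mathcal{M}_{10}}\setminus\iota(X_2)$.

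The main obstacle is this clopen step, which needs connectedness of $\overline{\mathcal{M}_{10}}(\delta_1,\dots,\delta_5)\setminus\iota(X_2)$. Section 4 glosses over this point. I would argue it as follows. $\mathcal{M}_{10}$ is a connected $7$-manifold, by Theorem \ref{thm2} together with connectedness of the space of centrally symmetric convex polyhedra. The set $\iota(X_2)$ is a finite union of codimension-two pieces of totally geodesic subspaces, and removing a codimension-two set does not disconnect a connected manifold. Points of the completion that are not in $\mathcal{M}_{10}$ are limits of points of $\mathcal{M}_{10}$, so they lie in the closure of a connected set. With connectedness in hand, $\iota(\overline{X})\supseteq\overline{\mathcal{M}_{10}}\setminus\iota(X_2)$.

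To finish, take any surface. If it lies in $\iota(X_2)$, it is in the closure of some space arising from a loop arrangement, with some coordinates zero; collapsing those edges still gives an antipodally invariant parallelogram decomposition. Otherwise it lies in some $\overline{\Delta}$ and so arises directly from the dual graph $\mathscr{D}^*$ of a loop arrangement. In either case the decomposition is antipodally invariant, because each loop arrangement is antipodally symmetric. For the count, there are $8$ great-circle loops and every pair meets in exactly $2$ antipodal points. The dual graph therefore has at most $2\binom{8}{2}=56$ quadrilateral faces, and degenerate faces only reduce this number.
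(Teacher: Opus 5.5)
Your proposal is correct and follows the paper's proof essentially step for step. You glue fresh copies of the Type 1--4 ideal $7$-simplices across facets via Lemma~\ref{lem14}, get openness away from the codimension-two set from Corollary~\ref{cor6}, get closedness from the finiteness of the arrangements involved, and get the count $2\binom{8}{2}=56$ from the pairwise intersections of the $8$ great circles. Your explicit connectedness argument for $\overline{\mathcal{M}_{10}}(\delta_1,\dots,\delta_5)\setminus\iota(X_2)$ fills a step the paper leaves implicit (its Section 4 argument cites only non-emptiness), but connectedness of $\mathcal{M}_{10}(\delta_1,\dots,\delta_5)$ itself does not follow from Theorem~\ref{thm2} and must be imported, e.g.\ from Troyanov's uniformization of flat cone metrics, which identifies this space with a quotient of the connected space of $5$-tuples of points on $S^2$ that are pairwise neither equal nor antipodal.
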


\begin{proof}
    The idea of proof is essentially the same as that of Theorem \ref{thm4}. We construct infinitely many copies of the moduli spaces of Type 1, 2, 3 or 4. Then we glue their metric completions (which are $7$-simplices) to form a space $\overline{X}$ such that every face belongs to exactly two $7$-simplices, one on each side. By Lemma \ref{lem14}, such $\overline{X}$ exists. Then we define the map $\iota$ analogously. Away from the codimension-two boundaries of $\overline{X}$, the openness of $\iota$ is guaranteed by Corollary \ref{cor6}. Finally, there are $8$ loops in every loop arrangement, each two of which intersect exactly twice as great circles, hence the number $2\binom{8}{2}$.
\end{proof}

\section{Discussions and Future Work}

Our proof of Theorem \ref{thm5} is based on Corollary \ref{cor6}. In fact, this method of proof has the potential to be applied in higher dimensions as long as we can generalize Lemma \ref{lem14} (and thus Corollary \ref{cor6}) to higher dimensions. To achieve this, we will need to construct different types of loop arrangements. In addition, for each loop arrangement $\mathscr{A}$ and any loop $l_i$ in $\mathscr{A}$, we need to move $l_i$ to an appropriate position to get an adjacent loop arrangement $\mathscr{A}'$ such that the closures of the spaces arising from $\mathscr{A}$ and $\mathscr{A}'$ are on different sides of the face $\sigma_i$. The difficulty is that we cannot always find a position to move $l_i$ by induction, even though we may remove another two loops $l_j$ and $l_k$ from $\mathscr{A}$ to obtain a loop arrangement of a lower dimensional space. This is because when we move $l_i$ by the induction hypothesis, we may move some labeled vertices at the same time, making it impossible to add $l_j$ and $l_k$ back so that they remain great circles in their original homotopy classes. Without induction, it would be tedious to enumerate and check all types of loop arrangements as we can expect from the work in Section 5.

Nevertheless, we believe that Theorem \ref{thm5} can be generalized to higher dimensions. We summarize it as the conjecture below to work on in the future:

\begin{conjecture}
    Let $\delta_1$, $\delta_2$, $\dots$, $\delta_N$ be $N$ positive numbers that sum up to $2\pi$. Then every surface in $\overline{\mathcal{M}_{2N}}(\delta_1, \delta_2, \dots, \delta_N)$ can be decomposed into at most $2\binom{2N-2}{2}$ parallelograms, and the decomposition is invariant under the antipodal map.
\end{conjecture}

\section*{Acknowledgments}

We are very grateful to Professor Richard Schwartz and Professor Peter Doyle for their very helpful comments and support.

\end{document}